\documentclass{amsart}
\usepackage{amsmath}
\usepackage{amssymb}
\usepackage{amsthm}
\usepackage{epsfig}
\usepackage{amsfonts}
\usepackage{amscd}
\usepackage{mathrsfs}
\usepackage{enumerate}
\usepackage{latexsym}
\usepackage{url}

\newtheorem{theorem}{Theorem}[section]
\newtheorem{lemma}[theorem]{Lemma}
\newtheorem{proposition}[theorem]{Proposition}
\newtheorem{question}[theorem]{Question}

\theoremstyle{definition}
\newtheorem{definition}[theorem]{Definition}

\newtheorem{example}[theorem]{Example}

\theoremstyle{remark}
\newtheorem{remark}[theorem]{Remark}
\numberwithin{equation}{section}

\begin{document}

\title[Connectedness modulo an ideal]{Connectedness modulo an ideal}

\author{M.R. Koushesh}

\address{Department of Mathematical Sciences, Isfahan University of Technology, Isfahan 84156--83111, Iran}

\address{School of Mathematics, Institute for Research in Fundamental Sciences (IPM), P.O. Box: 19395--5746, Tehran, Iran}

\email{koushesh@cc.iut.ac.ir}

\thanks{This research was in part supported by a grant from IPM (No. 93030418).}

\subjclass[2010]{Primary 54D05, 03E15; Secondary 54D35, 54C10, 54D20, 54D40, 54D60}

\keywords{Stone--\v{C}ech compactification, connectedness, set ideal, Hewitt realcompactification, hyper-real mapping, connectedness modulo an ideal.}

\begin{abstract}
For a topological space $X$ and an ideal $\mathscr{H}$ of subsets of $X$ we introduce the notion of connectedness modulo $\mathscr{H}$. This notion of connectedness naturally generalizes the notion of connectedness in its usual sense. In the case when $X$ is completely regular, we introduce a subspace $\gamma_{\mathscr H} X$ of the Stone--\v{C}ech compactification $\beta X$ of $X$, such that connectedness modulo ${\mathscr H}$ is equivalent to connectedness of $\beta X\setminus\gamma_{\mathscr H} X$. In particular, we prove that when ${\mathscr H}$ is the ideal generated by the collection of all open subspaces of $X$ with pseudocompact closure, then $X$ is connected modulo ${\mathscr H}$ if and only if $\mathrm{cl}_{\beta X}(\beta X\setminus\upsilon X)$ is connected, and when $X$ is normal and ${\mathscr H}$ is the ideal generated by the collection of all closed realcompact subspaces of $X$, then $X$ is connected modulo ${\mathscr H}$ if and only if $\mathrm{cl}_{\beta X}(\upsilon X\setminus X)$ is connected. Here $\upsilon X$ is the Hewitt realcompactification of $X$.
\end{abstract}

\maketitle

\section{Introduction}

Throughout this article by \textit{completely regular} we mean completely regular and Hausdorff (that is, Tychonoff).

An \textit{ideal} $\mathscr{H}$ in a set $X$ is a non-empty collection of subsets of $X$ such that
\begin{itemize}
  \item if a set $A$ is contained in an element of $\mathscr{H}$ then $A$ is in $\mathscr{H}$,
  \item if $G$ and $H$ are in $\mathscr{H}$ then so is their union $G\cup H$.
\end{itemize}
Intuitively, an ideal is a collection of subsets that are considered to be ``small''. Let ${\mathscr A}$ be a collection of subsets of a set $X$. The \textit{ideal in $X$ generated by ${\mathscr A}$}, denoted by $\langle{\mathscr A}\rangle$, is the intersection of all ideals in $X$ which contain ${\mathscr A}$. It is easy to check that $\langle{\mathscr A}\rangle$ is the collection of all subsets of finite unions of elements from ${\mathscr A}$.

The purpose of this article is to present a natural generalization of the notion of connectedness in topological spaces. Recall that a space $X$ is said to be \textit{connected} if there is no continuous $2$-valued mapping on $X$, that is, there is no continuous mapping $f:X\rightarrow[0,1]$ such that \begin{itemize}
  \item $f^{-1}(0)$ and $f^{-1}(1)$ are neither empty,
  \item $X\setminus(f^{-1}(0)\cup f^{-1}(1))$ is empty.
\end{itemize}
Now, to generalize, we replace ``emptyness'' in the above definition by ``smallness'', that is, ``being an element of an ideal in $X$''. More precisely, for a space $X$ and an ideal $\mathscr{H}$ in $X$, we say that \textit{$X$ is connected modulo ${\mathscr H}$} if there is no continuous mapping $f:X\rightarrow[0,1]$ such that
\begin{itemize}
  \item $f^{-1}(0)$ and $f^{-1}(1)$ are neither in $\mathscr{H}$,
  \item $X\setminus(f^{-1}(0)\cup f^{-1}(1))$ is in $\mathscr{H}$.
\end{itemize}
From this simple definition much follows. Indeed, most standard facts about connectedness have counterparts in this context. Note that for the trivial ideal $\{\emptyset\}$, connectedness modulo an ideal coincides with connectedness in its usual sense.

This article is organized as follows. There are two main sections. In Section \ref{OHJ} we study connectedness modulo an ideal ${\mathscr H}$ with no particular restriction on ${\mathscr H}$; in Section \ref{HGGGF} we deal with particular examples of ${\mathscr H}$. In Section \ref{OHJ} we generalize the standard known facts about connectedness to our new setting. This include theorems on preservation of connectedness under continuous mappings, formation of unions and taking closures. Further, for a completely regular space $X$ and an ideal ${\mathscr H}$ of subsets of $X$, we introduce a subspace $\lambda_{\mathscr H} X$ of the Stone--\v{C}ech compactification $\beta X$ of $X$, such that connectedness modulo the ideal ${\mathscr H}$ of $X$ is equivalent to connectedness of $\beta X\setminus\lambda_{\mathscr H} X$. In Section \ref{HGGGF} we consider particular examples of the ideal ${\mathscr H}$. In particular, we show that if $X$ is completely regular and ${\mathscr H}$ is the ideal generated by the collection of all open subspaces of $X$ whose closures are pseudocompact, then $X$ is connected modulo ${\mathscr H}$ if and only if $\mathrm{cl}_{\beta X}(\beta X\setminus\upsilon X)$ is connected, and if $X$ is normal and ${\mathscr H}$ is the ideal generated by the collection of all closed realcompact subspaces of $X$, then $X$ is connected modulo ${\mathscr H}$ if and only if $\mathrm{cl}_{\beta X}(\upsilon X\setminus X)$ is connected. Here $\upsilon X$ denotes the Hewitt realcompactification of $X$.

We now review some notation and certain known facts. For undefined terms and notation we refer the reader to the standard texts \cite{E}, \cite{GJ} and \cite{PW}.

Let $X$ be a space. A \textit{zero-set in $X$} (\textit{cozero-set in $X$}, respectively) is a set of the form $f^{-1}(0)$ ($X\setminus f^{-1}(0)$, respectively) where $f:X\rightarrow [0,1]$ is a continuous mapping. For a continuous mapping $f:X\rightarrow [0,1]$, the \textit{zero-set of $f$} (\textit{cozero-set of $f$}, respectively) is $f^{-1}(0)$ ($X\setminus f^{-1}(0)$, respectively). The set of all zero-sets of $X$ (cozero-sets of $X$, respectively) is denoted by $\mathrm{Z}(X)$ ($\mathrm{Coz}(X)$, respectively).

\subsubsection*{\textbf{The Stone--\v{C}ech compactification.}} Let $X$ be a completely regular space. By a \textit{compactification of $X$} we mean a compact Hausdorff space which contains $X$ as a dense subspace. The \textit{Stone--\v{C}ech compactification of $X$}, denoted by $\beta X$, is the compactification of $X$ which is characterized among all compactifications of $X$ by the fact that every continuous mapping $f:X\rightarrow K$, where  $K$ is a compact Hausdorff space (or $[0,1]$), is extendable to a (unique) continuous mapping over $\beta X$; we denote this continuous extension of $f$ by $f_\beta$. The Stone--\v{C}ech compactification of a completely regular space always exists.

\subsubsection*{\textbf{The Hewitt realcompactification.}} A space is called \textit{realcompact} if it is homeomorphic to a closed subspace of some product $\mathbb{R}^\alpha$. Let $X$ be a completely regular space. A \textit{realcompactification of $X$} is a realcompact space which contains $X$ as a dense subspace. The \textit{Hewitt realcompactification of $X$}, denoted by $\upsilon X$, is the realcompactification of $X$ which is characterized among all realcompactifications of $X$ by the fact that every continuous mapping $f:X\rightarrow R$ from $X$ to a realcompact space $R$ (or $\mathbb{R}$) is continuously extendable over $\upsilon X$. The Hewitt realcompactification of a completely regular space always exists. We may assume that $\upsilon X$ is a subspace of $\beta X$. Observe that $\upsilon X=X$ if and only if $X$ is realcompact.

\section{General results}\label{OHJ}

In this section we study connectedness modulo an ideal with no particular restriction on the ideal. Examples in which the ideal is specified are considered in the follow-up section.

\subsection{$\mathscr{H}$-connectedness; the definition}\label{OJ}

In this subsection we provide basic definitions which we will refer to throughout this article.

\begin{definition}\label{RUA}
Let $X$ be a space and let $\mathscr{H}$ be an ideal in $X$. A mapping $f:X\rightarrow[0,1]$ is called \textit{$2$-valued modulo $\mathscr{H}$} if
\begin{itemize}
  \item $f^{-1}(0)$ and $f^{-1}(1)$ are neither in $\mathscr{H}$.
  \item $X\setminus(f^{-1}(0)\cup f^{-1}(1))$ is in $\mathscr{H}$.
\end{itemize}
The space $X$ is said to be \textit{connected modulo ${\mathscr H}$} (or \textit{${\mathscr H}$-connected}) if there is no continuous mapping $f:X\rightarrow[0,1]$ which is $2$-valued modulo $\mathscr{H}$.
\end{definition}

The following theorem is to provide a necessary and sufficient condition for a space to be connected modulo an ideal.

Recall that two subsets $C$ and $D$ of a space $X$ are said to be \textit{completely separated in $X$} if there is a continuous mapping $f:X\rightarrow[0,1]$ which is $0$ on $C$ and $1$ on $D$.

\begin{definition}\label{RUA}
Let $X$ be a space and let $\mathscr{H}$ be an ideal in $X$. A \textit{separation for $X$ modulo $\mathscr{H}$} is a pair $C$ and $D$ of completely separated subsets of $X$ such that
\begin{itemize}
  \item $C$ and $D$ are neither in $\mathscr{H}$.
  \item $X\setminus(C\cup D)$ is in $\mathscr{H}$.
\end{itemize}
\end{definition}

\begin{theorem}\label{KJHG}
Let $X$ be a space and let $\mathscr{H}$ be an ideal in $X$. The following are equivalent.
\begin{itemize}
\item[\rm(1)] $X$ is connected modulo $\mathscr{H}$.
\item[\rm(2)] There is no separation for $X$ modulo $\mathscr{H}$.
\end{itemize}
\end{theorem}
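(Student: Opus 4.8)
The plan is to show that the two negations are equivalent, i.e.\ that there exists a continuous mapping $f:X\rightarrow[0,1]$ which is $2$-valued modulo $\mathscr{H}$ if and only if there exists a separation for $X$ modulo $\mathscr{H}$. Both directions hinge on comparing the zero-sets $f^{-1}(0)$ and $f^{-1}(1)$ of a continuous mapping with the members of a separation, together with the fact that the ideal $\mathscr{H}$ is closed under passage to subsets.

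For the easy direction, I would start from a continuous $f:X\rightarrow[0,1]$ that is $2$-valued modulo $\mathscr{H}$ and set $C=f^{-1}(0)$ and $D=f^{-1}(1)$. The mapping $f$ itself witnesses that $C$ and $D$ are completely separated, being $0$ on $C$ and $1$ on $D$. Moreover, the two bullet conditions defining a $2$-valued mapping are literally the two bullet conditions defining a separation. Hence $C$ and $D$ form a separation for $X$ modulo $\mathscr{H}$.

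The converse is where the only (mild) subtlety lies. Given a separation $C$, $D$, complete separation furnishes a continuous $g:X\rightarrow[0,1]$ with $g\equiv0$ on $C$ and $g\equiv1$ on $D$, whence $C\subseteq g^{-1}(0)$ and $D\subseteq g^{-1}(1)$. One cannot in general expect equality here, so I would \emph{not} assert $g^{-1}(0)=C$; instead I would invoke that $\mathscr{H}$ is downward closed. Since $C\subseteq g^{-1}(0)$ and $C\notin\mathscr{H}$, the set $g^{-1}(0)$ cannot lie in $\mathscr{H}$ (otherwise $C$, being a subset of it, would too), and symmetrically $g^{-1}(1)\notin\mathscr{H}$. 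For the remaining condition, from $C\cup D\subseteq g^{-1}(0)\cup g^{-1}(1)$ one obtains
\[
X\setminus\big(g^{-1}(0)\cup g^{-1}(1)\big)\subseteq X\setminus(C\cup D),
\]
and as the right-hand side belongs to $\mathscr{H}$, so does the left-hand side by downward closure. Thus $g$ is $2$-valued modulo $\mathscr{H}$, which completes the equivalence.

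The main point to watch — and essentially the only obstacle — is the temptation to demand that $g^{-1}(0)$ and $g^{-1}(1)$ coincide with $C$ and $D$. The argument succeeds precisely because the ideal axioms allow containments to substitute for equalities: both the ``not in $\mathscr{H}$'' clauses and the ``in $\mathscr{H}$'' clause are robust under enlarging the distinguished sets and shrinking their complement, so no sharper control on the level sets of $g$ is needed.
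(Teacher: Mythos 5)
Your proof is correct and follows essentially the same route as the paper: both directions are handled by passing between a $2$-valued mapping and the pair of its level sets, with the witnessing function of complete separation used in the converse, and downward closure of $\mathscr{H}$ (containment rather than equality of $C,D$ with $g^{-1}(0),g^{-1}(1)$) doing exactly the work you describe. The paper's argument is the same, merely phrased contrapositively.
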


\begin{proof}
(1) \textit{implies} (2). Suppose that there is a separation $C$ and $D$ for $X$ modulo $\mathscr{H}$. Let $f:X\rightarrow[0,1]$ be continuous with
\[f|_C=\mathbf{0}\quad\text{and}\quad f|_D=\mathbf{1},\]
where $\mathbf{r}$ denotes the mapping assigning the real number $r$ to every element in its domain. Then $f^{-1}(0)$ is not in $\mathscr{H}$, as it contains $C$ and $C$ is not in $\mathscr{H}$. Similarly $f^{-1}(1)$ is not in $\mathscr{H}$. Also, $X\setminus(f^{-1}(0)\cup f^{-1}(1))$ is in $\mathscr{H}$, as it is contained in $X\setminus(C\cup D)$ and the latter is in $\mathscr{H}$. That is, $f$ is $2$-valued modulo $\mathscr{H}$. Thus $X$ is not connected modulo $\mathscr{H}$.

(2) \textit{implies} (1). Suppose that $X$ is not connected modulo $\mathscr{H}$. Let $f:X\rightarrow[0,1]$ be a continuous mapping which is $2$-valued modulo $\mathscr{H}$. Let
\[C=f^{-1}(0)\quad\text{and}\quad D=f^{-1}(1).\]
Then $C$ and $D$ is a pair of completely separated (by $f$) subsets of $X$ and clearly constitutes a separation for $X$ modulo $\mathscr{H}$.
\end{proof}

The following example shows that the concept ``connectedness modulo an ideal'' indeed generalizes ``connectedness'' in the usual sense.

\begin{example}\label{HGF}
Let $X$ be a space and let $\mathscr{H}$ be the ideal in $X$ consisting of the empty set $\emptyset$ alone. It is clear that in this case the notions ``a $2$-valued mapping modulo $\mathscr{H}$'' and ``a separation modulo $\mathscr{H}$'' coincide, respectively, with the notions ``a $2$-valued mapping'' and ``a separation'' in their usual senses. In particular, in this case the space $X$ is connected modulo $\mathscr{H}$ if and only if it is connected.
\end{example}

There may indeed exist ideals, other than the trivial ideal $\{\emptyset\}$ itself, for which the two notions ``connectedness modulo an ideal'' and ``connectedness'' coincide; this will be shown in the following two example.

\begin{example}\label{OFD}
Let $X$ be a $T_1$-space with no isolated points and let $\mathfrak{Fin}(X)$ be the ideal in $X$ consisting of all finite subsets of $X$. We verify that a continuous mapping $f:X\rightarrow[0,1]$ is $2$-valued modulo $\mathfrak{Fin}(X)$ if and only if it is $2$-valued in the usual sense. This, in particular, will show that connectedness modulo $\mathfrak{Fin}(X)$ coincides with connectedness. Let $f:X\rightarrow[0,1]$ be a continuous mapping which is $2$-valued modulo $\mathfrak{Fin}(X)$. That is, $f^{-1}(0)$ and $f^{-1}(1)$ are infinite while the remainder
\[U=X\setminus\big(f^{-1}(0)\cup f^{-1}(1)\big)=f^{-1}\big((0,1)\big)\]
is finite. But $U$ is open in $X$, and therefore it is infinite if it is non-empty, as $X$ is $T_1$ and it has no isolated points. Thus $U$ is empty. That is, $f$ is $2$-valued. Next, let $g:X\rightarrow[0,1]$ be a continuous $2$-valued mapping. Then $g^{-1}(0)$ is open in $X$ and it is non-empty, therefore, it is infinite. Similarly, $g^{-1}(1)$ is infinite. Thus $g$ is $2$-valued modulo $\mathfrak{Fin}(X)$.
\end{example}

\begin{example}\label{JKHJS}
Let $X$ be a space and let $\mathscr{H}$ be the collection of all nowhere dense subsets of $X$, that is, the collection of all subsets $H$ of $X$ such that $\mathrm{int}_X\mathrm{cl}_XH$ is empty. It is easy to check that $\mathscr{H}$ is an ideal in $X$. We verify that a continuous mapping $f:X\rightarrow[0,1]$ is $2$-valued modulo $\mathscr{H}$ if and only if it is $2$-valued in the usual sense. Let $f:X\rightarrow[0,1]$ be a continuous mapping which is $2$-valued modulo $\mathscr{H}$. Let
\[A=X\setminus\big(f^{-1}(0)\cup f^{-1}(1)\big).\]
Then $A$ is in $\mathscr{H}$. Note that $A$ is contained in $\mathrm{cl}_XA$ and thus in $\mathrm{int}_X\mathrm{cl}_XA$, since $A$ is open in $X$. But then $A$ is empty. Since neither of $f^{-1}(0)$ and $f^{-1}(1)$ is in $\mathscr{H}$, neither is empty in particular. That is $f$ is a $2$-valued mapping. That every $2$-valued continuous mapping is $2$-valued modulo $\mathscr{H}$ is clear.
\end{example}

As the above examples suggest, a space which is connected modulo an ideal may still remain connected as the ideal enlarges. The natural question which arises at this point is ``how big this enlargement can be?'' The following is to provide an answer to this question.

\begin{theorem}\label{HGFF}
Let $X$ be a space and let $\mathscr{H}$ be an ideal in $X$. Suppose that $X$ is connected modulo $\mathscr{H}$. Then, there is a maximal (with respect to the set-theoretic inclusion $\subseteq$) ideal $\mathscr{M}$ in $X$ which contains $\mathscr{H}$ and $X$ is connected modulo $\mathscr{M}$.
\end{theorem}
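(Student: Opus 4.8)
The plan is to apply Zorn's lemma to the collection of all ideals that contain $\mathscr{H}$ and modulo which $X$ remains connected. Let $\mathscr{P}$ denote the collection of all ideals $\mathscr{J}$ in $X$ with $\mathscr{H}\subseteq\mathscr{J}$ and such that $X$ is connected modulo $\mathscr{J}$, partially ordered by inclusion. The collection $\mathscr{P}$ is non-empty, since $\mathscr{H}$ itself belongs to it by hypothesis, and a maximal element of $\mathscr{P}$ is precisely the ideal $\mathscr{M}$ asserted in the statement. Thus it suffices to verify the hypothesis of Zorn's lemma, namely that every chain in $\mathscr{P}$ has an upper bound in $\mathscr{P}$.

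Given a chain $\mathscr{C}$ in $\mathscr{P}$ (which we may assume non-empty, as otherwise $\mathscr{H}$ serves as an upper bound), I would take $\mathscr{U}=\bigcup\mathscr{C}$ as the candidate upper bound. First one checks routinely that $\mathscr{U}$ is an ideal containing $\mathscr{H}$: it is closed under passing to subsets because each member of $\mathscr{C}$ is, and it is closed under finite unions because any two of its elements already lie in a common member of the chain, using that $\mathscr{C}$ is totally ordered. Since $\mathscr{U}$ is evidently an upper bound for $\mathscr{C}$ in the inclusion order, the only substantive point is that $\mathscr{U}\in\mathscr{P}$, that is, that $X$ is connected modulo $\mathscr{U}$.

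The heart of the argument is this last verification, which I would carry out by contradiction. Suppose $f:X\rightarrow[0,1]$ is a continuous mapping that is $2$-valued modulo $\mathscr{U}$. The crucial observation is an asymmetry among the three defining conditions. The smallness condition, that $X\setminus(f^{-1}(0)\cup f^{-1}(1))$ lies in $\mathscr{U}=\bigcup\mathscr{C}$, places this remainder inside some \emph{single} member $\mathscr{J}$ of the chain. On the other hand, the two non-smallness conditions, that $f^{-1}(0)$ and $f^{-1}(1)$ do not lie in $\mathscr{U}$, say that these preimages lie in \emph{no} member of the chain, and in particular not in this very $\mathscr{J}$. Therefore $f$ is already $2$-valued modulo $\mathscr{J}$, contradicting the membership $\mathscr{J}\in\mathscr{P}$. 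This shows $X$ is connected modulo $\mathscr{U}$, completes the verification that $\mathscr{U}\in\mathscr{P}$, and hence Zorn's lemma yields the required maximal ideal $\mathscr{M}$.

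I expect the main obstacle, modest as it is, to be precisely the preservation of $\mathscr{H}$-connectedness under the union of a chain, and specifically the need to notice the asymmetry just described: one needs only one member of the chain to absorb the small remainder, whereas the ``not in the ideal'' conditions are inherited by every member automatically. Once this is seen, the remainder is standard Zorn's lemma bookkeeping together with the routine check that a union of a chain of ideals is again an ideal.
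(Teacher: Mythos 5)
Your proposal is correct and takes essentially the same approach as the paper: Zorn's lemma applied to the family of ideals containing $\mathscr{H}$ modulo which $X$ remains connected, with the union of a chain checked to be an ideal and the key point being exactly the asymmetry you identify, namely that the small remainder of a putative $2$-valued mapping lands in a single member of the chain, to which the two non-membership conditions automatically descend, yielding a contradiction.
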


\begin{proof}
Consider the family
\[\mathbb{H}=\{\mathscr{G}:\mathscr{G}\mbox{ is an ideal in $X$ containing $\mathscr{H}$ and $X$ is connected modulo $\mathscr{H}$}\},\]
partially ordered with the set-theoretic inclusion $\subseteq$. We show that $\mathbb{H}$ has a maximal element. To show this, by Zorn's lemma, it suffices to check that every non-empty linearly ordered (by $\subseteq$) subfamily of $\mathbb{H}$ has an upper bounded in $\mathbb{H}$. Let $\mathbb{G}$ be a non-empty linearly ordered subfamily of $\mathbb{H}$. Let
\[\mathscr{H}^*=\bigcup_{\mathscr{G}\in\mathbb{G}}\mathscr{G}.\]
It is easy to check that $\mathscr{H}^*$ is an ideal in $X$. (To check that $\mathscr{H}^*$ is closed under formation of finite unions, let $G$ and $H$ be two elements of $\mathscr{H}^*$. Then (by the definition of $\mathscr{H}^*$) there are elements $\mathscr{G}$ and $\mathscr{H}$ of $\mathbb{G}$ having $G$ and $H$, respectively. Since $\mathbb{G}$ is linearly ordered, either $\mathscr{G}$ contains $\mathscr{H}$, or $\mathscr{H}$ contains $\mathscr{G}$. But then $G$ and $H$ are both in either $\mathscr{G}$ or $\mathscr{H}$ and thus so is their union $G\cup H$. Therefore $G\cup H$ is in $\mathscr{H}^*$.) Also, $\mathscr{H}^*$ contains $\mathscr{H}$, as every element of $\mathscr{G}$ does (and $\mathscr{G}$ is non-empty). We now check that $X$ is connected modulo $\mathscr{H}^*$. Suppose otherwise. Then, there is a continuous mapping $f:X\rightarrow[0,1]$ which is $2$-valued modulo $\mathscr{H}^*$. The set
\[X\setminus\big(f^{-1}(0)\cup f^{-1}(1)\big)\]
is in $\mathscr{H}^*$ and thus is in $\mathscr{I}$ for some $\mathscr{I}$ in $\mathbb{G}$. But then $f$ is $2$-valued modulo $\mathscr{I}$, as $f^{-1}(0)$ and $f^{-1}(1)$ are neither in $\mathscr{H}^*$ and thus are neither in $\mathscr{I}$. That is, $X$ is not connected modulo $\mathscr{I}$. This contradiction shows that $X$ is connected modulo $\mathscr{H}^*$. Therefore $\mathscr{H}^*$ is an element of $\mathbb{H}$. It is clear that $\mathscr{H}^*$ contains every element of $\mathbb{G}$.
\end{proof}

Even in simple cases (such as the case when $X$ is the real line $\mathbb{R}$ and $\mathscr{H}$ is the trivial ideal $\{\emptyset\}$) we do not know any satisfactory description of an ideal whose existence is guaranteed by Theorem \ref{HGFF}. Let us formally state this below for possible future reference.

\begin{question}\label{JHGFD}
Let $X$ be a space and let $\mathscr{H}$ be an ideal in $X$ such that $X$ is connected modulo $\mathscr{H}$. Describe the elements of a maximal ideal $\mathscr{M}$ of $X$ which contains $\mathscr{H}$ and $X$ is connected modulo $\mathscr{M}$.
\end{question}

\subsection{Images of $\mathscr{H}$-connected spaces}

It is well known that any continuous image of a connected space is connected. This subsection is to provide a counterpart for this result in the new context.

Let $X$ and $Y$ be sets and let $\phi:X\rightarrow Y$ be a mapping. For an ideal $\mathscr{H}$ in $Y$ we denote
\[\phi^{-1}({\mathscr H})=\big\{A\subseteq X:\phi(A)\in{\mathscr H}\big\}.\]
It is easy to check that $\phi^{-1}({\mathscr H})$ is indeed an ideal in $X$.

\begin{theorem}\label{KJG}
Let $X$ and $Y$ be spaces and let $\phi:X\rightarrow Y$ be a continuous surjection. Let $\mathscr{H}$ be an ideal in $Y$. Then, if $X$ is connected modulo $\phi^{-1}({\mathscr H})$ then $Y$ is connected modulo $\mathscr{H}$.
\end{theorem}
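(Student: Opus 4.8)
The plan is to prove the contrapositive: assuming that $Y$ is \emph{not} connected modulo $\mathscr{H}$, I would construct a continuous mapping on $X$ that is $2$-valued modulo $\phi^{-1}(\mathscr{H})$, thereby showing that $X$ is not connected modulo $\phi^{-1}(\mathscr{H})$. By hypothesis there is a continuous mapping $g:Y\rightarrow[0,1]$ which is $2$-valued modulo $\mathscr{H}$, that is, $g^{-1}(0)$ and $g^{-1}(1)$ are neither in $\mathscr{H}$ while $Y\setminus(g^{-1}(0)\cup g^{-1}(1))$ is in $\mathscr{H}$. The natural candidate to work with is the composition $f=g\circ\phi:X\rightarrow[0,1]$, which is continuous as a composition of continuous mappings.

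To verify that $f$ is $2$-valued modulo $\phi^{-1}(\mathscr{H})$, I would first record the preimage identities $f^{-1}(0)=\phi^{-1}(g^{-1}(0))$ and $f^{-1}(1)=\phi^{-1}(g^{-1}(1))$, together with
\[X\setminus\big(f^{-1}(0)\cup f^{-1}(1)\big)=\phi^{-1}\big(Y\setminus(g^{-1}(0)\cup g^{-1}(1))\big).\]
The crucial observation is that, since $\phi$ is surjective, one has $\phi(\phi^{-1}(B))=B$ for every $B\subseteq Y$. Applying this with $B=g^{-1}(0)$ gives $\phi(f^{-1}(0))=g^{-1}(0)\notin\mathscr{H}$, so $f^{-1}(0)$ is not in $\phi^{-1}(\mathscr{H})$; the same argument applied to $B=g^{-1}(1)$ shows that $f^{-1}(1)$ is not in $\phi^{-1}(\mathscr{H})$. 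Finally, applying it with $B=Y\setminus(g^{-1}(0)\cup g^{-1}(1))$ yields
\[\phi\Big(X\setminus\big(f^{-1}(0)\cup f^{-1}(1)\big)\Big)=Y\setminus\big(g^{-1}(0)\cup g^{-1}(1)\big)\in\mathscr{H},\]
so that $X\setminus(f^{-1}(0)\cup f^{-1}(1))$ lies in $\phi^{-1}(\mathscr{H})$. Thus $f$ is $2$-valued modulo $\phi^{-1}(\mathscr{H})$, and $X$ is not connected modulo $\phi^{-1}(\mathscr{H})$, which completes the contrapositive.

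The argument is essentially bookkeeping with preimages, and the only real point requiring care is the use of surjectivity. Surjectivity is exactly what prevents the two ``large'' sets $g^{-1}(0)$ and $g^{-1}(1)$ from collapsing upon pulling back along $\phi$ and pushing forward again: without the identity $\phi(\phi^{-1}(B))=B$ one would have only the inclusion $\phi(\phi^{-1}(B))\subseteq B$, which is insufficient to conclude that $\phi(f^{-1}(0))$ fails to be in $\mathscr{H}$. I expect this to be the main (indeed essentially the only) obstacle, and it is handled cleanly by the surjectivity hypothesis.
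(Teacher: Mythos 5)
Your proof is correct and takes essentially the same route as the paper's: both argue by contraposition, form the composition $g\circ\phi$ of the $2$-valued mapping with $\phi$, and use surjectivity through the identity $\phi(\phi^{-1}(B))=B$ to verify that the composition is $2$-valued modulo $\phi^{-1}(\mathscr{H})$. Your closing remark correctly identifies surjectivity as the one essential hypothesis, exactly as in the paper.
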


\begin{proof}
Suppose that $Y$ is not connected modulo $\mathscr{H}$; we show that $X$ is then not connected modulo $\phi^{-1}({\mathscr H})$. Let $f:Y\rightarrow[0,1]$ be a continuous mapping which is $2$-valued modulo $\mathscr{H}$. The composition $f\phi:X\rightarrow[0,1]$ is continuous and, as we now check, is $2$-valued modulo $\phi^{-1}({\mathscr H})$. Observe that $(f\phi)^{-1}(0)$ is not in $\phi^{-1}({\mathscr H})$; as otherwise, \[\phi\big((f\phi)^{-1}(0)\big)=\phi\big(\phi^{-1}\big(f^{-1}(0)\big)\big)\]
is in ${\mathscr H}$. But this is not possible, as
\[\phi\big(\phi^{-1}\big(f^{-1}(0)\big)\big)=f^{-1}(0),\]
since $\phi$ is surjective. Similarly, $(f\phi)^{-1}(1)$ is not in $\phi^{-1}({\mathscr H})$. Also,
\[A=X\setminus\big((f\phi)^{-1}(0)\cup (f\phi)^{-1}(1)\big)\]
is in $\phi^{-1}({\mathscr H})$, as $Y\setminus(f^{-1}(0)\cup f^{-1}(1))$ is in $\mathscr{H}$,
\[\phi(A)=\phi\big(X\setminus\big(\phi^{-1}\big(f^{-1}(0)\big)\cup\phi^{-1}\big(f^{-1}(1)\big)\big)\big)=\phi\big(\phi^{-1}\big(Y\setminus\big(f^{-1}(0)\cup f^{-1}(1)\big)\big)\big)\]
and
\[\phi\big(\phi^{-1}\big(Y\setminus\big(f^{-1}(0)\cup f^{-1}(1)\big)\big)\big)=Y\setminus\big(f^{-1}(0)\cup f^{-1}(1)\big),\]
since $\phi$ is surjective. That is $f$ is $2$-valued modulo $\phi^{-1}({\mathscr H})$. The space $X$ is therefore not connected modulo $\phi^{-1}({\mathscr H})$.
\end{proof}

\subsection{Unions of $\mathscr{H}$-connected subspaces}

The union of a collection of connected subspaces of a space is connected, provided that their intersection is non-empty. Our purpose here is to provide a counterpart for this well known result in the new context.

Let $X$ be a set and let $\mathscr{H}$ be an ideal in $X$. For any subset $A$ of $X$ denote
\[{\mathscr H}|_A=\{H\cap A:H\in{\mathscr H}\}.\]
It is easy to check that ${\mathscr H}|_A$ is an ideal in $A$.

\begin{lemma}\label{JJHG}
Let $X$ be a space and let $\mathscr{H}$ be an ideal in $X$. Let $A$ be a subspace of $X$ which is connected modulo ${\mathscr H}|_A$. Let $f:X\rightarrow[0,1]$ be a continuous mapping which is $2$-valued modulo $\mathscr{H}$. Then either
\[A\cap f^{-1}(0)\quad\text{or}\quad A\cap f^{-1}(1)\]
is in $\mathscr{H}$.
\end{lemma}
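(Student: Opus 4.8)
The plan is to apply the connectedness of $A$ modulo ${\mathscr H}|_A$ to the restriction $g=f|_A:A\rightarrow[0,1]$, which is of course continuous. First I would record the evident identities $g^{-1}(0)=A\cap f^{-1}(0)$ and $g^{-1}(1)=A\cap f^{-1}(1)$, together with
\[A\setminus\big(g^{-1}(0)\cup g^{-1}(1)\big)=A\cap\big(X\setminus(f^{-1}(0)\cup f^{-1}(1))\big).\]
Since $f$ is $2$-valued modulo ${\mathscr H}$, the set $X\setminus(f^{-1}(0)\cup f^{-1}(1))$ lies in ${\mathscr H}$, so the right-hand side is of the form $H\cap A$ with $H\in{\mathscr H}$ and hence belongs to ${\mathscr H}|_A$. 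Thus $g$ already satisfies the second requirement in the definition of a mapping that is $2$-valued modulo ${\mathscr H}|_A$.

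Next I would argue against the first requirement using the hypothesis on $A$. If neither $g^{-1}(0)$ nor $g^{-1}(1)$ were in ${\mathscr H}|_A$, then $g$ would be a continuous mapping $A\rightarrow[0,1]$ that is $2$-valued modulo ${\mathscr H}|_A$, contradicting the assumption that $A$ is connected modulo ${\mathscr H}|_A$. Therefore at least one of $g^{-1}(0)=A\cap f^{-1}(0)$ and $g^{-1}(1)=A\cap f^{-1}(1)$ must lie in ${\mathscr H}|_A$.

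Finally I would upgrade membership in ${\mathscr H}|_A$ to membership in ${\mathscr H}$. If, say, $A\cap f^{-1}(0)\in{\mathscr H}|_A$, then $A\cap f^{-1}(0)=H\cap A$ for some $H\in{\mathscr H}$, so in particular $A\cap f^{-1}(0)\subseteq H$; since ${\mathscr H}$ is an ideal and hence closed under passing to subsets, it follows that $A\cap f^{-1}(0)\in{\mathscr H}$. The identical reasoning handles the case $A\cap f^{-1}(1)\in{\mathscr H}|_A$, and either way the desired conclusion follows.

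I do not anticipate any serious obstacle; the argument is essentially a transfer of the $2$-valued-modulo-an-ideal structure from $X$ to $A$ via restriction. The only point requiring a little care is the last step, where one observes that for a subset of $A$ the two notions of being in ${\mathscr H}|_A$ and being in ${\mathscr H}$ coincide. This is precisely where the downward-closure axiom of the ideal ${\mathscr H}$ is invoked, rather than any further property of $f$ or of the subspace $A$.
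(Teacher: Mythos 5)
Your proposal is correct and follows essentially the same route as the paper's own proof: restrict $f$ to $A$, use the definition of ${\mathscr H}|_A$ to see that the remainder condition is inherited, invoke connectedness of $A$ modulo ${\mathscr H}|_A$ to conclude that one of $(f|_A)^{-1}(0)$, $(f|_A)^{-1}(1)$ lies in ${\mathscr H}|_A$, and finally use the inclusion ${\mathscr H}|_A\subseteq{\mathscr H}$ (which, as you note, rests on the downward-closure axiom for ${\mathscr H}$). Your only addition is making that last inclusion explicit, which the paper dismisses as clear.
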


\begin{proof}
Consider the continuous mapping $f|_A:A\rightarrow[0,1]$. Observe that the set
\[X\setminus\big(f^{-1}(0)\cup f^{-1}(1)\big)\]
is in $\mathscr{H}$ and thus
\[A\cap\big(X\setminus\big(f^{-1}(0)\cup f^{-1}(1)\big)\big)\]
is in $\mathscr{H}|_A$. Note that
\[A\setminus\big((f|_A)^{-1}(0)\cup (f|_A)^{-1}(1)\big)=A\cap\big(X\setminus\big(f^{-1}(0)\cup f^{-1}(1)\big)\big),\]
as
\[(f|_A)^{-1}(0)=A\cap f^{-1}(0)\quad\text{and}\quad(f|_A)^{-1}(1)=A\cap f^{-1}(1).\]
But $f|_A$ is not $2$-valued modulo $\mathscr{H}|_A$, as $A$ is connected modulo ${\mathscr H}|_A$. Therefore, either \[(f|_A)^{-1}(0)\quad\text{or}\quad(f|_A)^{-1}(1)\]
is in $\mathscr{H}|_A$. It is clear that $\mathscr{H}|_A$ is contained in $\mathscr{H}$.
\end{proof}

\begin{theorem}\label{HFDF}
Let $X$ be a space and let $\mathscr{H}$ be an ideal in $X$. Suppose that
\[X=X_1\cup\cdots\cup X_n,\]
where $X_i$ is connected modulo ${\mathscr H}|_{X_i}$ for each $i=1,\dots,n$. Suppose that
\[A=X_1\cap\cdots\cap X_n\]
is connected modulo ${\mathscr H}|_A$ and is not in ${\mathscr H}$. Then $X$ is connected modulo ${\mathscr H}$.
\end{theorem}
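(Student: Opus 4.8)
The plan is to argue by contradiction and let Lemma~\ref{JJHG} do the heavy lifting. Suppose $X$ is not connected modulo $\mathscr{H}$, and fix a continuous mapping $f:X\rightarrow[0,1]$ which is $2$-valued modulo $\mathscr{H}$. The lemma applies both to the intersection $A$ and to each piece $X_i$, since each of these subspaces is connected modulo the corresponding restricted ideal; in every case it tells us that one of the two traces (with $f^{-1}(0)$ or with $f^{-1}(1)$) lands in $\mathscr{H}$. The whole proof will consist of showing that the hypothesis $A\notin\mathscr{H}$ forces all of these choices to be made on the same side, say the $f^{-1}(0)$ side, and that this in turn makes $f^{-1}(0)$ itself an element of $\mathscr{H}$, contradicting that $f$ is $2$-valued modulo $\mathscr{H}$.

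First I would apply Lemma~\ref{JJHG} to $A$. After possibly interchanging the roles of $0$ and $1$, I may assume that $A\cap f^{-1}(0)\in\mathscr{H}$. The next step is to observe that $A\cap f^{-1}(1)\notin\mathscr{H}$: indeed $A$ is the union of the three sets $A\cap f^{-1}(0)$, $A\cap f^{-1}(1)$ and $A\cap(X\setminus(f^{-1}(0)\cup f^{-1}(1)))$, the first and last of which lie in $\mathscr{H}$ (the last because $X\setminus(f^{-1}(0)\cup f^{-1}(1))\in\mathscr{H}$), so if the middle set were also in $\mathscr{H}$ then $A$ would be a finite union of members of $\mathscr{H}$, hence itself in $\mathscr{H}$, against hypothesis.

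Now I would run Lemma~\ref{JJHG} on each $X_i$ in turn. For each $i$ the lemma gives $X_i\cap f^{-1}(0)\in\mathscr{H}$ or $X_i\cap f^{-1}(1)\in\mathscr{H}$. The key leverage is that $A\subseteq X_i$, so $A\cap f^{-1}(1)\subseteq X_i\cap f^{-1}(1)$; were the second alternative to hold for some $i$, the ideal property would drag $A\cap f^{-1}(1)$ into $\mathscr{H}$, contradicting the previous paragraph. Hence the first alternative holds for every $i$, that is, $X_i\cap f^{-1}(0)\in\mathscr{H}$ for all $i$. Finally, since $X=X_1\cup\cdots\cup X_n$, I can write $f^{-1}(0)=\bigcup_{i=1}^{n}(X_i\cap f^{-1}(0))$ as a finite union of elements of $\mathscr{H}$, so $f^{-1}(0)\in\mathscr{H}$, which is the desired contradiction.

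The only genuinely delicate point is the coordination in the third paragraph: a priori Lemma~\ref{JJHG} chooses a side for each $X_i$ independently, and nothing local rules out different pieces choosing different sides. The assumption that $A$ is not in $\mathscr{H}$ is exactly what synchronizes these choices, and the finiteness of the cover is what lets the ideal absorb the resulting union. I expect no other obstacle; everything else is bookkeeping with the ideal axioms.
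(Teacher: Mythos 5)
Your proof is correct and follows essentially the same route as the paper's: both arguments fix a mapping $f$ that is $2$-valued modulo $\mathscr{H}$, apply Lemma~\ref{JJHG} to $A$ and to each $X_i$, use the containment $A\subseteq X_i$ together with the three-piece decomposition of $A$ to force all the choices onto one side, and then absorb the finite union $\bigcup_{i}(X_i\cap f^{-1}(\cdot))$ into $\mathscr{H}$ for the final contradiction. The only difference is cosmetic: you isolate the claim $A\cap f^{-1}(1)\notin\mathscr{H}$ as a separate step before the per-$i$ analysis, while the paper re-derives the same contradiction ($A\in\mathscr{H}$) inside each case.
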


\begin{proof}
Suppose to the contrary that $X$ is not connected modulo ${\mathscr H}$. Suppose that $f:X\rightarrow[0,1]$ is a continuous mapping which is $2$-valued modulo $\mathscr{H}$. Since $A$ is connected modulo ${\mathscr H}|_A$, by Lemma \ref{JJHG}, either
\[A\cap f^{-1}(0)\quad\text{or}\quad A\cap f^{-1}(1),\]
say the latter, is in $\mathscr{H}$. Similarly, for each $i=1,\dots,n$, either
\[X_i\cap f^{-1}(0)\quad\text{or}\quad X_i\cap f^{-1}(1)\]
is in $\mathscr{H}$. Fix some $i=1,\dots,n$. Suppose that $X_i\cap f^{-1}(0)$ is in $\mathscr{H}$. Then $A\cap f^{-1}(0)$ is also in $\mathscr{H}$, as it is contained in $X_i\cap f^{-1}(0)$. Also, $A\setminus(f^{-1}(0)\cup f^{-1}(1))$ is in $\mathscr{H}$, as it is contained in $X\setminus(f^{-1}(0)\cup f^{-1}(1))$ and the latter is in $\mathscr{H}$. But then
\[A=\big[A\cap f^{-1}(0)\big]\cup\big[A\cap f^{-1}(1)\big]\cup\big[A\setminus\big(f^{-1}(0)\cup f^{-1}(1)\big)\big]\]
is in $\mathscr{H}$, which contradicts our assumption. Therefore $X_i\cap f^{-1}(1)$ is in $\mathscr{H}$ for each $i=1,\dots,n$, and thus so is their union $f^{-1}(1)$. This contradiction shows that $X$ is connected modulo ${\mathscr H}$.
\end{proof}

\begin{remark}\label{HFF}
Theorem \ref{HFDF} fails if the number of $X_i$'s is infinite; this will be illustrated in Example \ref{HGGG}.
\end{remark}

The following theorem generalizes Theorem \ref{HFDF}. We omit the proof, as it is analogous to the one we have already given for Theorem \ref{HFDF}.

Let $X$ be a set and let $\kappa$ be an infinite cardinal. An ideal $\mathscr{H}$ in $X$ is called \textit{$\kappa$-complete} if for every subcollection $\mathscr{G}$ of $\mathscr{H}$ of cardinality $<\kappa$ the union $\bigcup\mathscr{G}$ is in $\mathscr{H}$.

\begin{theorem}\label{LJF}
Let $X$ be a space and let $\mathscr{H}$ be a $\kappa$-complete ideal in $X$. Suppose that
\[X=\bigcup_{i<\kappa} X_i,\]
where $X_i$ is connected modulo ${\mathscr H}|_{X_i}$ for each $i<\kappa$. Suppose that
\[A=\bigcap_{i<\kappa} X_i\]
is connected modulo ${\mathscr H}|_A$ and is not in ${\mathscr H}$. Then $X$ is connected modulo ${\mathscr H}$.
\end{theorem}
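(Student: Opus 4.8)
The plan is to reproduce the proof of Theorem \ref{HFDF} almost verbatim, replacing the two appeals to finiteness of the index set by appeals to $\kappa$-completeness. I would argue by contradiction, so suppose $X$ is \emph{not} connected modulo $\mathscr{H}$ and fix a continuous mapping $f:X\to[0,1]$ that is $2$-valued modulo $\mathscr{H}$. First I would apply Lemma \ref{JJHG} to the subspace $A=\bigcap_{i<\kappa}X_i$, which by hypothesis is connected modulo $\mathscr{H}|_A$: this yields that one of $A\cap f^{-1}(0)$ and $A\cap f^{-1}(1)$ lies in $\mathscr{H}$, and after relabelling $0$ and $1$ if necessary I may assume $A\cap f^{-1}(1)\in\mathscr{H}$. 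Next, applying Lemma \ref{JJHG} to each $X_i$ (connected modulo $\mathscr{H}|_{X_i}$) gives, for every $i<\kappa$, that one of $X_i\cap f^{-1}(0)$ and $X_i\cap f^{-1}(1)$ is in $\mathscr{H}$.

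The heart of the argument is to rule out the ``$0$'' alternative uniformly. Fix $i<\kappa$ and suppose, towards a contradiction, that $X_i\cap f^{-1}(0)\in\mathscr{H}$. Then $A\cap f^{-1}(0)\subseteq X_i\cap f^{-1}(0)$ is in $\mathscr{H}$; combined with $A\cap f^{-1}(1)\in\mathscr{H}$ from the first step and with $A\setminus(f^{-1}(0)\cup f^{-1}(1))\subseteq X\setminus(f^{-1}(0)\cup f^{-1}(1))\in\mathscr{H}$, the decomposition
\[A=\big[A\cap f^{-1}(0)\big]\cup\big[A\cap f^{-1}(1)\big]\cup\big[A\setminus\big(f^{-1}(0)\cup f^{-1}(1)\big)\big]\]
exhibits $A$ as the union of three members of $\mathscr{H}$, whence $A\in\mathscr{H}$, contradicting the hypothesis that $A\notin\mathscr{H}$. (This threefold union is finite and so is legitimate in any ideal; no completeness is needed here.) Therefore $X_i\cap f^{-1}(1)\in\mathscr{H}$ for every $i<\kappa$.

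It remains to assemble the contradiction. Since $X=\bigcup_{i<\kappa}X_i$, we have
\[f^{-1}(1)=\bigcup_{i<\kappa}\big(X_i\cap f^{-1}(1)\big),\]
a union of members of $\mathscr{H}$, and this is exactly the point at which $\kappa$-completeness is invoked in place of the finite-union closure used for Theorem \ref{HFDF}: it should deliver $f^{-1}(1)\in\mathscr{H}$, contradicting that $f$ is $2$-valued modulo $\mathscr{H}$. I expect this last step to be the main obstacle, and the only genuinely new point compared with the finite case, precisely because of a cardinality bookkeeping issue: the family $\{X_i\cap f^{-1}(1):i<\kappa\}$ is indexed by $\kappa$, so its cardinality can be $\kappa$ rather than strictly less than $\kappa$. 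To make the appeal to completeness rigorous one must therefore either read the covering hypothesis as supplying strictly fewer than $\kappa$ distinct pieces, or understand ``$\kappa$-complete'' as closure under unions of at most $\kappa$ sets (equivalently, work with $\kappa^+$-completeness); with the size of the final union brought within the completeness bound, the remaining verifications are identical to those in the proof of Theorem \ref{HFDF}.
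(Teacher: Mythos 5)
Your argument is exactly the one the paper intends: the paper omits the proof of this theorem, saying only that it is ``analogous to the one we have already given for Theorem \ref{HFDF}'', and your line-by-line adaptation (the restriction lemma applied to $A$ and to each $X_i$, the threefold decomposition of $A$ to rule out the ``$0$'' alternative, then the union over all $i<\kappa$) is precisely that analogue.

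The cardinality problem you flag at the end is not a quibble; it is a genuine defect of the statement as printed, and your proof is valid only under one of the two repairs you propose. Under the paper's definition, $\kappa$-completeness licenses unions of strictly fewer than $\kappa$ members of $\mathscr{H}$, while the cover has $\kappa$ pieces, and the theorem so read is actually \emph{false}: the paper's own Example \ref{HGGG} is a counterexample with $\kappa=\omega_1$. There $X=A\oplus I$ with $A=I=[0,\Omega)$, $X_i=A\cup\{i\}$ for $i\in I$, and $\mathscr{H}=\mathscr{R}_X$. By Lemma \ref{JHFD}, $\mathscr{H}$ consists of the sets whose closure is realcompact, i.e.\ (since closed bounded subsets of $[0,\Omega)$ are compact, while closed unbounded ones are countably compact and noncompact, hence not realcompact) exactly the sets whose traces on $A$ and on $I$ are both bounded; as $\Omega$ has uncountable cofinality, a countable union of such sets is again such a set, so $\mathscr{H}$ is $\omega_1$-complete in the paper's sense. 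The cover $X=\bigcup_{i<\omega_1}X_i$ has exactly $\omega_1$ pieces, and the paper itself verifies that each $X_i$ is connected modulo $\mathscr{H}|_{X_i}$, that $A=\bigcap_{i<\omega_1}X_i$ is connected modulo $\mathscr{H}|_A$, and that $A\notin\mathscr{H}$; yet $X$ is not connected modulo $\mathscr{H}$. Indeed, in your final step, taking $f$ to be $0$ on $A$ and $1$ on $I$, the sets $X_i\cap f^{-1}(1)=\{i\}$ are $\omega_1$ many members of $\mathscr{H}$ whose union $I$ is not in $\mathscr{H}$ --- exactly the failure you predicted. (A further symptom: under the paper's definition every ideal is $\omega$-complete, so the case $\kappa=\omega$ of the literal statement would extend Theorem \ref{HFDF} to arbitrary countable covers of arbitrary ideals, which Remark \ref{HFF} and Example \ref{JGF} rule out.) So the theorem must be read either with strictly fewer than $\kappa$ pieces in the cover, or with ``$\kappa$-complete'' meaning closure under unions of $\kappa$ (equivalently, at most $\kappa$) many members --- presumably the author's intent; under either reading your proof is complete and correct.
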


\subsection{Closures of $\mathscr{H}$-connected subspaces}

It is well known that the closure of a connected subspace of a space is connected. In the following we provide a counterpart for this result in the new context.

Let $X$ be a space and let $\mathscr{H}$ be an ideal in $X$. Define
\[\mathrm{cl}_X{\mathscr H}=\langle\mathrm{cl}_XH:H\in{\mathscr H}\rangle.\]

\begin{theorem}\label{HGF}
Let $X$ be a space and let $\mathscr{H}$ be an ideal in $X$. For a subset $A$ of $X$ suppose that
\[\mathrm{cl}_XA=X\quad\text{and}\quad\mathrm{cl}_X({\mathscr H}|_A)={\mathscr H}.\]
Then, if $A$ is connected modulo $\mathscr{H}|_A$ then $X$ is connected modulo ${\mathscr H}$.
\end{theorem}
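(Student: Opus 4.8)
The plan is to argue by contraposition: assuming $X$ is \emph{not} connected modulo $\mathscr{H}$, I will produce a mapping witnessing that $A$ is not connected modulo $\mathscr{H}|_A$. So let $f\colon X\to[0,1]$ be continuous and $2$-valued modulo $\mathscr{H}$, and write $Z_0=f^{-1}(0)$, $Z_1=f^{-1}(1)$, and $M=X\setminus(Z_0\cup Z_1)$, so that $M\in\mathscr{H}$ while $Z_0,Z_1\notin\mathscr{H}$. The natural candidate to test on $A$ is the restriction $f|_A$. Since $(f|_A)^{-1}(0)=A\cap Z_0$ and $(f|_A)^{-1}(1)=A\cap Z_1$, and $A\setminus\big((f|_A)^{-1}(0)\cup(f|_A)^{-1}(1)\big)=A\cap M\in\mathscr{H}|_A$, the third defining condition of $2$-valuedness modulo $\mathscr{H}|_A$ is automatic; the only thing to verify is that neither $A\cap Z_0$ nor $A\cap Z_1$ lies in $\mathscr{H}|_A$.

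The heart of the matter is to show $A\cap Z_0\notin\mathscr{H}|_A$ (the case of $Z_1$ being symmetric), and I will prove the contrapositive: if $A\cap Z_0\in\mathscr{H}|_A$, then $Z_0\in\mathscr{H}$, contradicting $2$-valuedness. One's first instinct is to try $Z_0\subseteq\mathrm{cl}_X(A\cap Z_0)$, but density of $A$ in $X$ does not force $A\cap Z_0$ to be dense in the closed set $Z_0$, so this is where the main obstacle lies. The fix is to enlarge $Z_0$ by the small set $M$: I claim $Z_0\subseteq\mathrm{cl}_X\big(A\cap(Z_0\cup M)\big)$. To see this, fix $x\in Z_0$ and an open neighbourhood $W$ of $x$; since $f$ is continuous and $f(x)=0$, the set $W\cap f^{-1}([0,1/2))$ is again an open neighbourhood of $x$, so by density of $A$ it contains some point $a\in A$. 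As $f(a)<1/2$ we have $a\notin Z_1$, hence $a\in A\cap(Z_0\cup M)$ while $a\in W$; thus every neighbourhood of $x$ meets $A\cap(Z_0\cup M)$, proving the claim.

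It remains to feed this into the hypothesis $\mathrm{cl}_X(\mathscr{H}|_A)=\mathscr{H}$. Under the standing assumption $A\cap Z_0\in\mathscr{H}|_A$, together with $A\cap M\in\mathscr{H}|_A$, the union $A\cap(Z_0\cup M)=(A\cap Z_0)\cup(A\cap M)$ lies in the ideal $\mathscr{H}|_A$. Its $X$-closure is therefore one of the generators of $\mathrm{cl}_X(\mathscr{H}|_A)$, so $\mathrm{cl}_X\big(A\cap(Z_0\cup M)\big)\in\mathrm{cl}_X(\mathscr{H}|_A)=\mathscr{H}$. Since $Z_0\subseteq\mathrm{cl}_X\big(A\cap(Z_0\cup M)\big)$ and $\mathscr{H}$ is downward closed, $Z_0\in\mathscr{H}$, the desired contradiction. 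By symmetry $A\cap Z_1\notin\mathscr{H}|_A$ as well, so $f|_A$ is $2$-valued modulo $\mathscr{H}|_A$ and $A$ fails to be connected modulo $\mathscr{H}|_A$; this is exactly the contrapositive of the statement. I expect the density-trapping argument of the second paragraph---recognizing that one must pass to $Z_0\cup M$ rather than $Z_0$ and then use continuity to keep the approximating points of $A$ out of $Z_1$---to be the only genuinely delicate point, with everything else being routine bookkeeping with the ideal axioms and the definition of $\mathrm{cl}_X$.
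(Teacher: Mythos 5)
Your proposal is correct and follows essentially the same route as the paper: both restrict $f$ to $A$, absorb the middle set $M=X\setminus(f^{-1}(0)\cup f^{-1}(1))$ into an endpoint fiber to form $A\cap(Z_i\cup M)$, use density of $A$ to see that the $X$-closure of this set contains $Z_i$ (the paper notes $Z_1\cup M=f^{-1}((0,1])$ is open and cites $\mathrm{cl}_X(A\cap U)=\mathrm{cl}_XU$, where your neighbourhood argument re-proves this fact by hand), and then invoke $\mathrm{cl}_X(\mathscr{H}|_A)=\mathscr{H}$ plus downward closedness to force $Z_i\in\mathscr{H}$, a contradiction. The only cosmetic difference is that you rule out both $A\cap Z_0$ and $A\cap Z_1$ symmetrically, while the paper lets connectedness of $A$ modulo $\mathscr{H}|_A$ hand it one of them and argues without loss of generality.
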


\begin{proof}
Suppose that $A$ is connected modulo $\mathscr{H}|_A$. Suppose to the contrary that $X$ is not connected modulo ${\mathscr H}$. Let $f:X\rightarrow[0,1]$ be a continuous mapping which is $2$-valued modulo $\mathscr{H}$. The mapping $f|_A:A\rightarrow[0,1]$ cannot be $2$-valued modulo $\mathscr{H}|_A$. Therefore, since
\[A\setminus\big((f|_A)^{-1}(0)\cup (f|_A)^{-1}(1)\big)=A\cap\big(X\setminus\big(f^{-1}(0)\cup f^{-1}(1)\big)\big)\]
is in $\mathscr{H}|_A$, as $X\setminus(f^{-1}(0)\cup f^{-1}(1))$ is in $\mathscr{H}$, either
\[A\cap f^{-1}(0)=(f|_A)^{-1}(0)\quad\text{or}\quad A\cap f^{-1}(1)=(f|_A)^{-1}(1),\]
say the latter, is in $\mathscr{H}|_A$. But then
\[A\cap f^{-1}\big((0,1]\big)=\big[A\cap f^{-1}(1)\big]\cup\big[A\setminus\big(f^{-1}(0)\cup f^{-1}(1)\big)\big]\]
is in $\mathscr{H}|_A$. Thus $\mathrm{cl}_X(A\cap f^{-1}((0,1]))$ is in $\mathrm{cl}_X(\mathscr{H}|_A)$. Observe that
\[\mathrm{cl}_X\big(A\cap f^{-1}\big((0,1]\big)\big)=\mathrm{cl}_Xf^{-1}\big((0,1]\big),\]
as $f^{-1}((0,1])$ is open in $X$ and $A$ is dense in $X$. Therefore $\mathrm{cl}_Xf^{-1}((0,1])$ is in $\mathrm{cl}_X(\mathscr{H}|_A)$, and thus so is its subset $f^{-1}(1)$. Since $\mathrm{cl}_X({\mathscr H}|_A)={\mathscr H}$, it follows that $f^{-1}(1)$ is in ${\mathscr H}$. This contradiction proves the theorem.
\end{proof}

\subsection{Products of $\mathscr{H}$-connected spaces}

As it is well known, the product of a collection of connected spaces remains connected. We do not know how this can be formulated in the new context, assuming indeed that such a formulation at all exists. (See Examples \ref{HGHFDF} and \ref{FFD} for related results.) For possible future reference we record this formally as a question.

\begin{question}
Let $X$ and $Y$ be spaces and let $\mathscr{H}$ and $\mathscr{G}$ be ideals in $X$ and $Y$, respectively. Define
\[\mathscr{H}\times\mathscr{G}=\langle H\times G:H\in\mathscr{H}\mbox{ and }G\in\mathscr{G}\rangle.\]
Suppose that $X$ and $Y$ are connected modulo $\mathscr{H}$ and $\mathscr{G}$, respectively. Is it then true that $X\times Y$ is connected modulo $\mathscr{H}\times\mathscr{G}$?
\end{question}

\begin{remark}
The definition we have given for the product of two ideals is not the standard one, though, in this context it is perhaps the most appropriate one.
\end{remark}

\subsection{$\mathscr{H}$-connectedness in completely regular spaces}

In this subsection we show that connectedness modulo an ideal ${\mathscr H}$ of subsets of a completely regular space $X$ is equivalent to connectedness of a certain subspace of the Stone--\v{C}ech compactification $\beta X$ of $X$ naturally associated with ${\mathscr H}$. This characterization is particularly useful when we deal with specific examples in the subsequent section.

For a completely regular space $X$ and an ideal ${\mathscr H}$ in $X$ we need to introduce the following subspace $\lambda_{\mathscr H} X$ of its Stone--\v{C}ech compactification $\beta X$.

\begin{definition}\label{RRA}
Let $X$ be a completely regular space and let ${\mathscr H}$ be an ideal in $X$. Define
\[\lambda_{\mathscr H} X=\bigcup\{\mathrm{int}_{\beta X}\mathrm{cl}_{\beta X}A:\mathrm{cl}_XA\in{\mathscr H}\}.\]
\end{definition}

Recall that any two disjoint zero-sets $S$ and $T$ is a space $X$ are completely separated; for if $S=\mathrm{Z}(f)$ and $T=\mathrm{Z}(g)$, where $f,g:X\rightarrow[0,1]$ are continuous mappings, then the mapping \[h=\frac{f}{f+g}:X\longrightarrow[0,1]\]
is continuous and is $0$ on $S$ and $1$ on $T$. Indeed, $S=h^{-1}(0)$ and $T=h^{-1}(1)$.

\begin{theorem}\label{BBV}
Let $X$ be a completely regular space and let $\mathscr{H}$ be an ideal in $X$. The following are equivalent.
\begin{itemize}
\item[\rm(1)] $X$ is connected modulo $\mathscr{H}$.
\item[\rm(2)] $\beta X\setminus\lambda_{\mathscr H} X$ is connected.
\end{itemize}
\end{theorem}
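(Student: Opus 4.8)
The plan is to prove both implications in contrapositive form, after first recording a more usable description of $\lambda_{\mathscr H}X$. The key preliminary observation is that, since $X$ is dense in $\beta X$, every open $V\subseteq\beta X$ satisfies $V\subseteq\mathrm{cl}_{\beta X}(V\cap X)$; from this I would show that a point $p\in\beta X$ lies in $\lambda_{\mathscr H}X$ \emph{if and only if} $p$ has an open neighbourhood $V$ in $\beta X$ with $\mathrm{cl}_X(V\cap X)\in\mathscr H$. Indeed, if $\mathrm{cl}_X A\in\mathscr H$ then $\mathrm{int}_{\beta X}\mathrm{cl}_{\beta X}A\cap X\subseteq\mathrm{cl}_{\beta X}A\cap X=\mathrm{cl}_X A$, so any such $\mathrm{int}_{\beta X}\mathrm{cl}_{\beta X}A$ is an eligible $V$; conversely, taking $A=V\cap X$ gives $V\subseteq\mathrm{cl}_{\beta X}A$ and hence $V\subseteq\mathrm{int}_{\beta X}\mathrm{cl}_{\beta X}A$. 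Equivalently, $p\notin\lambda_{\mathscr H}X$ exactly when $\mathrm{cl}_X(V\cap X)\notin\mathscr H$ for every open neighbourhood $V$ of $p$. I will use this description repeatedly, together with the fact that $Y:=\beta X\setminus\lambda_{\mathscr H}X$ is closed, hence compact, in $\beta X$.

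For $(1)\Rightarrow(2)$ I argue contrapositively: assuming a continuous $f:X\to[0,1]$ that is $2$-valued modulo $\mathscr H$, I produce a genuine $2$-valued map on $Y$. Extend $f$ to $f_\beta:\beta X\to[0,1]$. First I check $f_\beta^{-1}((0,1))\subseteq\lambda_{\mathscr H}X$: if $f_\beta(p)=t\in(0,1)$, pick $\varepsilon$ with $[t-\varepsilon,t+\varepsilon]\subseteq(0,1)$ and set $V=f_\beta^{-1}((t-\varepsilon,t+\varepsilon))$; then $\mathrm{cl}_X(V\cap X)\subseteq f^{-1}([t-\varepsilon,t+\varepsilon])\subseteq f^{-1}((0,1))$, which lies in $\mathscr H$, so $p\in\lambda_{\mathscr H}X$ by the description above. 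Hence $Y\subseteq f_\beta^{-1}(\{0,1\})$, so $f_\beta|_Y$ takes only the values $0$ and $1$. It remains to see both values are attained on $Y$. If, say, $f_\beta^{-1}(0)\cap Y=\emptyset$, then the compact set $f_\beta^{-1}(0)$ is covered by neighbourhoods $V$ with $\mathrm{cl}_X(V\cap X)\in\mathscr H$; a finite subcover $V_1,\dots,V_k$ yields $f^{-1}(0)=f_\beta^{-1}(0)\cap X\subseteq\bigcup_{i=1}^{k}\mathrm{cl}_X(V_i\cap X)\in\mathscr H$, contradicting that $f$ is $2$-valued modulo $\mathscr H$. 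Thus $f_\beta|_Y:Y\to\{0,1\}$ is continuous and surjective, so $Y$ is disconnected.

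For $(2)\Rightarrow(1)$ I again argue contrapositively, starting from a disconnection $Y=Y_0\cup Y_1$ into disjoint non-empty relatively clopen (hence compact) pieces. Since $\beta X$ is compact Hausdorff, hence normal, I choose open sets $W_0\supseteq Y_0$ and $W_1\supseteq Y_1$ with $\mathrm{cl}_{\beta X}W_0\cap\mathrm{cl}_{\beta X}W_1=\emptyset$, and set $K=\beta X\setminus(W_0\cup W_1)$. Then $K$ is compact and disjoint from $Y$, so $K\subseteq\lambda_{\mathscr H}X$; covering $K$ by neighbourhoods with $\mathrm{cl}_X$-traces in $\mathscr H$ and passing to a finite subcover gives $K\cap X\in\mathscr H$. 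By Urysohn's lemma I pick a continuous $G:\beta X\to[0,1]$ that is $0$ on $\mathrm{cl}_{\beta X}W_0$ and $1$ on $\mathrm{cl}_{\beta X}W_1$, and set $f=G|_X$. Since $G\in\{0,1\}$ throughout $W_0\cup W_1$, we have $f^{-1}((0,1))\subseteq K\cap X\in\mathscr H$. Finally, $f^{-1}(0)$ is closed in $X$ and contains $W_0\cap X$; picking any $p\in Y_0$, we have $p\in W_0$ and $p\notin\lambda_{\mathscr H}X$, so the description above (applied to the neighbourhood $W_0$) gives $\mathrm{cl}_X(W_0\cap X)\notin\mathscr H$, and since this set is contained in the closed set $f^{-1}(0)$ we conclude $f^{-1}(0)\notin\mathscr H$; symmetrically $f^{-1}(1)\notin\mathscr H$. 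Hence $f$ is $2$-valued modulo $\mathscr H$ and $X$ is not connected modulo $\mathscr H$.

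The main obstacle is entirely located in matching the two ``smallness'' notions: membership in $\mathscr H$, which is closed only under \emph{finite} unions, against being covered by the \emph{open} set $\lambda_{\mathscr H}X$. Both directions resolve this by compactness, as $f_\beta^{-1}(0)$ and $K$ are compact, so the open cover extracted from $\lambda_{\mathscr H}X$ reduces to a finite union that lands in $\mathscr H$. A second delicate point, which dictates the shape of the construction in $(2)\Rightarrow(1)$, is that $\mathscr H$ need not be closed under closures; this is why I force the transition region of $G$ into the compact set $K$ (by separating $Y_0$ and $Y_1$ with neighbourhoods having disjoint closures) and why I exploit that $f^{-1}(0)$ and $f^{-1}(1)$ are already closed in $X$, so that a lower bound on $\mathrm{cl}_X(W_i\cap X)$ transfers directly to $f^{-1}(i)$.
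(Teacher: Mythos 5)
Your proof is correct and is essentially the paper's own argument: both directions are handled contrapositively, using the extension $f_\beta$, Urysohn's lemma applied to a disconnection of $\beta X\setminus\lambda_{\mathscr H}X$, and compactness plus a finite subcover to convert coverage by the open set $\lambda_{\mathscr H}X$ into membership in $\mathscr{H}$; your neighbourhood characterization of $\lambda_{\mathscr H}X$ merely packages the density/openness facts the paper verifies inline, and your $W_0$, $W_1$, $K$ construction plays exactly the role of the paper's level sets $f^{-1}([0,1/3])$, $f^{-1}([2/3,1])$, $f^{-1}([1/3,2/3])$. One cosmetic slip worth noting: your two paragraphs carry interchanged labels, since deducing a disconnection of $\beta X\setminus\lambda_{\mathscr H}X$ from a map that is $2$-valued modulo $\mathscr{H}$ is the contrapositive of $(2)\Rightarrow(1)$ rather than of $(1)\Rightarrow(2)$, and conversely — but as both implications are proved, the equivalence is fully established.
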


\begin{proof}
(1) \textit{implies} (2). Suppose that $\beta X\setminus\lambda_{\mathscr H} X$ is not connected. We show that $X$ is not connected modulo $\mathscr{H}$. Let $E$ and $G$ be a separation for $\beta X\setminus\lambda_{\mathscr H} X$. Note that $E$ and $G$ are closed in $\beta X$, as they are closed in $\beta X\setminus\lambda_{\mathscr H} X$ and the latter is closed in $\beta X$, since $\lambda_{\mathscr H} X$ is open in $\beta X$ by its definition. Since $\beta X$ is normal, by the Urysohn lemma, there exists a continuous mapping $f:\beta X\rightarrow[0,1]$ such that
\[f|_E=\textbf{0}\quad\text{and}\quad f|_G=\textbf{1}.\]
Let
\[C=X\cap f^{-1}\big([0,1/3]\big)\quad\text{and}\quad D=X\cap f^{-1}\big([2/3,1]\big).\]
We verify that the pair $C$ and $D$ constitutes a separation for $X$ modulo $\mathscr{H}$; Theorem \ref{KJHG} will then conclude the proof. Note that $C$ and $D$ are disjoint zero-sets of $X$ and therefore are completely separated in $X$. We check that $C$ is not in $\mathscr{H}$; the same proof will show that $D$ is not in $\mathscr{H}$ either. Suppose otherwise. Let
\[A=X\cap f^{-1}\big([0,1/3)\big).\]
Then
\[\mathrm{int}_{\beta X}\mathrm{cl}_{\beta X}A\subseteq\lambda_{\mathscr H} X\]
by the definition of $\lambda_{\mathscr H} X$, as $\mathrm{cl}_XA$ is contained in $C$. But then $E$ is contained in $\lambda_{\mathscr H} X$, as
\[E\subseteq f^{-1}\big([0,1/3)\big)\]
and
\[f^{-1}\big([0,1/3)\big)\subseteq\mathrm{int}_{\beta X}\mathrm{cl}_{\beta X}\big(X\cap f^{-1}\big([0,1/3)\big)\big),\]
since
\[\mathrm{cl}_{\beta X}\big(X\cap f^{-1}\big([0,1/3)\big)\big)=\mathrm{cl}_{\beta X}f^{-1}\big([0,1/3)\big),\]
because $X$ is dense in $\beta X$ and $f^{-1}([0,1/3))$ is open in $\beta X$. This contradicts the choice of $E$. It remains to show that the remainder $X\setminus(C\cup D)$ is in $\mathscr{H}$. Note that
\[f^{-1}\big([1/3,2/3]\big)\subseteq\lambda_{\mathscr H} X,\]
as $f$ is either $0$ or $1$ on $\beta X\setminus\lambda_{\mathscr H} X$ by the way it is defined. Since $f^{-1}([1/3,2/3])$ is compact, using the definition of $\lambda_{\mathscr H} X$ we have
\begin{equation}\label{JY}
f^{-1}\big([1/3,2/3]\big)\subseteq\mathrm{int}_{\beta X}\mathrm{cl}_{\beta X}A_1\cup\cdots\cup\mathrm{int}_{\beta X}\mathrm{cl}_{\beta X}A_n,
\end{equation}
where $A_i$ is a subset of $X$ whose closure $\mathrm{cl}_XA_i$ is contained in an element $H_i$ of ${\mathscr H}$ for each $i=1,\ldots,n$. We now intersect (\ref{JY}) with $X$ to obtain
\begin{eqnarray*}
X\setminus(C\cup D)&\subseteq&X\cap f^{-1}\big([1/3,2/3]\big)\\&\subseteq&\mathrm{cl}_XA_1\cup\cdots\cup\mathrm{cl}_XA_n\subseteq H_1\cup\cdots\cup H_n.
\end{eqnarray*}
Thus $X\setminus(C\cup D)$ is contained in an element of $\mathscr{H}$ and is therefore in $\mathscr{H}$.

(2) \textit{implies} (1). Suppose that $X$ is not connected modulo $\mathscr{H}$. We show that $\beta X\setminus\lambda_{\mathscr H} X$ is not connected. Let $C$ and $D$ be a separation for $X$ modulo $\mathscr{H}$. The sets $C$ and $D$ are completely separated in $X$, thus, there exists a continuous mapping $f:X\rightarrow[0,1]$ with
\[f|_C=\textbf{0}\quad\text{and}\quad f|_D=\textbf{1}.\]
Let $f_\beta:\beta X\rightarrow[0,1]$ be the continuous extension of $f$. Let $0<r<s<1$. Let
\[A=f^{-1}\big((r,s)\big).\]
Then $\mathrm{cl}_XA$ is contained in an element of $\mathscr{H}$, namely $X\setminus(C\cup D)$, as $\mathrm{cl}_XA$ is contained in $f^{-1}([r,s])$ and the latter is contained in $X\setminus(C\cup D)$ by the way we have defined $f$. Thus
\[\mathrm{int}_{\beta X}\mathrm{cl}_{\beta X}A\subseteq\lambda_{\mathscr H} X\]
by the definition of $\lambda_{\mathscr H} X$. Arguing as in the above part, we can check that
\[f_\beta^{-1}\big((r,s)\big)\subseteq\mathrm{int}_{\beta X}\mathrm{cl}_{\beta X}A.\]
Thus
\[f_\beta^{-1}\big((r,s)\big)\subseteq\lambda_{\mathscr H} X.\]
But this holds for every $0<r<s<1$, therefore
\[f_\beta^{-1}\big((0,1)\big)\subseteq\lambda_{\mathscr H} X.\]
This, in particular, implies that
\[\beta X\setminus\lambda_{\mathscr H} X=\big(f_\beta^{-1}(0)\setminus\lambda_{\mathscr H} X\big)\cup\big(f_\beta^{-1}(1)\setminus\lambda_{\mathscr H} X\big).\]
As we now check, the sets
\[K=f_\beta^{-1}(0)\setminus\lambda_{\mathscr H} X\quad\text{and}\quad L=f_\beta^{-1}(1)\setminus\lambda_{\mathscr H} X\]
are neither empty. Since $K$ and $L$ are both closed in $\beta X\setminus\lambda_{\mathscr H} X$, this will imply that $\beta X\setminus\lambda_{\mathscr H} X$ is not connected. To show this, suppose to the contrary that $f_\beta^{-1}(0)\setminus\lambda_{\mathscr H} X$ is empty. (The proof that $f_\beta^{-1}(1)\setminus\lambda_{\mathscr H} X$ is non-empty is analogous.) Then $f_\beta^{-1}(0)$ is contained in $\lambda_{\mathscr H} X$ and thus, since $f_\beta^{-1}(0)$ is compact, using the definition of $\lambda_{\mathscr H} X$ we have
\begin{equation}\label{HF}
f_\beta^{-1}(0)\subseteq\mathrm{int}_{\beta X}\mathrm{cl}_{\beta X}A_1\cup\cdots\cup\mathrm{int}_{\beta X}\mathrm{cl}_{\beta X}A_n,
\end{equation}
where $A_i$ is a subset of $X$ such that $\mathrm{cl}_XA_i$ is contained in an element $H_i$ of ${\mathscr H}$ for each $i=1,\ldots,n$. Now, we intersect (\ref{HF}) with $X$ to obtain
\begin{eqnarray*}
f^{-1}(0)=X\cap f_\beta^{-1}(0)\subseteq\mathrm{cl}_XA_1\cup\cdots\cup\mathrm{cl}_XA_n\subseteq H_1\cup\cdots\cup H_n.
\end{eqnarray*}
Therefore, since $C$ is contained in $f^{-1}(0)$ by the definition of $f$, the set $C$ is contained in an element of ${\mathscr H}$, and is thus in ${\mathscr H}$. But, this is a contradiction.
\end{proof}

\begin{remark}
Note that for a completely regular space $X$ we have $\lambda_{\mathscr H}X=\emptyset$ in the case when ${\mathscr H}$ is the trivial ideal $\{\emptyset\}$. In this case Theorem \ref{BBV} reduces to the nearly trivial assertion: \textit{$X$ is connected if and only if $\beta X$ is connected}.
\end{remark}

In Theorem \ref{BBV} it might be of some interest to know when $\beta X\setminus\lambda_{\mathscr H} X$ is contained in $\beta X\setminus X$. This will be the context of our next result.

\begin{definition}\label{JHGF}
Let $X$ be a space and let $\mathscr{H}$ be an ideal in $X$. The space $X$ is called \textit{local modulo $\mathscr{H}$} if for every $x$ in $X$ there is open neighborhood $U$ of $x$ in $X$ whose closure $\mathrm{cl}_XU$ is in $\mathscr{H}$.
\end{definition}

\begin{proposition}\label{JJHGH}
Let $X$ be a completely regular space and let $\mathscr{H}$ be an ideal in $X$. The following are equivalent.
\begin{itemize}
\item[\rm(1)] $X$ is local modulo $\mathscr{H}$.
\item[\rm(2)] $\beta X\setminus\lambda_{\mathscr H} X$ is contained in $\beta X\setminus X$.
\end{itemize}
\end{proposition}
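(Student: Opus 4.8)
The plan is to first reformulate condition (2) in a more convenient form. Since both $X$ and $\lambda_{\mathscr H} X$ are subsets of $\beta X$, taking complements in $\beta X$ shows that $\beta X\setminus\lambda_{\mathscr H} X\subseteq\beta X\setminus X$ holds if and only if $X\subseteq\lambda_{\mathscr H} X$. Thus I will prove that $X$ is local modulo $\mathscr{H}$ if and only if every point of $X$ lies in $\lambda_{\mathscr H} X$. The two standard facts about the dense subspace $X$ of $\beta X$ that I expect to use repeatedly are: for $A\subseteq X$ one has $\mathrm{cl}_XA=X\cap\mathrm{cl}_{\beta X}A$, and for an open subset $W$ of $\beta X$ one has $\mathrm{cl}_{\beta X}(X\cap W)=\mathrm{cl}_{\beta X}W$, the latter because $X\cap W$ is dense in $W$.

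To prove that (1) implies (2), I would fix $x\in X$ and use locality to choose an open neighborhood $U$ of $x$ in $X$ with $\mathrm{cl}_XU\in\mathscr{H}$. Writing $U=X\cap W$ for some open subset $W$ of $\beta X$ containing $x$, the density fact gives $W\subseteq\mathrm{cl}_{\beta X}(X\cap W)=\mathrm{cl}_{\beta X}U$; since $W$ is open this yields $W\subseteq\mathrm{int}_{\beta X}\mathrm{cl}_{\beta X}U$. Taking $A=U$ in the definition of $\lambda_{\mathscr H} X$, which is legitimate because $\mathrm{cl}_XU\in\mathscr{H}$, then places $x\in W\subseteq\mathrm{int}_{\beta X}\mathrm{cl}_{\beta X}U\subseteq\lambda_{\mathscr H} X$, as required.

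For the converse, I would fix $x\in X\subseteq\lambda_{\mathscr H} X$ and use the definition of $\lambda_{\mathscr H} X$ to obtain a set $A\subseteq X$ with $\mathrm{cl}_XA\in\mathscr{H}$ and $x\in G:=\mathrm{int}_{\beta X}\mathrm{cl}_{\beta X}A$. The set $U=X\cap G$ is then an open neighborhood of $x$ in $X$, and since $U\subseteq\mathrm{cl}_{\beta X}A$ I get $\mathrm{cl}_{\beta X}U\subseteq\mathrm{cl}_{\beta X}A$, whence $\mathrm{cl}_XU=X\cap\mathrm{cl}_{\beta X}U\subseteq X\cap\mathrm{cl}_{\beta X}A=\mathrm{cl}_XA$. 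As $\mathscr{H}$ is closed under taking subsets and $\mathrm{cl}_XA\in\mathscr{H}$, it follows that $\mathrm{cl}_XU\in\mathscr{H}$, so $X$ is local modulo $\mathscr{H}$.

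Neither direction presents a genuine obstacle; the argument is essentially bookkeeping with the interior and closure operators. The only point requiring care is the correct use of the two density identities above, in particular ensuring that one passes through an \emph{open} set when going from $\mathrm{cl}_{\beta X}$ to $\mathrm{int}_{\beta X}\mathrm{cl}_{\beta X}$. The reformulation of (2) as the inclusion $X\subseteq\lambda_{\mathscr H} X$ at the outset is what makes the whole proof transparent.
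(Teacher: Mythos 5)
Your proof is correct. The direction (2) $\Rightarrow$ (1) is essentially identical to the paper's: intersect $\mathrm{int}_{\beta X}\mathrm{cl}_{\beta X}A$ with $X$ and note that the closure of the resulting neighborhood is trapped inside $\mathrm{cl}_XA\in\mathscr{H}$. The direction (1) $\Rightarrow$ (2) is where you genuinely diverge: the paper invokes complete regularity to produce a continuous mapping $f:X\rightarrow[0,1]$ with $f(x)=0$ and $f|_{X\setminus U}=\mathbf{1}$, passes to the Stone extension $f_\beta:\beta X\rightarrow[0,1]$, and uses the open sets $f_\beta^{-1}\big([0,r)\big)$ of $\beta X$ as the witnesses placing $x$ inside $\mathrm{int}_{\beta X}\mathrm{cl}_{\beta X}U$; you instead simply write $U=X\cap W$ with $W$ open in $\beta X$ (the definition of the subspace topology) and run the density argument on $W$ directly. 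Both arguments hinge on the same fact, namely that $X\cap W$ is dense in the open set $W$, so that $W\subseteq\mathrm{cl}_{\beta X}(X\cap W)$ and hence $W\subseteq\mathrm{int}_{\beta X}\mathrm{cl}_{\beta X}(X\cap W)$; but your route is shorter and avoids the Urysohn-type function and its extension entirely, since the functional detour buys nothing here that the subspace topology does not already provide. Your opening reformulation of (2) as the inclusion $X\subseteq\lambda_{\mathscr H}X$ is implicit in the paper as well (both directions there argue pointwise over $x\in X$), so that part is the same in substance, merely made explicit.
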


\begin{proof}
(1) \textit{implies} (2). Let $x$ be in $X$. Let $U$ be an open neighborhood of $x$ in $X$ whose closure $\mathrm{cl}_XU$ is in $\mathscr{H}$. Let $f:X\rightarrow[0,1]$ be a continuous mapping such that
\[f(x)=0\quad\text{and}\quad f|_{X\setminus U}=\mathbf{1}.\]
Let $f_\beta:\beta X\rightarrow[0,1]$ denote the continuous extension of $f$. Let $0<r<1$. Observe that $x$ is in $f_\beta^{-1}([0,r))$,
\[f_\beta^{-1}\big([0,r)\big)\subseteq\mathrm{int}_{\beta X}\mathrm{cl}_{\beta X}f_\beta^{-1}\big([0,r)\big)\]
trivially,
\[\mathrm{cl}_{\beta X}f_\beta^{-1}\big([0,r)\big)=\mathrm{cl}_{\beta X}\big(X\cap f_\beta^{-1}\big([0,r)\big)\big),\]
as $f_\beta^{-1}([0,r))$ is open in $\beta X$ and $X$ is dense in $\beta X$, and
\[X\cap f_\beta^{-1}\big([0,r)\big)=f^{-1}\big([0,r)\big)\subseteq U\]
by the definition of $f$. Therefore $x$ is in $\mathrm{int}_{\beta X}\mathrm{cl}_{\beta X}U$. But the latter is contained in $\lambda_{\mathscr H} X$ by the definition of $\lambda_{\mathscr H} X$. Thus $x$ is in $\lambda_{\mathscr H} X$.

(2) \textit{implies} (1). Let $x$ be in $X$. Then $x$ is in $\lambda_{\mathscr H} X$, and thus $x$ is in $\mathrm{int}_{\beta X}\mathrm{cl}_{\beta X}A$ for some subset $A$ of $X$ whose closure $\mathrm{cl}_XA$ is in $\mathscr{H}$. Let
\[U=X\cap\mathrm{int}_{\beta X}\mathrm{cl}_{\beta X}A.\]
Then $U$ is an open neighborhood of $x$ in $X$. Also, $\mathrm{cl}_XU$ is in $\mathscr{H}$, as it is contained in $\mathrm{cl}_XA$.
\end{proof}

\section{Examples; connectedness modulo a topological property}\label{HGGGF}

In this section we provide specific examples for the general results we have obtained in the previous section. The idea is to correspond an ideal to a topological property in a natural way. A space will be then called ``connected modulo the topological property'' provided that it is connected modulo the ideal corresponded to that topological property. This will be done for a topological property which is closed hereditary and preserved under finite closed sums of subspaces in Subsection \ref{HJHGF}, for pseudocompactness in Subsection \ref{JJNBBV}, and for realcompactness in Subsection \ref{HGDFS}.

\subsection{The case of a closed hereditary topological property preserved under finite closed sums of subspaces}\label{HJHGF}
Results of this subsection are mostly found in \cite{Kou12}; they have been derived here, however, as corollaries of our general study of connectedness modulo an ideal.

Let us start with the following definition.

\begin{definition}\label{HGF}
A topological property $\mathfrak{P}$ is said to be
\begin{itemize}
  \item \textit{closed hereditary}, if any closed subspace of a space with $\mathfrak{P}$ also has $\mathfrak{P}$.
  \item \textit{preserved under finite closed sums of subspaces}, if any space which is a finite union of its closed subspaces each having $\mathfrak{P}$ also has $\mathfrak{P}$.
  \item \textit{preserved} (\textit{inversely preserved}, respectively) under a class $\mathscr{M}$ of mappings, if for every surjective mapping $f:X\rightarrow Y$ in $\mathscr{M}$, the space $Y$ ($X$, respectively) has $\mathfrak{P}$ if $X$ ($Y$, respectively) has $\mathfrak{P}$.
  \end{itemize}
\end{definition}

We will assume that every topological property is \textit{non-empty}, that is, for every topological property $\mathfrak{P}$ there indeed exists a space which has $\mathfrak{P}$. Note that for a closed hereditary topological property $\mathfrak{P}$ this implies that the empty space $\emptyset$ has $\mathfrak{P}$.

The following definition (together with its subsequent lemma) is to provide a connection between ideals and certain classes of topological properties.

\begin{definition}\label{JHG}
Let $\mathfrak{P}$ be a topological property. For a space $X$ define
\[{\mathscr H}_\mathfrak{P}(X)=\{H\subseteq X:\mathrm{cl}_XH\mbox{ has }\mathfrak{P}\}.\]
\end{definition}

\begin{lemma}\label{JKBG}
Let $\mathfrak{P}$ be a closed hereditary topological property which is preserved under finite closed sums of subspaces. Then, for a space $X$ the set  ${\mathscr H}_\mathfrak{P}(X)$ is an ideal in $X$.
\end{lemma}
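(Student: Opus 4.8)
The plan is to verify directly that ${\mathscr H}_\mathfrak{P}(X)$ satisfies the two defining closure conditions of an ideal, together with non-emptiness, using each of the two hypotheses on $\mathfrak{P}$ exactly once. First I would note that ${\mathscr H}_\mathfrak{P}(X)$ is non-empty: since $\mathfrak{P}$ is closed hereditary and non-empty, the empty space $\emptyset$ has $\mathfrak{P}$ (as remarked just above the definition), so $\mathrm{cl}_X\emptyset=\emptyset$ has $\mathfrak{P}$ and hence $\emptyset\in{\mathscr H}_\mathfrak{P}(X)$.

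For downward closure, suppose $A\subseteq B$ with $B\in{\mathscr H}_\mathfrak{P}(X)$, so that $\mathrm{cl}_XB$ has $\mathfrak{P}$. Then $\mathrm{cl}_XA\subseteq\mathrm{cl}_XB$, and since $\mathrm{cl}_XA$ is closed in $X$ it is closed in the subspace $\mathrm{cl}_XB$. As $\mathfrak{P}$ is closed hereditary, the closed subspace $\mathrm{cl}_XA$ of $\mathrm{cl}_XB$ has $\mathfrak{P}$, whence $A\in{\mathscr H}_\mathfrak{P}(X)$.

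For closure under finite unions, take $G,H\in{\mathscr H}_\mathfrak{P}(X)$, so that $\mathrm{cl}_XG$ and $\mathrm{cl}_XH$ have $\mathfrak{P}$. The key identity is $\mathrm{cl}_X(G\cup H)=\mathrm{cl}_XG\cup\mathrm{cl}_XH$; thus $\mathrm{cl}_X(G\cup H)$ is the union of its two closed subspaces $\mathrm{cl}_XG$ and $\mathrm{cl}_XH$, each of which has $\mathfrak{P}$. Since $\mathfrak{P}$ is preserved under finite closed sums of subspaces, $\mathrm{cl}_X(G\cup H)$ has $\mathfrak{P}$, so $G\cup H\in{\mathscr H}_\mathfrak{P}(X)$.

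The verification is entirely routine and I anticipate no substantial obstacle. The only point requiring mild care is ensuring that in each step the relevant closure is genuinely a closed subspace of the appropriate ambient space — namely that $\mathrm{cl}_XA$, being closed in $X$ and contained in $\mathrm{cl}_XB$, is closed in the subspace $\mathrm{cl}_XB$, and that $\mathrm{cl}_XG$ and $\mathrm{cl}_XH$ are closed in $\mathrm{cl}_X(G\cup H)$ — so that the closed-hereditary and finite-closed-sum hypotheses genuinely apply. A useful sanity check is that each of the two assumptions on $\mathfrak{P}$ is invoked in exactly one of the two ideal conditions, confirming that the lemma's hypotheses are precisely the natural ones.
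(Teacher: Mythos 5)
Your proof is correct and follows essentially the same route as the paper's: non-emptiness via the empty space having $\mathfrak{P}$, downward closure via the closed-hereditary hypothesis, and closure under unions via the identity $\mathrm{cl}_X(G\cup H)=\mathrm{cl}_XG\cup\mathrm{cl}_XH$ together with preservation under finite closed sums. The extra care you take in noting that $\mathrm{cl}_XA$ is closed in the subspace $\mathrm{cl}_XB$ is exactly the point the paper uses implicitly.
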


\begin{proof}
Note that the empty space $\emptyset$ has $\mathfrak{P}$ and ${\mathscr H}_\mathfrak{P}(X)$ is therefore non-empty. Suppose that $H$ is in ${\mathscr H}_\mathfrak{P}(X)$ and that $A$ is a subset of $H$. Then $\mathrm{cl}_XH$ has $\mathfrak{P}$ and so does its closed subspace $\mathrm{cl}_XA$, as $\mathfrak{P}$ is closed hereditary. That is, $A$ is in ${\mathscr H}_\mathfrak{P}(X)$. Suppose that $G$ and $H$ are in ${\mathscr H}_\mathfrak{P}(X)$. Then $\mathrm{cl}_XG$ and $\mathrm{cl}_XH$ both have $\mathfrak{P}$, and so does their union \[\mathrm{cl}_XG\cup\mathrm{cl}_XH=\mathrm{cl}_X(G\cup H),\]
as $\mathfrak{P}$ is preserved under finite closed sums of subspaces. That is, $G\cup H$ is in ${\mathscr H}_\mathfrak{P}(X)$.
\end{proof}

In view of the above lemma the following definition now makes sense.

\begin{definition}\label{JHG}
Let $\mathfrak{P}$ be a closed hereditary topological property which is preserved under finite closed sums of subspaces. A space $X$ is said to be \textit{connected modulo $\mathfrak{P}$} if it is connected modulo the ideal ${\mathscr H}_\mathfrak{P}(X)$ in $X$.
\end{definition}

Observe that for a topological property $\mathfrak{P}$ which is closed hereditary and preserved under finite closed sums of subspaces, a space $X$ is connected modulo $\mathfrak{P}$ if it has $\mathfrak{P}$. This is because in this case, $X$ will be in ${\mathscr H}_\mathfrak{P}(X)$, which implies the non-existence of any mapping on $X$ which is $2$-valued modulo ${\mathscr H}_\mathfrak{P}(X)$.

Next, in a series of results we study how connectedness modulo a topological property behaves with respect to continuous mappings, unions and closures.

Recall that a mapping $\phi:X\rightarrow Y$, where $X$ and $Y$ are spaces, is called \textit{perfect}, if $\phi$ is a closed mapping and the fiber $\phi^{-1}(y)$ is compact for each $y$ in $Y$.

\begin{theorem}\label{JHFD}
Let $\mathfrak{P}$ be a closed hereditary topological property which is preserved under finite closed sums of subspaces and is both preserved and inversely preserved under perfect continuous surjections. Then, connectedness modulo $\mathfrak{P}$ is preserved under perfect continuous surjections.
\end{theorem}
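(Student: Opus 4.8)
The plan is to reduce everything to Theorem \ref{KJG}. Fix a perfect continuous surjection $\phi:X\rightarrow Y$ and suppose $X$ is connected modulo $\mathfrak{P}$, that is, connected modulo the ideal $\mathscr{H}_\mathfrak{P}(X)$. We want $Y$ connected modulo $\mathscr{H}_\mathfrak{P}(Y)$. By Theorem \ref{KJG}, applied with $\mathscr{H}=\mathscr{H}_\mathfrak{P}(Y)$, it is enough to show that $X$ is connected modulo $\phi^{-1}(\mathscr{H}_\mathfrak{P}(Y))$. The heart of the argument will therefore be the ideal identity $\phi^{-1}(\mathscr{H}_\mathfrak{P}(Y))=\mathscr{H}_\mathfrak{P}(X)$; once this is in hand, the hypothesis that $X$ is connected modulo $\mathscr{H}_\mathfrak{P}(X)$ is literally the statement that $X$ is connected modulo $\phi^{-1}(\mathscr{H}_\mathfrak{P}(Y))$, and Theorem \ref{KJG} finishes the proof.

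To set up the identity I would first record the two standard features of perfect maps that do the real work. Since $\phi$ is perfect it is in particular closed, so for every $A\subseteq X$ one has $\phi(\mathrm{cl}_XA)=\mathrm{cl}_Y\phi(A)$ (the inclusion $\subseteq$ is continuity, and $\supseteq$ holds because $\phi(\mathrm{cl}_XA)$ is a closed set containing $\phi(A)$). Second, perfectness is inherited by restrictions: if $F$ is closed in $X$ then $\phi|_F\colon F\rightarrow\phi(F)$ is a perfect continuous surjection, and for any $B\subseteq Y$ the restriction $\phi|_{\phi^{-1}(B)}\colon\phi^{-1}(B)\rightarrow B$ is a perfect continuous surjection. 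Both facts follow by checking that fibers stay compact and that the maps stay closed (for the second, if $C=\tilde C\cap\phi^{-1}(B)$ with $\tilde C$ closed in $X$, then $\phi(C)=\phi(\tilde C)\cap B$ is closed in $B$).

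With these in place I would prove the two inclusions. For $\mathscr{H}_\mathfrak{P}(X)\subseteq\phi^{-1}(\mathscr{H}_\mathfrak{P}(Y))$, take $A$ with $\mathrm{cl}_XA$ having $\mathfrak{P}$; the perfect surjection $\phi|_{\mathrm{cl}_XA}\colon\mathrm{cl}_XA\rightarrow\phi(\mathrm{cl}_XA)=\mathrm{cl}_Y\phi(A)$ together with preservation of $\mathfrak{P}$ under perfect continuous surjections shows $\mathrm{cl}_Y\phi(A)$ has $\mathfrak{P}$, i.e. $A\in\phi^{-1}(\mathscr{H}_\mathfrak{P}(Y))$. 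For the reverse inclusion, take $A$ with $\mathrm{cl}_Y\phi(A)$ having $\mathfrak{P}$. Then $\phi^{-1}(\mathrm{cl}_Y\phi(A))$ is closed, contains $A$, hence contains $\mathrm{cl}_XA$; the perfect surjection $\phi|_{\phi^{-1}(\mathrm{cl}_Y\phi(A))}\colon\phi^{-1}(\mathrm{cl}_Y\phi(A))\rightarrow\mathrm{cl}_Y\phi(A)$ together with inverse preservation of $\mathfrak{P}$ shows $\phi^{-1}(\mathrm{cl}_Y\phi(A))$ has $\mathfrak{P}$, and then its closed subspace $\mathrm{cl}_XA$ has $\mathfrak{P}$ because $\mathfrak{P}$ is closed hereditary. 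Thus $A\in\mathscr{H}_\mathfrak{P}(X)$, giving equality.

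The main obstacle I anticipate is not conceptual but bookkeeping: one must verify cleanly that each restriction of $\phi$ really is a perfect continuous surjection, and keep straight which of the three hypotheses on $\mathfrak{P}$ is used where (preservation for one inclusion, inverse preservation plus closed hereditariness for the other). The equality $\phi(\mathrm{cl}_XA)=\mathrm{cl}_Y\phi(A)$ is what lets the two preservation hypotheses connect the closures computed in $X$ and in $Y$; identifying this as the right bridge is the crux. After the identity $\phi^{-1}(\mathscr{H}_\mathfrak{P}(Y))=\mathscr{H}_\mathfrak{P}(X)$ is established, the conclusion is immediate from Theorem \ref{KJG}.
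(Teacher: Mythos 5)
Your proposal is correct and follows essentially the same route as the paper: reduction via Theorem \ref{KJG} to the ideal identity $\phi^{-1}({\mathscr H}_\mathfrak{P}(Y))={\mathscr H}_\mathfrak{P}(X)$, proved by restricting $\phi$ to $\mathrm{cl}_XA$ (using preservation) and to $\phi^{-1}(\mathrm{cl}_Y\phi(A))$ (using inverse preservation plus closed hereditariness). The only cosmetic difference is that in the first inclusion you invoke the equality $\phi(\mathrm{cl}_XA)=\mathrm{cl}_Y\phi(A)$ for closed maps, whereas the paper uses the containment $\mathrm{cl}_Y\phi(A)\subseteq\phi(\mathrm{cl}_XA)$ together with closed hereditariness of $\mathfrak{P}$; both are valid.
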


\begin{proof}
Let $\phi:X\rightarrow Y$ be a perfect continuous mapping from a space $X$ which is connected modulo $\mathfrak{P}$ onto a space $Y$.
We prove that $Y$ is connected modulo $\mathfrak{P}$, that is, $Y$ is connected modulo the ideal ${\mathscr H}_\mathfrak{P}(Y)$. To prove this, by Theorem \ref{KJG}, it suffices to prove that $X$ is connected modulo $\phi^{-1}({\mathscr H}_\mathfrak{P}(Y))$. We prove the latter by verifying that
\begin{equation}\label{HUGFDF}
{\mathscr H}_\mathfrak{P}(X)=\phi^{-1}\big({\mathscr H}_\mathfrak{P}(Y)\big).
\end{equation}
Since $X$ is connected modulo ${\mathscr H}_\mathfrak{P}(X)$, this will prove the theorem.

Let $A$ be in ${\mathscr H}_\mathfrak{P}(X)$. Then $\mathrm{cl}_XA$ has $\mathfrak{P}$. Observe that the mapping
\[\phi|_{\mathrm{cl}_XA}:\mathrm{cl}_XA\longrightarrow\phi(\mathrm{cl}_XA),\]
which is obtained from $f$ by restricting it to a closed subspace of its domain, is perfect, and is clearly surjective. Thus $\phi(\mathrm{cl}_XA)$ has $\mathfrak{P}$, as $\mathfrak{P}$ is preserved under perfect continuous surjections. Note that $\phi(\mathrm{cl}_XA)$ is closed in $Y$, as $\phi$ is a closed mapping. In particular, $\phi(\mathrm{cl}_XA)$ contains $\mathrm{cl}_Y\phi(A)$ as a closed subspace. Therefore $\mathrm{cl}_Y\phi(A)$ has $\mathfrak{P}$, as $\mathfrak{P}$ is closed hereditary. Thus $\phi(A)$ is in ${\mathscr H}_\mathfrak{P}(Y)$ and therefore $A$ is in $\phi^{-1}({\mathscr H}_\mathfrak{P}(Y))$.

Next, let $B$ be in $\phi^{-1}({\mathscr H}_\mathfrak{P}(Y))$. Then $\phi(B)$ is in ${\mathscr H}_\mathfrak{P}(Y)$ and therefore $\mathrm{cl}_Y\phi(B)$ has $\mathfrak{P}$. Observe that the mapping
\[\phi|_{\phi^{-1}(\mathrm{cl}_Y\phi(B))}:\phi^{-1}\big(\mathrm{cl}_Y\phi(B)\big)\longrightarrow\mathrm{cl}_Y\phi(B)\]
is perfect, and is surjective, as $\phi$ is so. Thus $\phi^{-1}(\mathrm{cl}_Y\phi(B))$ has $\mathfrak{P}$, as $\mathfrak{P}$ is inversely preserved under perfect continuous surjections. But $\phi^{-1}(\mathrm{cl}_Y\phi(B))$ contains $\mathrm{cl}_XB$ as a closed subspace and $\mathfrak{P}$ is closed hereditary, thus $\mathrm{cl}_XB$ has $\mathfrak{P}$. Therefore $B$ is in ${\mathscr H}_\mathfrak{P}(X)$. This together with the above explanation proves (\ref{HUGFDF}).
\end{proof}

\begin{lemma}\label{KLJK}
Let $\mathfrak{P}$ be a closed hereditary topological property which is preserved under finite closed sums of subspaces. Let $X$ be a space and let $A$ be a closed subset of $X$. Then
\[{\mathscr H}_\mathfrak{P}(X)|_A={\mathscr H}_\mathfrak{P}(A).\]
\end{lemma}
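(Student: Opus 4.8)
The plan is to prove the two inclusions separately, the key tool being the elementary observation that since $A$ is closed in $X$, the closure operators $\mathrm{cl}_A$ and $\mathrm{cl}_X$ agree on subsets of $A$: for any $K\subseteq A$ one has $\mathrm{cl}_A K=\mathrm{cl}_X K\cap A=\mathrm{cl}_X K$, where the last equality holds because $\mathrm{cl}_X K\subseteq\mathrm{cl}_X A=A$. This identity reduces every computation of an $A$-closure to an $X$-closure, after which closed hereditariness of $\mathfrak{P}$ does the rest.

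For the inclusion ${\mathscr H}_\mathfrak{P}(X)|_A\subseteq{\mathscr H}_\mathfrak{P}(A)$, I would take an arbitrary element $B$ of ${\mathscr H}_\mathfrak{P}(X)|_A$, so that $B=H\cap A$ for some $H\subseteq X$ with $\mathrm{cl}_X H$ having $\mathfrak{P}$. Since $B\subseteq H$, its closure $\mathrm{cl}_X B$ is a closed subspace of $\mathrm{cl}_X H$, hence has $\mathfrak{P}$ because $\mathfrak{P}$ is closed hereditary. By the closure identity above, $\mathrm{cl}_A B=\mathrm{cl}_X B$, so $\mathrm{cl}_A B$ has $\mathfrak{P}$ and therefore $B$ is in ${\mathscr H}_\mathfrak{P}(A)$.

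For the reverse inclusion ${\mathscr H}_\mathfrak{P}(A)\subseteq{\mathscr H}_\mathfrak{P}(X)|_A$, I would take $K$ in ${\mathscr H}_\mathfrak{P}(A)$, so that $K\subseteq A$ and $\mathrm{cl}_A K$ has $\mathfrak{P}$. Using the identity again, $\mathrm{cl}_X K=\mathrm{cl}_A K$ has $\mathfrak{P}$, so $K$ already lies in ${\mathscr H}_\mathfrak{P}(X)$; and since $K\subseteq A$ we have $K=K\cap A$, which exhibits $K$ as an element of ${\mathscr H}_\mathfrak{P}(X)|_A$.

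I do not expect any serious obstacle here: the entire content is captured by the closure identity for closed $A$, and closed hereditariness is exactly what is needed to transfer $\mathfrak{P}$ from $\mathrm{cl}_X H$ to the smaller closed set $\mathrm{cl}_X B$. It is worth remarking that the hypothesis that $\mathfrak{P}$ is preserved under finite closed sums of subspaces is never actually invoked in establishing the equality; it is needed only to guarantee, via Lemma \ref{JKBG}, that both sides are genuine ideals. The sole property of $\mathfrak{P}$ that the argument itself uses is that it is closed hereditary.
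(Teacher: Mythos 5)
Your proof is correct and is essentially the same argument as the paper's: both inclusions are established via the identity $\mathrm{cl}_A B=\mathrm{cl}_X B$ for $B\subseteq A$ closed in $X$, together with closed hereditariness of $\mathfrak{P}$ to pass from $\mathrm{cl}_X H$ to its closed subspace $\mathrm{cl}_X B$. Your closing remark is also accurate: the paper's proof likewise invokes only closed hereditariness, the finite-closed-sums hypothesis being needed only so that both sides are ideals.
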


\begin{proof}
Let $B$ be in ${\mathscr H}_\mathfrak{P}(X)|_A$. Then $B=C\cap A$ for some $C$ in ${\mathscr H}_\mathfrak{P}(X)$. In particular, $\mathrm{cl}_XC$ has $\mathfrak{P}$. But $\mathrm{cl}_AB=\mathrm{cl}_XB$, as $A$ is closed in $X$, and $\mathrm{cl}_XB$ has $\mathfrak{P}$, as $\mathrm{cl}_XB$ is closed in $\mathrm{cl}_XC$ and $\mathfrak{P}$ is closed hereditary. Thus $B$ is in ${\mathscr H}_\mathfrak{P}(A)$.

Now, let $B$ be in ${\mathscr H}_\mathfrak{P}(A)$. Then $\mathrm{cl}_AB$ has $\mathfrak{P}$. But $\mathrm{cl}_XB=\mathrm{cl}_AB$, thus $B$ is in ${\mathscr H}_\mathfrak{P}(X)$. Therefore $B=B\cap A$ is in ${\mathscr H}_\mathfrak{P}(X)|_A$.
\end{proof}

\begin{theorem}\label{HGDF}
Let $\mathfrak{P}$ be a closed hereditary topological property which is preserved under finite closed sums of subspaces. Let $X$ be a space such that
\[X=X_1\cup\cdots\cup X_n,\]
where $X_i$ for each $i=1,\dots,n$ is a closed subspace of $X$ which is connected modulo $\mathfrak{P}$. Suppose that
\[X_1\cap\cdots\cap X_n\]
is connected modulo $\mathfrak{P}$ and is non-$\mathfrak{P}$. Then $X$ is connected modulo $\mathfrak{P}$.
\end{theorem}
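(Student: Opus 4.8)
The plan is to recognize this as a direct corollary of the purely ideal-theoretic Theorem \ref{HFDF}, obtained by specializing to the ideal $\mathscr{H}={\mathscr H}_\mathfrak{P}(X)$ and translating each hypothesis about the property $\mathfrak{P}$ into the corresponding hypothesis about this ideal. The essential bridge is Lemma \ref{KLJK}, which is exactly why the closedness of the $X_i$ (absent from Theorem \ref{HFDF}) is imposed here.

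First I would set $\mathscr{H}={\mathscr H}_\mathfrak{P}(X)$ and recall that by definition ``$X$ is connected modulo $\mathfrak{P}$'' means ``$X$ is connected modulo ${\mathscr H}_\mathfrak{P}(X)$'', so it suffices to verify the three hypotheses of Theorem \ref{HFDF} for this $\mathscr{H}$. For the covering pieces, each $X_i$ is closed in $X$, so Lemma \ref{KLJK} yields ${\mathscr H}_\mathfrak{P}(X)|_{X_i}={\mathscr H}_\mathfrak{P}(X_i)$; since $X_i$ is assumed connected modulo $\mathfrak{P}$, i.e.\ connected modulo ${\mathscr H}_\mathfrak{P}(X_i)$, it is therefore connected modulo ${\mathscr H}|_{X_i}$, which is the first hypothesis.

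Next I would treat the intersection $A=X_1\cap\cdots\cap X_n$. As a finite intersection of closed sets, $A$ is itself closed in $X$, so Lemma \ref{KLJK} applies again to give ${\mathscr H}_\mathfrak{P}(X)|_A={\mathscr H}_\mathfrak{P}(A)$; the hypothesis that $A$ is connected modulo $\mathfrak{P}$ thus becomes exactly connectedness of $A$ modulo ${\mathscr H}|_A$. It remains to translate the ``non-$\mathfrak{P}$'' assumption into ``$A\notin\mathscr{H}$'': because $A$ is closed we have $\mathrm{cl}_X A=A$, so by the definition of ${\mathscr H}_\mathfrak{P}(X)$ membership $A\in{\mathscr H}_\mathfrak{P}(X)$ is equivalent to $A$ having $\mathfrak{P}$; since $A$ is non-$\mathfrak{P}$, we conclude $A\notin\mathscr{H}$.

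With all three hypotheses of Theorem \ref{HFDF} verified for $\mathscr{H}={\mathscr H}_\mathfrak{P}(X)$, that theorem immediately gives that $X$ is connected modulo ${\mathscr H}_\mathfrak{P}(X)$, i.e.\ connected modulo $\mathfrak{P}$, completing the proof. There is no genuine obstacle here; the only point requiring care is that Lemma \ref{KLJK} is invoked separately for the pieces $X_i$ and for their intersection $A$, and both invocations are legitimate precisely because all of these subspaces are closed in $X$. This closedness is what upgrades the abstract restricted-ideal conditions of Theorem \ref{HFDF} into the clean ``connected modulo $\mathfrak{P}$'' and ``non-$\mathfrak{P}$'' conditions stated here.
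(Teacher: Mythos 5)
Your proposal is correct and follows essentially the same route as the paper's own proof: apply Lemma \ref{KLJK} to the closed subspaces $X_i$ and to the closed intersection $A$ to identify the restricted ideals with ${\mathscr H}_\mathfrak{P}(X_i)$ and ${\mathscr H}_\mathfrak{P}(A)$, observe that $A\notin{\mathscr H}_\mathfrak{P}(X)$ since $A$ is closed and non-$\mathfrak{P}$, and then invoke Theorem \ref{HFDF}. Your explicit justification that $\mathrm{cl}_XA=A$ makes the non-membership step slightly more detailed than the paper's, but the argument is identical in substance.
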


\begin{proof}
Since $X_i$ is closed in $X$ for each $i=1,\dots,n$, by Lemma \ref{KLJK} connectedness modulo ${\mathscr H}_\mathfrak{P}(X)|_{X_i}$ coincides with connectedness modulo ${\mathscr H}_\mathfrak{P}(X_i)$. Similarly, if we let \[A=X_1\cap\cdots\cap X_n,\]
then $A$ is closed in $X$ and thus connectedness modulo ${\mathscr H}_\mathfrak{P}(X)|_A$ coincides with connectedness modulo ${\mathscr H}_\mathfrak{P}(A)$. Also, $A$ is not in ${\mathscr H}_\mathfrak{P}(X)$, as $A$ is non-$\mathfrak{P}$. Theorem \ref{HFDF} now concludes the proof.
\end{proof}

In the following example we consider connectedness modulo $\mathfrak{P}$ when $\mathfrak{P}$ is compactness. This, in particular, provides us with an example showing that a space with a dense subspace which is connected modulo $\mathfrak{P}$ is not necessarily connected modulo $\mathfrak{P}$.

\begin{example}\label{JFS}
Let $\mathfrak{P}$ be compactness. It is clear that $\mathfrak{P}$ is closed hereditary and is preserved under finite closed sums of subspaces. Now, suppose that $X$ is a locally compact Hausdorff space. For simplicity of the notation denote ${\mathscr H}_\mathfrak{P}(X)$ by ${\mathscr C}$. In this case the ideal ${\mathscr C}$ consists of subsets $C$ of $X$ whose closure $\mathrm{cl}_XC$ is compact. By Theorem \ref{BBV} the space $X$ is connected modulo compactness if and only if $\beta X\setminus\lambda_{\mathscr C}X$ is connected, where, by definition
\[\lambda_{\mathscr C}X=\bigcup\{\mathrm{int}_{\beta X}\mathrm{cl}_{\beta X}C:C\mbox{ is a compact subspace of }X\}.\]
We check that
\begin{equation}\label{JHSD}
\lambda_{\mathscr C} X=X.
\end{equation}

Note that for a compact subspace $C$ of $X$ the set $\mathrm{int}_{\beta X}\mathrm{cl}_{\beta X}C$ is contained in $\mathrm{cl}_{\beta X}C=C$, and thus, is contained in $X$. Thus $\lambda_{\mathscr C}X$ is contained in $X$. Next, let $x$ be in $X$. Since $X$ is locally compact, there is an open neighborhood $U$ of $x$ in $X$ whose closure $\mathrm{cl}_XU$ is compact. Note that $U$ is open in $\beta X$, as $U$ is open in $X$ and $X$ is open in $\beta X$, since $X$ is locally compact. In particular, $U$ is contained in $\lambda_{\mathscr C}X$, as $U$ is contained in $\mathrm{int}_{\beta X}\mathrm{cl}_{\beta X}U$ and the latter is contained in $\lambda_{\mathscr C}X$ by its definition. Therefore $x$ is in $\lambda_{\mathscr C}X$. Thus $X$ is contained in $\lambda_{\mathscr C}X$. This shows (\ref{JHSD}).

That is, a locally compact Hausdorff space $X$ is connected modulo compactness if and only if $\beta X\setminus X$ is connected. This will be used in the following.

Let $n$ be a positive integer. It is known that the remainder $\beta\mathbb{R}^n\setminus\mathbb{R}^n$ is connected if and only if $n\geq2$. (See Exercise 6.L of \cite{GJ}.) Therefore, by the above discussion, the Euclidean space $\mathbb{R}^n$ is connected modulo compactness if and only if $n\geq2$.

We now construct an example of a space $Y$ which contains a dense subspace $X$ such that $X$ is connected modulo compactness, while the space $Y$ is not. Let
\[Y=\mathbb{R}\times[0,1],\]
considered as a subspace of $\mathbb{R}^2$, and let
\[X=\mathbb{R}\times(0,1).\]
Then $X$ is clearly dense in $Y$, and it is connected modulo compactness, as it is homeomorphic to $\mathbb{R}^2$. The space $Y$, however, is not connected modulo compactness; the mapping $f:Y\rightarrow[0,1]$ defined by
\[f(s,t)=\max\big\{0,\min\{s,1\}\big\}\]
is continuous and is $2$-valued modulo compactness. Indeed,
\[f^{-1}(0)=(-\infty,0]\times[0,1]\quad\text{and}\quad f^{-1}(1)=[1,\infty)\times[0,1]\]
and
\[Y\setminus\big(f^{-1}(0)\cup f^{-1}(1)\big)=(0,1)\times[0,1],\]
as one can easily check.
\end{example}

As it is pointed out in the following example the class of topological properties which satisfy the assumptions of the theorems in this subsection is quite wide and include almost all the so called ``covering properties''. (This class of topological properties has been also considered in \cite{Ko4}--\cite{Ko7}.)

\begin{example}\label{20UIHG}
Let $\mathfrak{P}$ be either compactness, the Lindel\"{o}f property, countable compactness, paracompactness, metacompactness, countable paracompactness, subparacompactness, $\theta$-refinability (or submetacompactness), the $\sigma$-para-Lindel\"{o}f property or $\omega$-boundedness. Then $\mathfrak{P}$ is a closed hereditary topological property which is preserved under finite closed sums of subspaces and is both preserved and inversely preserved under perfect continuous mappings. (See \cite{Bu}, \cite{Steph} and \cite{Va}.)
\end{example}

In \cite{Kou12}, for a topological property $\mathfrak{P}$ which is closed hereditary and preserved under finite closed sums of subspaces, we have called a space $X$ to be ``connected modulo $\mathfrak{P}$'' provided that there is no pair of disjoint cozero-sets $C$ and $D$ of $X$ such that the  closures in $X$ of neither has $\mathfrak{P}$, and $X\setminus(C\cup D)$ is contained in a cozero-set of $X$ whose closure in $X$ has $\mathfrak{P}$. In the following theorem we show that the notion ``connectedness modulo a topological property'' derived here from the more general notion ``connectedness modulo an ideal'' coincides with the one we have already introduced and studied in \cite{Kou12}.

\begin{theorem}\label{JJH}
Let $\mathfrak{P}$ be a closed hereditary topological property which is preserved under finite closed sums of subspaces. For a space $X$ the following are equivalent.
\begin{itemize}
\item[\rm(1)] $X$ is connected modulo $\mathfrak{P}$.
\item[\rm(2)] There is no pair $C$ and $D$ of disjoint cozero-sets of $X$ such that
\begin{itemize}
  \item $\mathrm{cl}_XC$ and $\mathrm{cl}_XD$ neither has $\mathfrak{P}$.
  \item $X\setminus(C\cup D)$ is contained in a cozero-set $E$ of $X$ such that $\mathrm{cl}_XE$ has $\mathfrak{P}$.
\end{itemize}
\end{itemize}
\end{theorem}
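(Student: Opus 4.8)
The plan is to compare the two notions through their negations. By definition, $X$ is connected modulo $\mathfrak{P}$ exactly when it is connected modulo the ideal ${\mathscr H}_\mathfrak{P}(X)=\{H\subseteq X:\mathrm{cl}_XH\text{ has }\mathfrak{P}\}$, so (1) fails precisely when there is a continuous $f:X\to[0,1]$ that is $2$-valued modulo ${\mathscr H}_\mathfrak{P}(X)$. I would first record the translation: since $f^{-1}(0)$ and $f^{-1}(1)$ are zero-sets, hence closed, the condition $f^{-1}(0),f^{-1}(1)\notin{\mathscr H}_\mathfrak{P}(X)$ means exactly that these closed sets are non-$\mathfrak{P}$, while $f^{-1}((0,1))\in{\mathscr H}_\mathfrak{P}(X)$ means $\mathrm{cl}_Xf^{-1}((0,1))$ has $\mathfrak{P}$. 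Thus the task is to match "a $2$-valued-modulo-${\mathscr H}_\mathfrak{P}(X)$ map exists" with "a triple of cozero-sets as in (2) exists."

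For the direction $\neg(1)\Rightarrow\neg(2)$ I would start from such an $f$ and set $C=f^{-1}([0,1/3))$ and $D=f^{-1}((2/3,1])$, which are disjoint cozero-sets, together with the cozero-set $E=f^{-1}((1/4,3/4))$, which contains $X\setminus(C\cup D)=f^{-1}([1/3,2/3])$. Because $f^{-1}(0)$ is a closed non-$\mathfrak{P}$ subset of $\mathrm{cl}_XC$, closed-hereditariness forces $\mathrm{cl}_XC$ to be non-$\mathfrak{P}$, and symmetrically for $D$; meanwhile $\mathrm{cl}_XE\subseteq\mathrm{cl}_Xf^{-1}((0,1))$ is a closed subspace of a $\mathfrak{P}$-space, hence has $\mathfrak{P}$. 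This direction is routine bookkeeping.

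The converse $\neg(2)\Rightarrow\neg(1)$ is the crux. Given disjoint cozero-sets $C=\mathrm{coz}(g)$ and $D=\mathrm{coz}(h)$ with non-$\mathfrak{P}$ closures, and a cozero-set $E=\mathrm{coz}(k)\supseteq X\setminus(C\cup D)$ with $\mathrm{cl}_XE$ having $\mathfrak{P}$ (all three functions into $[0,1]$), I cannot simply separate $C$ and $D$, since disjoint cozero-sets need not be completely separated and $X$ is not assumed normal. Instead I would write down the explicit function
\[f=\frac{2h+k}{2(g+h+k)}.\]
The denominator is positive everywhere (on $X\setminus E$ one of $g,h$ is positive since $X\setminus(C\cup D)\subseteq E$, and on $E$ one has $k>0$), so $f$ is continuous. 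Using $gh\equiv0$, which holds because $C\cap D=\emptyset$, a direct check gives $f^{-1}(0)=C\setminus E$, $f^{-1}(1)=D\setminus E$, and $f^{-1}((0,1))=E$; in particular $\mathrm{cl}_Xf^{-1}((0,1))=\mathrm{cl}_XE$ has $\mathfrak{P}$.

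It then remains to see that the zero-set $C\setminus E$ is non-$\mathfrak{P}$ (and symmetrically $D\setminus E$), which is where both hypotheses on $\mathfrak{P}$ are really used. Writing $C=(C\setminus E)\cup(C\cap E)$ gives $\mathrm{cl}_XC\subseteq\mathrm{cl}_X(C\setminus E)\cup\mathrm{cl}_XE$; if $C\setminus E$ had $\mathfrak{P}$, this union of two closed $\mathfrak{P}$-subspaces would have $\mathfrak{P}$ by preservation under finite closed sums, and hence so would its closed subspace $\mathrm{cl}_XC$ by closed-hereditariness, contradicting the choice of $C$. Therefore $f$ is $2$-valued modulo ${\mathscr H}_\mathfrak{P}(X)$, witnessing $\neg(1)$. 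I expect this final construction — exhibiting $f$ explicitly rather than by extension, and running the finite-closed-sum/hereditary argument — to be the main obstacle; the forward direction is comparatively immediate.
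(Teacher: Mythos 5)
Your proof is correct, and both directions share the paper's overall architecture (contraposition, with the hard direction $\neg(2)\Rightarrow\neg(1)$ settled by an explicit rational-function witness), but your witness is constructed genuinely differently. The paper writes $C=\mathrm{Coz}(f)$, $D=\mathrm{Coz}(g)$, first manufactures an auxiliary map $h$ with $h^{-1}(0)=X\setminus E$ and $h^{-1}(1)=X\setminus(C\cup D)$ exactly (using the standard fact that two disjoint zero-sets are the $0$- and $1$-level sets of a single map), and then takes $k=f/(f+g+h)$; this yields only the containments $D\subseteq k^{-1}(0)$, $C\setminus E\subseteq k^{-1}(1)$ and $X\setminus(k^{-1}(0)\cup k^{-1}(1))\subseteq C\cap E$, which are then pushed through the ideal ${\mathscr H}_\mathfrak{P}(X)$. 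Your single formula $(2h+k)/(2(g+h+k))$ avoids the auxiliary separation step entirely---the cozero function of $E$ itself plays that role---and produces exact identifications $f^{-1}(0)=C\setminus E$, $f^{-1}(1)=D\setminus E$, $f^{-1}((0,1))=E$, so the membership of the middle set in ${\mathscr H}_\mathfrak{P}(X)$ is immediate rather than needing the estimate through $C\cap E$. The concluding step is the same mathematics in two guises: the paper derives the contradiction from $k^{-1}(1)\in{\mathscr H}_\mathfrak{P}(X)$ by invoking closure of ${\mathscr H}_\mathfrak{P}(X)$ under finite unions (where closed-hereditariness and finite closed sums are hidden, via Lemma \ref{JKBG}), while you run the finite-closed-sums and closed-hereditary argument directly on $\mathrm{cl}_X(C\setminus E)\cup\mathrm{cl}_XE\supseteq\mathrm{cl}_XC$. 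What your route buys is cleaner bookkeeping and a self-contained formula; what the paper's buys is robustness, since containments of level sets suffice and nothing need be computed exactly. The forward direction is routine in both treatments (the paper cuts at $1/2$ and takes $E=f^{-1}((0,1))$; your cuts at $1/3$, $2/3$ with $E=f^{-1}((1/4,3/4))$ work equally well).
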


\begin{proof}
(2) \textit{implies} (1). Suppose that $X$ is not connected modulo $\mathfrak{P}$. There is a continuous mapping $f:X\rightarrow[0,1]$ which is $2$-valued modulo ${\mathscr H}_\mathfrak{P}(X)$. That is, if
\[E=X\setminus\big(f^{-1}(0)\cup f^{-1}(1)\big),\]
then the closure in $X$ of $E$ has $\mathfrak{P}$, while neither of $f^{-1}(0)$ and $f^{-1}(1)$ has $\mathfrak{P}$. Let
\[C=f^{-1}\big([0,1/2)\big)\quad\text{and}\quad D=f^{-1}\big((1/2,1]\big).\]
Then $C$ and $D$ are disjoint cozero-sets of $X$, and the closure in $X$ of neither has $\mathfrak{P}$, as $C$ and $D$ contain $f^{-1}(0)$ and $f^{-1}(1)$, respectively, and $\mathfrak{P}$ is closed hereditary. Also, $E$ is a cozero-set in $X$ and
\[X\setminus(C\cup D)=f^{-1}(1/2)\subseteq E.\]

(1) \textit{implies} (2). Suppose that there is a pair of disjoint cozero-sets $C$ and $D$ of $X$ such that the closures in $X$ of neither has $\mathfrak{P}$ and
\begin{equation}\label{KJF}
X\setminus(C\cup D)\subseteq E,
\end{equation}
where $E$ is a cozero-set of $X$ whose closure in $X$ has $\mathfrak{P}$. It is clear that $E$ is in ${\mathscr H}_\mathfrak{P}(X)$ while neither of $C$ and $D$ is so. Let $f,g:X\rightarrow[0,1]$ be continuous mappings such that
\begin{equation}\label{JIHF}
C=\mathrm{Coz}(f)\quad\text{and}\quad D=\mathrm{Coz}(g).
\end{equation}
Note that $X\setminus(C\cup D)$ and $X\setminus E$ are zero-sets in $X$, and they are disjoint by (\ref{KJF}). Thus, there exists a continuous mapping $h:X\rightarrow[0,1]$ such that
\[h^{-1}(0)=X\setminus E\quad\text{and}\quad h^{-1}(1)=X\setminus(C\cup D).\]
Define a mapping $k:X\rightarrow[0,1]$ by
\[k=\frac{f}{f+g+h}.\]
We show that $k$ is a continuous mapping which is $2$-valued modulo ${\mathscr H}_\mathfrak{P}(X)$.

First, we need to check that $k$ is well defined. Let $x$ be in $X$. Note that
\[f(x)+g(x)+h(x)\neq 0,\]
if either $f(x)\neq 0$ or $g(x)\neq 0$. If both $f(x)=0$ and $g(x)=0$, then $x$ is in neither of $\mathrm{Coz}(f)$ and $\mathrm{Coz}(g)$, and thus $x$ is in $E$ by (\ref{KJF}) and (\ref{JIHF}). But then $h(x)\neq 0$ by the definition of $h$, and thus $f(x)+g(x)+h(x)\neq 0$. This shows that $k$ is well defined.

It is clear that $0\leq k(x)\leq1$ for every $x$ in $X$, and that $k$ is continuous.

Note that $\mathrm{Z}(f)$ is contained in $k^{-1}(0)$ by the definition of $k$. But, since $C$ and $D$ are disjoint, by (\ref{JIHF}) we have
\[D\subseteq X\setminus C=X\setminus\mathrm{Coz}(f)=\mathrm{Z}(f).\]
Thus $D$ is contained in $k^{-1}(0)$. Since $D$ is not in ${\mathscr H}_\mathfrak{P}(X)$, the set $k^{-1}(0)$ is not in ${\mathscr H}_\mathfrak{P}(X)$ either.

Before we show that $k^{-1}(1)$ is not in ${\mathscr H}_\mathfrak{P}(X)$, we check that
\begin{equation}\label{HJHH}
C\setminus E\subseteq k^{-1}(1).
\end{equation}
Let $x$ be in $C\setminus E$. Then $x$ is not in $D$, as $x$ is in $C$, and $C$ and $D$ are disjoint. Thus $g(x)=0$ by (\ref{JIHF}). Also, $h(x)=0$ by the definition of $h$, as $x$ is not in $E$. Therefore, $k(x)=1$ by the definition of $k$.

To show that $h^{-1}(1)$ is not in ${\mathscr H}_\mathfrak{P}(X)$, suppose the contrary. Then $C\setminus E$ is in ${\mathscr H}_\mathfrak{P}(X)$ by (\ref{HJHH}). But $E$ is in ${\mathscr H}_\mathfrak{P}(X)$, therefore
\[C\cup E=(C\setminus E)\cup E\]
is in ${\mathscr H}_\mathfrak{P}(X)$. But then $C$ is in ${\mathscr H}_\mathfrak{P}(X)$, as it is contained in $C\cup E$. This is a contradiction.

It remains to show that $X\setminus(k^{-1}(0)\cup k^{-1}(1))$ is in ${\mathscr H}_\mathfrak{P}(X)$. Let $x$ be in $X$. Then by the definition of $k$, we have $0<k(x)<1$ if and only if we have both $f(x)\neq0$ and $g(x)+h(x)\neq0$. Note that $\mathrm{Coz}(h)=E$ by the way $h$ is defined. Therefore, using (\ref{JIHF}), we have
\begin{eqnarray*}
X\setminus\big(k^{-1}(0)\cup k^{-1}(1)\big)&\subseteq&\mathrm{Coz}(f)\cap\mathrm{Coz}(g+h)\\&=&\mathrm{Coz}(f)\cap\big(\mathrm{Coz}(g)\cup\mathrm{Coz}(h)\big)=C\cap(D\cup E).
\end{eqnarray*}
Note that
\[C\cap(D\cup E)=C\cap E,\]
as $C$ and $D$ are disjoint. Thus $X\setminus(k^{-1}(0)\cup k^{-1}(1))$ is contained in $E$, and is therefore in ${\mathscr H}_\mathfrak{P}(X)$, as $E$ is so.
\end{proof}

\subsection{The case of pseudocompactness}\label{JJNBBV}

In this subsection we will be dealing with pseudocompactness. Recall that a completely regular space $X$ is said to be \textit{pseudocompact} if every continuous mapping $f:X\rightarrow\mathbb{R}$ is bounded. Observe that pseudocompactness is not a closed hereditary topological property (although it is preserved under finite closed sums of subspaces). Thus, in particular, the results of this subsection do not follow from those of Subsection \ref{HJHGF}.

Theorems \ref{HJHD} and \ref{KHG} may also be found in \cite{Kou12}; they have been derived here, however, as corollaries of our general study of connectedness modulo an ideal.

\begin{definition}\label{JHG}
For a completely regular space $X$ define
\[{\mathscr U}_X=\langle U:\mbox{$U$ is an open subset of $X$ whose closure is pseudocompact}\rangle.\]
\end{definition}

\begin{definition}\label{JG}
A completely regular space $X$ is said to be \textit{connected modulo pseudocompactness} if it is connected modulo the ideal ${\mathscr U}_X$ of $X$.
\end{definition}

It is clear that if a space $X$ is pseudocompact then it is connected modulo pseudocompactness, as in this case $X$ itself is in ${\mathscr U}_X$, which implies the non-existence of any mapping on $X$ which is $2$-valued modulo ${\mathscr U}_X$.

In the next theorem we study how connectedness modulo pseudocompactness is preserved under a certain class of continuous mappings. We use the following simple observation.

\begin{lemma}\label{KJKJ}
Let $X$ be a completely regular space. Then
\[{\mathscr U}_X=\{A:\mbox{$A\subseteq U$ where $U$ is an open subset of $X$ whose closure is pseudocompact}\}.\]
\end{lemma}

\begin{proof}
It is clear that a subset of $X$ is in ${\mathscr U}_X$ if it is contained in an open subset $U$ of $X$ whose closure in $X$ is pseudocompact.

Let $A$ be in ${\mathscr U}_X$. Then $A$ is contained in a finite union $U=U_1\cup\cdots\cup U_n$, where $U_i$ is an open subset of $X$ whose closure in $X$ is pseudocompact for each $i=1,\ldots,n$. Observe that $U$ is an open subset of $X$ and
\[\mathrm{cl}_XU=\mathrm{cl}_XU_1\cup\cdots\cup\mathrm{cl}_XU_n\]
is pseudocompact, as it is a finite union of pseudocompact subspaces.
\end{proof}

A \textit{regular closed} subset in a space is a set which is identical to the closure of its interior. Thus, regular closed subsets of a space are closures of open subsets. It is known that pseudocompactness is hereditary with respect to regular closed subsets and is inversely preserved with respect to perfect open continuous surjections.

\begin{theorem}\label{HJHD}
Connectedness modulo pseudocompactness is preserved under perfect open continuous surjections.
\end{theorem}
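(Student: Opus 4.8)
The plan is to reduce the statement to the identity
\[
\mathscr{U}_X=\phi^{-1}(\mathscr{U}_Y),
\]
exactly as in the proof of Theorem \ref{JHFD}. Indeed, suppose $\phi:X\to Y$ is a perfect open continuous surjection with $X$ connected modulo pseudocompactness, that is, connected modulo $\mathscr{U}_X$. To conclude that $Y$ is connected modulo $\mathscr{U}_Y$ it suffices, by Theorem \ref{KJG}, to show that $X$ is connected modulo $\phi^{-1}(\mathscr{U}_Y)$; once the displayed identity is established this is immediate, since $X$ is connected modulo $\mathscr{U}_X$ by hypothesis. Thus the whole proof comes down to verifying the two inclusions defining that identity, and throughout I would use the characterization of $\mathscr{U}_X$ (and of $\mathscr{U}_Y$) furnished by Lemma \ref{KJKJ}: a set lies in $\mathscr{U}_X$ precisely when it is contained in an open subset of $X$ with pseudocompact closure.

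For the inclusion $\mathscr{U}_X\subseteq\phi^{-1}(\mathscr{U}_Y)$, take $A\in\mathscr{U}_X$, so $A\subseteq U$ with $U$ open in $X$ and $\mathrm{cl}_XU$ pseudocompact. Since $\phi$ is open, $\phi(U)$ is open in $Y$, and since $\phi$ is a closed continuous map I would argue that $\mathrm{cl}_Y\phi(U)=\phi(\mathrm{cl}_XU)$: the inclusion $\phi(\mathrm{cl}_XU)\subseteq\mathrm{cl}_Y\phi(U)$ is continuity, while the reverse holds because $\phi(\mathrm{cl}_XU)$ is closed (as $\phi$ is closed) and contains $\phi(U)$. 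As $\phi(\mathrm{cl}_XU)$ is a continuous surjective image of the pseudocompact space $\mathrm{cl}_XU$, it is pseudocompact (continuous surjective images of pseudocompact spaces are pseudocompact). Hence $\phi(U)$ is open in $Y$ with pseudocompact closure and contains $\phi(A)$, so $\phi(A)\in\mathscr{U}_Y$ and therefore $A\in\phi^{-1}(\mathscr{U}_Y)$.

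For the reverse inclusion $\phi^{-1}(\mathscr{U}_Y)\subseteq\mathscr{U}_X$, take $B\in\phi^{-1}(\mathscr{U}_Y)$, so $\phi(B)\subseteq V$ with $V$ open in $Y$ and $\mathrm{cl}_YV$ pseudocompact. Then $B\subseteq\phi^{-1}(V)$, and $\phi^{-1}(V)$ is open in $X$; the task is to show its closure is pseudocompact. I would restrict $\phi$ to the full preimage $F=\phi^{-1}(\mathrm{cl}_YV)$, obtaining a map $\phi|_F:F\to\mathrm{cl}_YV$ which is again a perfect open continuous surjection (restriction of a perfect map to a full preimage is perfect, and the identity $\phi(G\cap F)=\phi(G)\cap\mathrm{cl}_YV$ for $G$ open in $X$ shows it is open onto $\mathrm{cl}_YV$). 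Since pseudocompactness is inversely preserved under perfect open continuous surjections and $\mathrm{cl}_YV$ is pseudocompact, $F$ is pseudocompact. Finally $\mathrm{cl}_X\phi^{-1}(V)=\mathrm{cl}_F\phi^{-1}(V)$, because $F$ is closed in $X$ and contains $\phi^{-1}(V)$, and being the closure in $F$ of a set open in $F$ it is a regular closed subset of the pseudocompact space $F$; as pseudocompactness is hereditary with respect to regular closed subsets, $\mathrm{cl}_X\phi^{-1}(V)$ is pseudocompact. Thus $B\subseteq\phi^{-1}(V)$ with $\phi^{-1}(V)$ open and of pseudocompact closure, so $B\in\mathscr{U}_X$ by Lemma \ref{KJKJ}.

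The main obstacle, and the reason this does not simply follow from Theorem \ref{JHFD}, is that pseudocompactness fails to be closed hereditary; the delicate point is the second inclusion, where one cannot pass to $\mathrm{cl}_X\phi^{-1}(V)$ directly but must first produce, via the inverse-preservation property, a pseudocompact closed set $F$ containing it, and then invoke heredity with respect to regular closed sets, using that $\mathrm{cl}_X\phi^{-1}(V)$ is the closure of an open subset of $F$.
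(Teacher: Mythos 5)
Your proof is correct and takes essentially the same route as the paper's: reduction via Theorem \ref{KJG} to the identity $\mathscr{U}_X=\phi^{-1}(\mathscr{U}_{\,Y})$, verified through Lemma \ref{KJKJ}, using preservation of pseudocompactness under continuous images in one direction and, in the other, inverse preservation under perfect open continuous surjections together with heredity of pseudocompactness with respect to regular closed subsets. The only (harmless) deviations are that you prove the equality $\mathrm{cl}_Y\phi(U)=\phi(\mathrm{cl}_XU)$ directly where the paper settles for an inclusion plus regular-closed heredity, and that you omit the paper's one-line observation that complete regularity of $Y$ (needed for $\mathscr{U}_{\,Y}$ to be defined at all) is itself preserved under perfect open continuous surjections.
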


\begin{proof}
Let $\phi:X\rightarrow Y$ be a perfect open continuous mapping from a space $X$ which is connected modulo pseudocompactness onto a space $Y$.
We show that $Y$ is connected modulo pseudocompactness, that is, $Y$ is a completely regular space which is connected modulo the ideal ${\mathscr U}_{\,Y}$. Observe that complete regularity is preserved under perfect open continuous surjections. Thus, to prove this, by Theorem \ref{KJG}, it suffices to prove that $X$ is connected modulo $\phi^{-1}({\mathscr U}_{\,Y})$. We prove the latter by verifying that
\begin{equation}\label{JKHJ}
{\mathscr U}_X=\phi^{-1}({\mathscr U}_{\,Y}).
\end{equation}
Since $X$ is connected modulo ${\mathscr U}_X$, this will then prove the theorem.

Let $A$ be in ${\mathscr U}_X$. By Lemma \ref{KJKJ} the set $A$ is contained in an open subset $U$ of $X$ whose closure $\mathrm{cl}_XU$ is pseudocompact. It is clear that $\phi(A)$ is then contained in $\phi(U)$. Note that $\phi(U)$ is an open subset of $Y$, as $\phi$ is an open mapping. To show that $\phi(A)$ is in ${\mathscr U}_{\,Y}$, by Lemma \ref{KJKJ}, it then suffices to show that $\mathrm{cl}_Y\phi(U)$ is pseudocompact. Note that $\phi(\mathrm{cl}_XU)$ is pseudocompact, as it is a continuous image of $\mathrm{cl}_XU$, and the latter is so. Observe that $\phi(\mathrm{cl}_XU)$ contains $\mathrm{cl}_Y\phi(U)$, as $\phi$ is closed, since it is perfect. But then $\mathrm{cl}_Y\phi(U)$ is pseudocompact, as it is regular closed in the pseudocompact space $\phi(\mathrm{cl}_XU)$ and pseudocompactness is hereditary with respect to regular closed subsets.

Next, let $B$ be in $\phi^{-1}({\mathscr U}_{\,Y})$. Then $\phi(B)$ is in ${\mathscr U}_{\,Y}$ and thus by Lemma \ref{KJKJ} the set $\phi(B)$ is contained in an open subset $V$ of $Y$ whose closure $\mathrm{cl}_YV$ is pseudocompact. Note that
\[B\subseteq\phi^{-1}\big(\phi(B)\big)\subseteq\phi^{-1}(V),\]
and that $\phi^{-1}(V)$ is an open subset of $X$. We show that $\mathrm{cl}_X\phi^{-1}(V)$ is pseudocompact; this will then, by Lemma \ref{KJKJ}, prove that $B$ is in ${\mathscr U}_X$. Note that pseudocompactness is inversely preserved under perfect open continuous surjections. The mapping
\[\psi=\phi|_{\phi^{-1}(\mathrm{cl}_YV)}:\phi^{-1}(\mathrm{cl}_YV)\longrightarrow\mathrm{cl}_YV\]
is continuous, perfect, open and surjective, as $\phi$ is so. Therefore $\phi^{-1}(\mathrm{cl}_YV)$ is pseudocompact, as it is the inverse image of the pseudocompact space $\mathrm{cl}_YV$ under $\psi$. But then $\mathrm{cl}_X\phi^{-1}(V)$ is pseudocompact, as it is regular closed in $\phi(\mathrm{cl}_YV)$.
\end{proof}

\begin{remark}
Theorem \ref{HJHD} may also be proved using the characterization we will give in Theorem \ref{KHG} (as we will prove its dual result, Theorem \ref{JJGHF}, by means of the characterization given in Theorem \ref{HGFDF}). The present proof of Theorem \ref{HJHD} is to demonstrate a direct application of our general result given in Theorem \ref{KJG}.
\end{remark}

The following provides a characterization for connectedness modulo pseudocompactness of a space in terms of connectedness of a familiar subspace of its Stone--\v{C}ech compactification.

We will need to use the following result due to A.W. Hager and D.G. Johnson in \cite{HJ}. (See also \cite{C} or Theorem 11.24 of \cite{We} for a proof.)

\begin{lemma}[Hager--Johnson \cite{HJ}]\label{A}
For an open subset $U$ of a completely regular space $X$, if $\mathrm{cl}_{\beta X} U$ is contained in $\upsilon X$ then $\mathrm{cl}_X U$ is pseudocompact.
\end{lemma}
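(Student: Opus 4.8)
The plan is to argue by contradiction, manufacturing from the failure of pseudocompactness of $\mathrm{cl}_X U$ a point of $\mathrm{cl}_{\beta X} U$ that escapes $\upsilon X$. Write $W=\mathrm{cl}_X U$ and suppose $W$ is not pseudocompact. Since $W$ is completely regular, the standard characterization of pseudocompactness (every locally finite family of nonempty open subsets is finite) yields an infinite locally finite family of nonempty open subsets of $W$; passing to a countable subfamily and replacing each member by its intersection with the dense open set $U$, I may assume I have a countable family $\{V_n\}$ of nonempty open subsets of $X$ with $V_n\subseteq U$ which is locally finite in $W$. (Each $V_n\cap U$ is nonempty because $U$ is dense in $W$, and is open in $X$ because it is open in $U$, which is open in $X$.)

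First I would upgrade this to local finiteness in $X$. Because each $V_n$ lies in the closed set $W$, any point of $X\setminus W$ has the open neighborhood $X\setminus W$ meeting no $V_n$, while for a point of $W$ an $X$-neighborhood $O$ satisfies $O\cap V_n=(O\cap W)\cap V_n$, so $O$ meets only the finitely many $V_n$ that the $W$-neighborhood $O\cap W$ meets. Hence $\{V_n\}$ is locally finite in $X$. A routine recursion, using that each point of $X$ lies in only finitely many $V_n$, lets me choose pairwise distinct points $x_n\in V_n$; the resulting set $\{x_n\}$ is then closed and discrete in $X$.

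Next I would locate the offending point. Being infinite, $\{x_n\}$ has a cluster point $p$ in the compact space $\beta X$; since $\{x_n\}$ is closed and discrete in $X$ it has no cluster point in $X$, so $p\in\beta X\setminus X$. As $x_n\in U$ we get $p\in\mathrm{cl}_{\beta X}\{x_n\}\subseteq\mathrm{cl}_{\beta X}U\subseteq\upsilon X$ by hypothesis, whence $p\in\upsilon X\setminus X$. Using complete regularity and the local finiteness of $\{V_n\}$ in $X$, I would build a continuous $f:X\rightarrow[0,\infty)$ by setting $f=\sum_n n\,g_n$, where $g_n:X\rightarrow[0,1]$ is continuous with $g_n(x_n)=1$ and $g_n\equiv 0$ off $V_n$; local finiteness makes the sum locally a finite sum, hence continuous, and $f(x_n)\geq n$.

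Finally I would derive the contradiction. Every continuous real-valued function on $X$ extends continuously over $\upsilon X$, so let $f^\upsilon:\upsilon X\rightarrow\mathbb{R}$ extend $f$. Since $p$ is a cluster point of $\{x_n\}$, every neighborhood of $p$ in $\upsilon X$ contains infinitely many $x_n$; applying this to the neighborhood of $p$ on which $f^\upsilon$ differs from $f^\upsilon(p)$ by less than $1$ forces $f(x_n)<f^\upsilon(p)+1$ for infinitely many $n$, contradicting $f(x_n)\geq n\rightarrow\infty$. Hence $\mathrm{cl}_X U$ is pseudocompact. I expect the main friction to be bookkeeping rather than conceptual: checking that local finiteness passes from $W$ to $X$, that the representatives $x_n$ may be taken distinct (so that $\{x_n\}$ is closed discrete), and that any cluster point of $\{x_n\}$ in $\beta X$ must leave $X$; once these are in place, the extension-to-$\upsilon X$ step delivers the contradiction immediately.
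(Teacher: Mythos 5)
Your proof is correct, but there is no in-paper argument to compare it against: the paper deliberately does not prove this lemma, stating it as a known result of Hager--Johnson and pointing to \cite{HJ}, \cite{C}, and Theorem 11.24 of \cite{We} for proofs. What you have produced is, in essence, the classical argument from those sources: pass to the contrapositive, use the Tychonoff characterization of pseudocompactness via locally finite families of nonempty open sets, intersect the family with $U$ so its members become open in $X$, use closedness of $\mathrm{cl}_XU$ in $X$ to upgrade local finiteness from $\mathrm{cl}_XU$ to $X$, extract a closed discrete sequence $(x_n)$ in $U$, and build the locally finite sum $f=\sum_n n\,g_n$; any cluster point $p$ of $(x_n)$ in $\beta X$ then lies in $\mathrm{cl}_{\beta X}U\setminus X$, and continuity at $p$ of the extension of $f$ over $\upsilon X$ is incompatible with $f(x_n)\ge n$, so $p\notin\upsilon X$. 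All the steps you flag as ``friction'' do go through, with one small repair: the greedy recursion choosing pairwise distinct $x_n\in V_n$ for \emph{every} $n$ can fail (some later $V_n$ could be a singleton whose unique point was already chosen), so you should instead pass to an infinite subfamily: since the family is point-finite, any finite set of already-chosen points meets only finitely many members, so one can recursively select indices $n_1<n_2<\cdots$ and points $x_k\in V_{n_k}$ avoiding all earlier choices; local finiteness and the divergence $f(x_k)\ge n_k\ge k$ survive this passage, so nothing else changes. A cosmetic alternative for the endgame is to quote the standard description of $\upsilon X$ as the set of points of $\beta X$ at which the extension of every member of $C(X)$ (to the one-point compactification of $\mathbb{R}$) is finite, but your direct continuity argument at $p$ is equally valid.
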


The converse of Lemma \ref{A} holds as well, as we now easily check.

\begin{lemma}\label{ERS}
For an open subset $U$ of a completely regular space $X$ the closure $\mathrm{cl}_X U$ is pseudocompact if and only if $\mathrm{cl}_{\beta X} U$ is contained in $\upsilon X$.
\end{lemma}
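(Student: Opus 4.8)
The plan is to notice that Lemma \ref{A} already supplies one implication — if $\mathrm{cl}_{\beta X}U\subseteq\upsilon X$ then $\mathrm{cl}_XU$ is pseudocompact — so only the converse requires proof. That is, assuming $\mathrm{cl}_XU$ is pseudocompact, I would show $\mathrm{cl}_{\beta X}U\subseteq\upsilon X$. The single external tool I would invoke is the standard description of the Hewitt realcompactification as a subspace of $\beta X$ (see \cite{GJ} or \cite{PW}): a point $p$ of $\beta X$ lies in $\upsilon X$ if and only if every zero-set of $\beta X$ containing $p$ meets $X$.

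First I would argue by contradiction, selecting a point $p\in\mathrm{cl}_{\beta X}U$ with $p\notin\upsilon X$. By the characterization just recalled there is a continuous mapping $g:\beta X\rightarrow[0,1]$ with $g(p)=0$ and $g^{-1}(0)\cap X=\emptyset$. In particular $g$ is strictly positive at every point of $X$, and hence on the subspace $\mathrm{cl}_XU\subseteq X$.

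Next I would manufacture an unbounded continuous function on $\mathrm{cl}_XU$. Since $g$ is positive on $\mathrm{cl}_XU$, the reciprocal $h=1/g$ restricts to a continuous real-valued mapping on $\mathrm{cl}_XU$. To see $h$ is unbounded I would exploit $p\in\mathrm{cl}_{\beta X}U$: for each $\varepsilon>0$ the open set $\{q\in\beta X:g(q)<\varepsilon\}$ contains $p$ and therefore meets $U$, producing a point $x\in U$ with $0<g(x)<\varepsilon$, whence $h(x)>1/\varepsilon$. Letting $\varepsilon$ tend to $0$ shows $h$ is unbounded on $U\subseteq\mathrm{cl}_XU$, which contradicts the pseudocompactness of $\mathrm{cl}_XU$. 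This contradiction forces $\mathrm{cl}_{\beta X}U\subseteq\upsilon X$ and finishes the converse.

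I do not expect a genuine obstacle here; the only point demanding care is invoking the correct characterization of $\upsilon X$ and checking that the zero-set $g^{-1}(0)$ avoids all of $X$ (not merely $U$), since it is precisely this that guarantees $g>0$, and hence the continuity of $1/g$, on the whole of $\mathrm{cl}_XU$. Pseudocompactness then enters only in its defining form — boundedness of continuous real-valued functions — so the argument is as short as the phrase ``as we now easily check'' suggests.
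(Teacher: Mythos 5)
Your proof is correct, but it takes a genuinely different route from the paper's. The paper proves this direction structurally: since $\mathrm{cl}_X U$ is pseudocompact and dense in $\mathrm{cl}_{\upsilon X}U$, that closure is pseudocompact; being closed in $\upsilon X$ it is also realcompact; and realcompact pseudocompact spaces are compact, so $\mathrm{cl}_{\upsilon X}U$ is a compact set containing $U$, hence contains $\mathrm{cl}_{\beta X}U$ and lies inside $\upsilon X$. You instead argue by contradiction at a single point $p\in\mathrm{cl}_{\beta X}U\setminus\upsilon X$, invoke the zero-set description of $\beta X\setminus\upsilon X$ (a fact the paper itself recalls later, just before Theorem \ref{JJGHF}), and manufacture the unbounded continuous function $1/g$ on $\mathrm{cl}_X U$, contradicting pseudocompactness in its defining form; the verification that $g>0$ on all of $X$, and the passage from $p\in\mathrm{cl}_{\beta X}U$ to points of $U$ with arbitrarily small $g$-values, are both sound. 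What each approach buys: yours is more elementary and self-contained, needing only the definition of pseudocompactness plus the standard description of $\upsilon X$, whereas the paper leans on three background facts (a space with a dense pseudocompact subspace is pseudocompact, realcompactness is closed hereditary, and realcompact pseudocompact spaces are compact). The paper's route, however, yields slightly more information along the way: it shows that $\mathrm{cl}_{\upsilon X}U$ is actually compact, hence equals $\mathrm{cl}_{\beta X}U$, which is sharper than mere containment in $\upsilon X$. Note also that neither proof of this direction uses openness of $U$; openness enters only through the Hager--Johnson lemma in the converse direction, which both you and the paper quote rather than prove.
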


\begin{proof}
Necessity follows from Lemma \ref{A}. For sufficiency, note that if $\mathrm{cl}_X U$ is pseudocompact then so is its closure $\mathrm{cl}_{\upsilon X} U$. But $\mathrm{cl}_{\upsilon X} U$ is also realcompact, as it is closed in $\upsilon X$ and realcompactness is closed hereditary. Note that realcompact pseudocompact spaces are compact. Thus $\mathrm{cl}_{\upsilon X} U$ is compact. It is then clear that $\mathrm{cl}_{\upsilon X} U$ contains $\mathrm{cl}_{\beta X} U$ and is contained in $\upsilon X$.
\end{proof}

\begin{theorem}\label{KHG}
For a completely regular space $X$ the following are equivalent.
\begin{itemize}
\item[\rm(1)] $X$ is connected modulo pseudocompactness.
\item[\rm(2)] $\mathrm{cl}_{\beta X}(\beta X\setminus\upsilon X)$ is connected.
\end{itemize}
\end{theorem}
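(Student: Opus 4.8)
The plan is to reduce everything to Theorem \ref{BBV} applied to the ideal $\mathscr{H}=\mathscr{U}_X$, and then to identify the open set $\lambda_{\mathscr{U}_X}X$ explicitly. Concretely, I would prove the set-theoretic identity
\[\beta X\setminus\lambda_{\mathscr{U}_X}X=\mathrm{cl}_{\beta X}(\beta X\setminus\upsilon X),\]
equivalently $\lambda_{\mathscr{U}_X}X=\mathrm{int}_{\beta X}\upsilon X$ (since the complement of an interior is the closure of the complement). Once this is in hand, Theorem \ref{BBV} converts connectedness modulo $\mathscr{U}_X$, that is, connectedness modulo pseudocompactness, into connectedness of $\beta X\setminus\lambda_{\mathscr{U}_X}X=\mathrm{cl}_{\beta X}(\beta X\setminus\upsilon X)$, which is precisely the asserted equivalence.

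For the inclusion $\lambda_{\mathscr{U}_X}X\subseteq\mathrm{int}_{\beta X}\upsilon X$ I would examine a basic piece $\mathrm{int}_{\beta X}\mathrm{cl}_{\beta X}A$ with $\mathrm{cl}_XA\in\mathscr{U}_X$. By Lemma \ref{KJKJ} there is an open $U$ in $X$ with pseudocompact closure and $\mathrm{cl}_XA\subseteq U$; then $\mathrm{cl}_{\beta X}A\subseteq\mathrm{cl}_{\beta X}U$, while Lemma \ref{ERS} gives $\mathrm{cl}_{\beta X}U\subseteq\upsilon X$. Hence every such piece lies in $\upsilon X$, so $\lambda_{\mathscr{U}_X}X\subseteq\upsilon X$; and since $\lambda_{\mathscr{U}_X}X$ is open in $\beta X$ (a union of interiors), it is contained in $\mathrm{int}_{\beta X}\upsilon X$.

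The reverse inclusion is where the real work lies. Given $p\in\mathrm{int}_{\beta X}\upsilon X$, I would use the regularity of the compact Hausdorff space $\beta X$ to choose open sets $V_1,V_2$ in $\beta X$ with
\[p\in V_1\subseteq\mathrm{cl}_{\beta X}V_1\subseteq V_2\subseteq\mathrm{cl}_{\beta X}V_2\subseteq\upsilon X.\]
Setting $A=X\cap V_1$ and $U=X\cap V_2$, and using the density of $X$ in $\beta X$ together with the openness of the $V_i$ (so that $\mathrm{cl}_{\beta X}(X\cap V_i)=\mathrm{cl}_{\beta X}V_i$), I obtain $\mathrm{cl}_XA=X\cap\mathrm{cl}_{\beta X}V_1\subseteq U$ and $\mathrm{cl}_{\beta X}U=\mathrm{cl}_{\beta X}V_2\subseteq\upsilon X$. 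Lemma \ref{ERS} then makes $\mathrm{cl}_XU$ pseudocompact, so $U\in\mathscr{U}_X$, whence $\mathrm{cl}_XA\in\mathscr{U}_X$ by Lemma \ref{KJKJ}. Finally $p\in V_1\subseteq\mathrm{int}_{\beta X}\mathrm{cl}_{\beta X}A$, so $p\in\lambda_{\mathscr{U}_X}X$, giving $\mathrm{int}_{\beta X}\upsilon X\subseteq\lambda_{\mathscr{U}_X}X$.

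I expect the main obstacle to be exactly this reverse inclusion: one must produce a set $A$ whose closure is small (lies in $\mathscr{U}_X$) and yet whose $\beta X$-closure still has $p$ in its interior. The two-step nesting of neighborhoods inside $\upsilon X$, combined with the identity $\mathrm{cl}_{\beta X}(X\cap V)=\mathrm{cl}_{\beta X}V$, is what reconciles these competing demands, and the Hager--Johnson direction of Lemma \ref{ERS} (namely Lemma \ref{A}) supplies the crucial passage from containment in $\upsilon X$ to pseudocompactness of closures in $X$. With the identity for $\lambda_{\mathscr{U}_X}X$ established, the conclusion follows at once from Theorem \ref{BBV}.
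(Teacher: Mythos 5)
Your proposal is correct and follows essentially the same route as the paper's own proof: both establish the identity $\lambda_{\mathscr{U}_X}X=\mathrm{int}_{\beta X}\upsilon X$ via the two-step nesting of open neighborhoods in $\beta X$, the density identity $\mathrm{cl}_{\beta X}(X\cap V)=\mathrm{cl}_{\beta X}V$, and Lemmas \ref{KJKJ} and \ref{ERS}, and then invoke Theorem \ref{BBV}. The only (immaterial) differences are cosmetic: the paper nests the closures inside $\mathrm{int}_{\beta X}\upsilon X$ rather than $\upsilon X$, and argues per basic piece $\mathrm{int}_{\beta X}\mathrm{cl}_{\beta X}A\subseteq\mathrm{int}_{\beta X}\upsilon X$ where you instead observe that $\lambda_{\mathscr{U}_X}X$ is open and contained in $\upsilon X$.
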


\begin{proof}
For simplicity of the notation we denote ${\mathscr U}_X$ by ${\mathscr U}$.
To prove the theorem, by Theorem \ref{BBV}, it suffices to show that
\[\lambda_{\mathscr U} X=\mathrm{int}_{\beta X}\upsilon X\]
and note that
\[\mathrm{cl}_{\beta X}(\beta X\setminus\upsilon X)=\beta X\setminus\mathrm{int}_{\beta X}\upsilon X.\]

Let $z$ be in $\mathrm{int}_{\beta X}\upsilon X$. Let $U$ and $V$  be open subsets of $\beta X$ such that
\[z\in U\subseteq\mathrm{cl}_{\beta X}U\subseteq V\subseteq\mathrm{cl}_{\beta X}V\subseteq\mathrm{int}_{\beta X}\upsilon X.\]
Let
\[U'=U\cap X\quad\text{and}\quad V'=V\cap X.\]
Note that $V'$ is open in $X$ and $\mathrm{cl}_X V'$ is pseudocompact by Lemma \ref{ERS}, as $\mathrm{cl}_{\beta X}V'$ is contained in $\mathrm{cl}_{\beta X}V$ and the latter is contained in $\upsilon X$. Therefore $V'$ is in ${\mathscr U}$, and thus so is $\mathrm{cl}_X U'$, as $\mathrm{cl}_X U'$ is contained in $V'$. Therefore $\mathrm{int}_{\beta X}\mathrm{cl}_{\beta X}U'$ is contained in $\lambda_{\mathscr U} X$ by the definition of $\lambda_{\mathscr U} X$. Observe that $z$ is in $\mathrm{int}_{\beta X}\mathrm{cl}_{\beta X}U$, as $z$ is in $U$ and $U$ is open in $\beta X$. Also $\mathrm{cl}_{\beta X}U=\mathrm{cl}_{\beta X}(U\cap X)$, as $U$ is open in $\beta X$ and $X$ is dense in $\beta X$, that is $\mathrm{cl}_{\beta X}U=\mathrm{cl}_{\beta X}U'$. In particular, $z$ is in $\mathrm{int}_{\beta X}\mathrm{cl}_{\beta X}U'$ and is thus in $\lambda_{\mathscr U} X$.

Next, let $t$ be in $\lambda_{\mathscr U} X$. Then $t$ is in $\mathrm{int}_{\beta X}\mathrm{cl}_{\beta X}A$ for some subset $A$ of $X$ whose closure $\mathrm{cl}_XA$ is in ${\mathscr U}$. By Lemma \ref{KJKJ} there is an open subset $U$ of $X$ whose closure $\mathrm{cl}_X U$ is pseudocompact and contains $\mathrm{cl}_XA$. Observe that $\mathrm{cl}_{\beta X}U$, and thus in particular $\mathrm{cl}_{\beta X}A$, is contained in $\upsilon X$ by Lemma \ref{ERS}. Therefore $\mathrm{int}_{\beta X}\mathrm{cl}_{\beta X}A$ is contained in $\mathrm{int}_{\beta X}\upsilon X$. Thus $t$ is in $\mathrm{int}_{\beta X}\mathrm{cl}_{\beta X}\upsilon X$.
\end{proof}

Recall that a completely regular space $X$ is called \textit{locally pseudocompact} if for every $x$ in $X$ there is an open neighborhood $U$ of $x$ in $X$ whose closure $\mathrm{cl}_XU$ is pseudocompact. The following is known (see \cite{C}, also \cite{Ha}); it connects with Theorem \ref{KHG}.

\begin{proposition}\label{HJIIG}
For a completely regular space $X$ the following are equivalent.
\begin{itemize}
\item[\rm(1)] $X$ is locally pseudocompact.
\item[\rm(2)] $\mathrm{cl}_{\beta X}(\beta X\setminus\upsilon X)$ is contained in $\beta X\setminus X$.
\end{itemize}
\end{proposition}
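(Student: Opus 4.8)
The plan is to reduce the statement to Proposition \ref{JJHGH} applied to the ideal $\mathscr{U}_X$, after first verifying that ``locally pseudocompact'' is precisely ``local modulo $\mathscr{U}_X$'' in the sense of Definition \ref{JHGF}, and then identifying $\beta X\setminus\lambda_{\mathscr U}X$ with $\mathrm{cl}_{\beta X}(\beta X\setminus\upsilon X)$ by means of the computation already carried out in the proof of Theorem \ref{KHG}.

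First I would establish the key reduction: $X$ is locally pseudocompact if and only if $X$ is local modulo $\mathscr{U}_X$. For the forward direction, fix $x$ in $X$ and an open neighborhood $U$ of $x$ with $\mathrm{cl}_X U$ pseudocompact; since a completely regular space is regular, I can choose an open neighborhood $W$ of $x$ with $\mathrm{cl}_X W\subseteq U$, and then $\mathrm{cl}_X W$ is in $\mathscr{U}_X$ by Lemma \ref{KJKJ}, being contained in the open set $U$ whose closure is pseudocompact. Conversely, if $X$ is local modulo $\mathscr{U}_X$, fix $x$ and an open neighborhood $W$ of $x$ with $\mathrm{cl}_X W$ in $\mathscr{U}_X$; Lemma \ref{KJKJ} then supplies an open subset $V$ of $X$ with $\mathrm{cl}_X W\subseteq V$ and $\mathrm{cl}_X V$ pseudocompact, and since $x$ lies in $V$, the set $V$ is itself an open neighborhood of $x$ with pseudocompact closure, so $X$ is locally pseudocompact.

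With this in hand I would invoke Proposition \ref{JJHGH} with $\mathscr{H}=\mathscr{U}_X$: being local modulo $\mathscr{U}_X$ is equivalent to $\beta X\setminus\lambda_{\mathscr U}X$ being contained in $\beta X\setminus X$. Finally, I would recall from the proof of Theorem \ref{KHG} the identity $\lambda_{\mathscr U}X=\mathrm{int}_{\beta X}\upsilon X$, which gives
\[\beta X\setminus\lambda_{\mathscr U}X=\beta X\setminus\mathrm{int}_{\beta X}\upsilon X=\mathrm{cl}_{\beta X}(\beta X\setminus\upsilon X).\]
Substituting this into condition (2) of Proposition \ref{JJHGH} yields exactly $\mathrm{cl}_{\beta X}(\beta X\setminus\upsilon X)\subseteq\beta X\setminus X$, which is condition (2) of the present proposition; chaining the three equivalences then completes the argument.

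The point requiring the most care is the equivalence of the two notions of locality, precisely because pseudocompactness is \emph{not} closed hereditary: one cannot simply pass from a neighborhood whose closure belongs to $\mathscr{U}_X$ to a neighborhood with genuinely pseudocompact closure by restricting to a closed subspace. The clean way around this is Lemma \ref{KJKJ}, which repackages membership in $\mathscr{U}_X$ as containment in an open set with pseudocompact closure; in the converse direction this lets me hand back the honest open neighborhood $V$, whose closure \emph{is} pseudocompact, instead of attempting to argue that $\mathrm{cl}_X W$ is pseudocompact.
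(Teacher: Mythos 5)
Your proposal is correct and follows essentially the same route as the paper: both reduce the statement to Proposition \ref{JJHGH} by showing that local pseudocompactness coincides with being local modulo $\mathscr{U}_X$ (via regularity and Lemma \ref{KJKJ}) and then identifying $\beta X\setminus\lambda_{\mathscr U}X$ with $\mathrm{cl}_{\beta X}(\beta X\setminus\upsilon X)$ through the equality $\lambda_{\mathscr U}X=\mathrm{int}_{\beta X}\upsilon X$ from the proof of Theorem \ref{KHG}. The only cosmetic difference is that the paper invokes heredity of pseudocompactness with respect to regular closed subsets to make the shrunken neighborhood's closure itself pseudocompact, whereas you sidestep that by handing back the open set supplied by Lemma \ref{KJKJ}; both work, and you also make explicit the identification of $\lambda_{\mathscr U}X$ that the paper leaves implicit.
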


\begin{proof}
Observe that by regularity of $X$ every open neighborhood of a point $x$ in $X$ whose closure in $X$ is pseudocompact contains the closure in $X$ of an open neighborhood of $x$ in $X$ which is necessarily pseudocompact, as pseudocompactness is hereditary with respect to regular closed subsets. Now, by the representation given for ${\mathscr U}_X$ in Lemma \ref{KJKJ}, it is easy to check that $X$ is locally pseudocompact if and only if $X$ is local modulo the ideal ${\mathscr U}_X$ of $X$. Proposition \ref{JJHGH} now concludes the proof.
\end{proof}

Next, we examine how connectedness modulo pseudocompactness is preserved under formation of unions. We will need the following lemma.

Recall that a subspace $A$ of a space $X$ is said to be \textit{$C$-embedded in $X$} (\textit{$C^*$-embedded in $X$}, respectively) if every continuous (continuous bounded, respectively) mapping $f:A\rightarrow\mathbb{R}$ can be continuously extended to the whole space $X$. Note that in  normal spaces closed subspaces are both $C$-embedded and $C^*$-embedded.

\begin{lemma}\label{DDJD}
Let $X$ be a completely regular space and let $A$ be a subspace of $X$.
\begin{itemize}
\item[\rm(1)] If $A$ is $C^*$-embedded in $X$ then $\mathrm{cl}_{\beta X}A=\beta A$.
\item[\rm(2)] \emph{(Gillman and Jerison \cite{GJ})} If $A$ is $C$-embedded in $X$ then $\mathrm{cl}_{\upsilon X}A=\upsilon A$.
\end{itemize}
\end{lemma}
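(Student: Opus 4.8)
The plan is, in each part, to show that the closure on the left is a compactification (respectively a realcompactification) of $A$ which enjoys the extension property characterizing $\beta A$ (respectively $\upsilon A$), and then to appeal to the uniqueness of these objects.

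For (1) I would first observe that $\mathrm{cl}_{\beta X}A$ is compact, being a closed subspace of the compact space $\beta X$, and that it contains $A$ as a dense subspace; thus it is a compactification of $A$. To recognize it as $\beta A$ it then suffices to verify that every continuous mapping $f:A\rightarrow[0,1]$ extends continuously over $\mathrm{cl}_{\beta X}A$. Given such an $f$, the hypothesis that $A$ is $C^*$-embedded in $X$ provides a continuous extension $g:X\rightarrow[0,1]$; its Stone--\v{C}ech extension $g_\beta:\beta X\rightarrow[0,1]$ restricted to $\mathrm{cl}_{\beta X}A$ is continuous and agrees with $f$ on the dense set $A$, hence is the sought extension. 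The characterization of the Stone--\v{C}ech compactification then yields $\mathrm{cl}_{\beta X}A=\beta A$.

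For (2) I would argue in exact parallel, replacing $\beta$ by $\upsilon$, bounded mappings by arbitrary continuous real-valued mappings, and $C^*$-embedding by $C$-embedding. Here the preliminary observation is that $\mathrm{cl}_{\upsilon X}A$ is realcompact: it is closed in $\upsilon X$, the latter is realcompact, and realcompactness is closed hereditary; so $\mathrm{cl}_{\upsilon X}A$ is a realcompactification of $A$. Given a continuous $f:A\rightarrow\mathbb{R}$, the $C$-embedding hypothesis extends it to $g:X\rightarrow\mathbb{R}$, which in turn extends to $g_\upsilon:\upsilon X\rightarrow\mathbb{R}$ by the defining property of $\upsilon X$; restricting $g_\upsilon$ to $\mathrm{cl}_{\upsilon X}A$ gives a continuous extension of $f$. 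The characterization of the Hewitt realcompactification then gives $\mathrm{cl}_{\upsilon X}A=\upsilon A$, as recorded by Gillman and Jerison.

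Neither part offers a genuine obstacle once this template is fixed; the only point deserving care is the preliminary step in (2), namely confirming that $\mathrm{cl}_{\upsilon X}A$ is realcompact so that it qualifies as a realcompactification of $A$ at all. The extension steps themselves are immediate from the universal properties of $\beta X$ and $\upsilon X$ together with the density of $A$ in its closure.
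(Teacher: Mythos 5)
The paper itself offers no proof of this lemma: part (2) is attributed to Gillman and Jerison, and part (1) is likewise quoted as a standard fact, so there is no internal argument to compare yours against. Your proposal is the standard proof and is correct in outline: in each case you exhibit the closure as a compactification (respectively realcompactification) of $A$ enjoying the extension property that, by the characterizations the paper records in its introduction, determines $\beta A$ (respectively $\upsilon A$) uniquely. One repair is needed in part (1), however. With the paper's definition of $C^*$-embedding, the extension of $f:A\rightarrow[0,1]$ supplied by the hypothesis is merely a continuous mapping $g:X\rightarrow\mathbb{R}$; it need not take values in $[0,1]$, and if it is unbounded it admits no continuous extension over $\beta X$ into $\mathbb{R}$ (or $[0,1]$) at all, so the symbol $g_\beta$ you invoke may not exist. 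You must first truncate: replace $g$ by $\min\{\max\{g,0\},1\}$, which is continuous, maps $X$ into $[0,1]$, and still agrees with $f$ on $A$ because $f$ takes values in $[0,1]$; its Stone--\v{C}ech extension $\beta X\rightarrow[0,1]$ then restricts to $\mathrm{cl}_{\beta X}A$ exactly as you describe. No such adjustment is needed in part (2), where the target is $\mathbb{R}$ and the defining property of $\upsilon X$ applies to $g$ directly. With that one-line fix, both parts are complete.
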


\begin{theorem}\label{KJHGF}
Let $X$ be a normal space such that
\[X=X_1\cup\cdots\cup X_n,\]
where $X_i$ for each $i=1,\dots,n$ is a closed subspace of $X$ which is connected modulo pseudocompactness. Suppose that
\[X_1\cap\cdots\cap X_n\]
is non-pseudocompact. Then $X$ is connected modulo pseudocompactness.
\end{theorem}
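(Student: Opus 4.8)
The plan is to prove the assertion equivalent to (2) of Theorem \ref{KHG}, namely that $\mathrm{cl}_{\beta X}(\beta X\setminus\upsilon X)$ is connected, by displaying this set as a union of connected subspaces with a point in common. (Each $X_i$, being closed in the normal Hausdorff space $X$, is itself normal and hence completely regular, so Theorem \ref{KHG} is available for it.)

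First I would use normality to transfer the relevant objects from the pieces to $X$. Since each $X_i$ is closed in the normal space $X$, it is both $C$-embedded and $C^*$-embedded, so Lemma \ref{DDJD} gives $\mathrm{cl}_{\beta X}X_i=\beta X_i$ and $\mathrm{cl}_{\upsilon X}X_i=\upsilon X_i$. As $\upsilon X$ is a subspace of $\beta X$, we get $\upsilon X_i=\mathrm{cl}_{\upsilon X}X_i=\upsilon X\cap\mathrm{cl}_{\beta X}X_i=\upsilon X\cap\beta X_i$, whence
\[\beta X_i\setminus\upsilon X_i=\beta X_i\cap(\beta X\setminus\upsilon X).\]
Taking closures in $\beta X$ of the dense union $X=X_1\cup\cdots\cup X_n$ yields $\beta X=\beta X_1\cup\cdots\cup\beta X_n$, and therefore
\[\beta X\setminus\upsilon X=\bigcup_{i=1}^n(\beta X_i\setminus\upsilon X_i),\qquad\mathrm{cl}_{\beta X}(\beta X\setminus\upsilon X)=\bigcup_{i=1}^n\mathrm{cl}_{\beta X}(\beta X_i\setminus\upsilon X_i).\]

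Next I would identify each summand as a connected set and locate a common point. Since $\beta X_i$ is compact, hence closed in $\beta X$, the closure $\mathrm{cl}_{\beta X}(\beta X_i\setminus\upsilon X_i)$ equals $\mathrm{cl}_{\beta X_i}(\beta X_i\setminus\upsilon X_i)$; as $X_i$ is connected modulo pseudocompactness, Theorem \ref{KHG} applied to $X_i$ shows this set is connected. For the common point I invoke the hypothesis on $A=X_1\cap\cdots\cap X_n$. The set $A$ is closed in $X$, hence $C$- and $C^*$-embedded, so $\mathrm{cl}_{\beta X}A=\beta A$ and $\mathrm{cl}_{\upsilon X}A=\upsilon A$. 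Because $A$ is non-pseudocompact there is an unbounded continuous $g:A\rightarrow\mathbb{R}$, which extends over $\upsilon A$; were $\beta A=\upsilon A$, it would extend over the compact space $\beta A$ and be bounded, a contradiction. Hence $\beta A\setminus\upsilon A\neq\emptyset$, so there is a point $p\in\mathrm{cl}_{\beta X}A$ with $p\notin\upsilon X$. Since $A\subseteq X_i$ gives $\mathrm{cl}_{\beta X}A\subseteq\beta X_i$ for every $i$, we obtain $p\in\beta X_i\cap(\beta X\setminus\upsilon X)=\beta X_i\setminus\upsilon X_i$, so $p$ lies in every summand.

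Finally, a union of connected subspaces sharing a common point is connected, so $\mathrm{cl}_{\beta X}(\beta X\setminus\upsilon X)$ is connected, and Theorem \ref{KHG} completes the argument. The main obstacle is the bookkeeping that relates the Stone--\v{C}ech and Hewitt objects of the $X_i$ and of $A$ to those of $X$; this is exactly what normality supplies, through the $C$- and $C^*$-embedding of closed subspaces together with Lemma \ref{DDJD}, and it is the reason the hypothesis is imposed for normal rather than merely completely regular $X$. Note that, unlike Theorem \ref{HGDF}, we never need to assume that $A$ is connected modulo pseudocompactness: the non-pseudocompactness of $A$ by itself produces the common point $p$.
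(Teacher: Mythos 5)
Your proof is correct and follows essentially the same route as the paper's: both reduce the statement to Theorem \ref{KHG}, use normality and Lemma \ref{DDJD} to identify $\beta X_i$ and $\upsilon X_i$ with $\mathrm{cl}_{\beta X}X_i$ and $\mathrm{cl}_{\upsilon X}X_i$, write $\mathrm{cl}_{\beta X}(\beta X\setminus\upsilon X)$ as the finite union of the connected sets $\mathrm{cl}_{\beta X_i}(\beta X_i\setminus\upsilon X_i)$, and use the non-pseudocompactness of $X_1\cap\cdots\cap X_n$ to produce a common point (the paper keeps the whole non-empty set $\mathrm{cl}_{\beta X}A\setminus\upsilon X$ inside every summand, while you extract a single point $p$ from it, and you additionally spell out why non-pseudocompactness forces $\beta A\neq\upsilon A$, which the paper takes as known). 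These are only cosmetic differences; the argument is the same.
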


\begin{proof}
Let $T$ be a closed subspace of $X$. Then
\[\beta T=\mathrm{cl}_{\beta X}T\quad\text{and}\quad\upsilon T=\mathrm{cl}_{\upsilon X}T\]
by Lemma \ref{DDJD}, as $T$ is both $C$-embedded and $C^*$-embedded in $X$, since $T$ is closed in $X$ and $X$ is normal. Therefore
\[\beta T\setminus\upsilon T=\mathrm{cl}_{\beta X}T\setminus\mathrm{cl}_{\upsilon X}T=\mathrm{cl}_{\beta X}T\setminus(\upsilon X\cap\mathrm{cl}_{\beta X}T)=\mathrm{cl}_{\beta X}T\setminus\upsilon X,\]
and thus, in particular,
\begin{eqnarray*}\label{KGF}
\mathrm{cl}_{\beta T}(\beta T\setminus\upsilon T)&=&\mathrm{cl}_{\mathrm{cl}_{\beta X}T}(\mathrm{cl}_{\beta X}T\setminus\upsilon X)\\&=&\mathrm{cl}_{\beta X}T\cap\mathrm{cl}_{\beta X}(\mathrm{cl}_{\beta X}T\setminus\upsilon X)=\mathrm{cl}_{\beta X}(\mathrm{cl}_{\beta X}T\setminus\upsilon X).
\end{eqnarray*}

Let
\[A=X_1\cap\cdots\cap X_n.\]
Then $A$ is closed in $X$, as it is the intersection of a finite number of closed subsets, and $\beta A\setminus\upsilon A$, which equals to $\mathrm{cl}_{\beta X}A\setminus\upsilon X$ by the above observation, is non-empty, as $A$ is non-pseudocompact. Note that $\mathrm{cl}_{\beta X_i}(\beta X_i\setminus\upsilon X_i)$, which equals to $\mathrm{cl}_{\beta X}(\mathrm{cl}_{\beta X}X_i\setminus\upsilon X)$ by the above observation, is connected for each $i=1,\dots,n$ by Theorem \ref{KHG}, as $X_i$ is connected modulo pseudocompactness. Observe that $\mathrm{cl}_{\beta X}(\mathrm{cl}_{\beta X}X_i\setminus\upsilon X)$ contains the non-empty set $\mathrm{cl}_{\beta X}A\setminus\upsilon X$ for each $i=1,\dots,n$. Therefore, the union
\[\bigcup_{i=1}^n\mathrm{cl}_{\beta X}(\mathrm{cl}_{\beta X}X_i\setminus\upsilon X)\]
is connected. Note that
\[\bigcup_{i=1}^n\mathrm{cl}_{\beta X}(\mathrm{cl}_{\beta X}X_i\setminus\upsilon X)=\mathrm{cl}_{\beta X}\bigg(\bigcup_{i=1}^n\mathrm{cl}_{\beta X}X_i\setminus\upsilon X\bigg)=\mathrm{cl}_{\beta X}(\beta X\setminus\upsilon X).\]
Therefore, $X$ is connected modulo pseudocompactness by Theorem \ref{KHG}.
\end{proof}

In the following example we show that the conclusion in the above theorem fails if the number of $X_i$'s is infinite.

\begin{example}\label{JGF}
Let $X$ be the space $\mathbb{R}$ endowed with the standard topology and let $X_n=(-\infty,n]$ for each positive integer $n$. Then $X$ is normal and
\[X=\bigcup_{n=1}^\infty X_n.\]
Also, $X_n$, for each positive integer $n$, is closed in $X$ and is connected modulo pseudocompactness. (The latter follows from Theorem \ref{KHG}. To see this, observe that $\upsilon X_n=X_n$, as $X_n$ is realcompact. The space $\mathrm{cl}_{\beta X_n}(\beta X_n\setminus\upsilon X_n)$ is then homeomorphic to
\[\mathrm{cl}_{\beta(-\infty,0]}\big(\beta(-\infty,0]\setminus(-\infty,0]\big)=\beta(-\infty,0]\setminus(-\infty,0]\]
which is known to be connected; see Exercise 6.L of \cite{GJ}.) Note that
\[\bigcap_{n=1}^\infty X_n=(-\infty,1]\]
is clearly non-pseudocompact. Finally, $\mathbb{R}$ is not connected modulo pseudocompactness by Theorem \ref{KHG}, as $\beta\mathbb{R}\setminus\mathbb{R}$ (which equals to $\mathrm{cl}_{\beta\mathbb{R}}(\beta\mathbb{R}\setminus\mathbb{R})$) is not connected. (See Exercise 6.L of \cite{GJ}.)
\end{example}

The following example shows that a space which has a dense subspace which is connected modulo pseudocompactness may not be itself connected modulo pseudocompactness.

\begin{example}\label{HGG}
Let
\[Y=\mathbb{R}\times[0,1],\]
considered as a subspace of $\mathbb{R}^2$, and let
\[X=\mathbb{R}\times(0,1).\]
Then $X$ is dense in $Y$. Also, $X$ is connected modulo pseudocompactness, as $X$ is homeomorphic to $\mathbb{R}^2$ and the latter is so by Theorem \ref{KHG}, since $\beta\mathbb{R}^2\setminus\mathbb{R}^2$ (which equals to $\mathrm{cl}_{\beta\mathbb{R}^2}(\beta\mathbb{R}^2\setminus\mathbb{R}^2)$) is connected. The space $Y$, however, is not connected modulo pseudocompactness; the mapping $f:Y\rightarrow[0,1]$ defined by
\[f(s,t)=\max\big\{0,\min\{s,1\}\big\}\]
is continuous and is $2$-valued modulo pseudocompactness, as we now check. Suppose that either
\[f^{-1}(0)=(-\infty,0]\times[0,1]\quad\text{or}\quad f^{-1}(1)=[1,\infty)\times[0,1],\]
say the latter, is in ${\mathscr U}_{\,Y}$. By Lemma \ref{KJKJ} then $f^{-1}(1)$ is contained in an open subset of $Y$ whose closure in $Y$ is pseudocompact. But then $f^{-1}(1)$ is pseudocompact, as in this case $f^{-1}(1)$ is a regular closed subset of $Y$, and pseudocompactness is hereditary with respect to regular closed subsets. This is a contradiction. Observe that
\[Y\setminus\big(f^{-1}(0)\cup f^{-1}(1)\big)=(0,1)\times[0,1],\]
is in ${\mathscr U}_{\,Y}$, as it is an open subset of $Y$ whose closure in $Y$ is pseudocompact (indeed compact).
\end{example}

In the following example we show that the product of two spaces which are connected modulo pseudocompactness is not necessarily connected modulo pseudocompactness.

\begin{example}\label{HGHFDF}
Let
\[X=\{0,1\}\times[0,\infty),\]
where $\{0,1\}$ and $[0,\infty)$ are considered as subspaces of $\mathbb{R}$ endowed with the standard topology. The spaces $\{0,1\}$ and $[0,\infty)$ are connected modulo pseudocompactness. (That $[0,\infty)$ is connected modulo pseudocompactness follows from Theorem \ref{KHG} and the fact that $\beta[0,\infty)\setminus[0,\infty)$ is connected.) The space $X$, however, is not connected modulo pseudocompactness; let $f:X\rightarrow[0,1]$ be defined such that
\[f|_{\{0\}\times[0,\infty)}=\mathbf{0}\quad\text{and}\quad f|_{\{1\}\times[0,\infty)}=\mathbf{1}.\]
Then $f$ is continuous, and is $2$-valued modulo ${\mathscr U}_X$, as neither of
\[f^{-1}(0)={\{0\}\times[0,\infty)}\quad\text{and}\quad f^{-1}(1)={\{1\}\times[0,\infty)}\]
is in ${\mathscr U}_X$. (Note that $f^{-1}(0)$ and $f^{-1}(1)$ are both open and closed in $X$. Thus, if either $f^{-1}(0)$ or $f^{-1}(1)$ is in ${\mathscr U}_X$, then, using Lemma \ref{KJKJ}, it will be contained in an open subset of $X$ whose closure in $X$ is pseudocompact, and is therefore pseudocompact. But, this is clearly not correct.)
\end{example}

\subsection{The case of realcompactness}\label{HGDFS}

In this subsection we will be dealing with realcompactness.  Recall that a space is called \textit{realcompact} if it is homeomorphic to a closed subspace of some product $\mathbb{R}^\alpha$.

\begin{definition}\label{KJH}
For a normal space $X$ define
\[{\mathscr R}_X=\langle R:\mbox{$R$ is a subset of $X$ whose closure is realcompact}\rangle.\]
\end{definition}

\begin{definition}\label{JHD}
A normal space $X$ is said to be \textit{connected modulo realcompactness} if it is connected modulo the ideal ${\mathscr R}_X$ of $X$.
\end{definition}

The next theorem provides a characterization for connectedness modulo realcompactness of a space in terms of connectedness of a familiar subspace of its Stone--\v{C}ech compactification. We will need a few lemmas first.

The next lemma states that realcompactness is preserved under finite closed sums of subspaces in the realm of normal spaces. (Realcompactness, despite the fact that it is a closed hereditary topological property, is not in general preserved under finite closed sums of subspaces in the realm of completely regular spaces. In \cite{M} -- a correction in \cite{M1} -- S. Mr\'{o}wka describes a completely regular space which is not realcompact but it can be represented as the union of two closed realcompact subspaces; a simpler such example is given by A. Mysior in \cite{My}.)

\begin{lemma}\label{JJHG}
Let $X$ be a normal space such that
\[X=X_1\cup\cdots\cup X_n,\]
where $X_i$ is a closed realcompact subspace of $X$ for each $i=1,\ldots,n$. Then $X$ is realcompact.
\end{lemma}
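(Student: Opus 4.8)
The plan is to prove realcompactness of $X$ by showing that $\upsilon X=X$; recall that this equality characterizes realcompactness. The whole argument takes place inside the Hewitt realcompactification $\upsilon X$, and the decisive observation will be that each $X_i$ is already closed in $\upsilon X$, not merely in $X$.

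First I would record that, since $X$ is normal and each $X_i$ is closed in $X$, the subspace $X_i$ is $C$-embedded in $X$. Lemma \ref{DDJD}(2) then applies and yields
\[
\mathrm{cl}_{\upsilon X}X_i=\upsilon X_i.
\]
Because $X_i$ is realcompact we have $\upsilon X_i=X_i$, and therefore $\mathrm{cl}_{\upsilon X}X_i=X_i$ for each $i=1,\dots,n$. In other words, every $X_i$ is a closed subset of $\upsilon X$.

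Next I would use that $X$ is dense in $\upsilon X$ together with the fact that closure distributes over finite unions. This gives
\[
\upsilon X=\mathrm{cl}_{\upsilon X}X=\mathrm{cl}_{\upsilon X}(X_1\cup\cdots\cup X_n)=\bigcup_{i=1}^n\mathrm{cl}_{\upsilon X}X_i=\bigcup_{i=1}^n X_i=X.
\]
Hence $\upsilon X=X$ and $X$ is realcompact.

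The step carrying all the weight is the identity $\mathrm{cl}_{\upsilon X}X_i=\upsilon X_i$, and this is exactly where normality enters: it is what guarantees the $C$-embedding of the closed sets $X_i$ needed to invoke Lemma \ref{DDJD}(2). Everything else is formal --- the distribution of closure over finite unions and the density of $X$ in $\upsilon X$. It is worth noting that this is precisely the point at which the hypothesis cannot be dropped, as the Mr\'{o}wka and Mysior examples cited just before the statement show that a completely regular (non-normal) union of two closed realcompact subspaces need not be realcompact.
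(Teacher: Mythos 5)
Your proof is correct and is essentially the paper's own argument: both use normality to get each closed $X_i$ $C$-embedded in $X$, invoke Lemma \ref{DDJD}(2) together with $\upsilon X_i=X_i$ to conclude $\mathrm{cl}_{\upsilon X}X_i=X_i$, and then compute $\upsilon X=\bigcup_{i=1}^n\mathrm{cl}_{\upsilon X}X_i=X$. The only difference is that you spell out explicitly the step $\upsilon X=\mathrm{cl}_{\upsilon X}X$ and the distribution of closure over the finite union, which the paper leaves implicit.
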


\begin{proof}
Let $i=1,\ldots,n$. Observe that $\upsilon X_i=X_i$, as $X_i$ is realcompact. Also, $\mathrm{cl}_{\upsilon X}X_i=\upsilon X_i$ by Lemma \ref{DDJD}, as $X_i$ is $C$-embedded in $X$, since it is closed in $X$ and $X$ is normal. Thus
\[\upsilon X=\mathrm{cl}_{\upsilon X}X_1\cup\cdots\cup\mathrm{cl}_{\upsilon X} X_n=\upsilon X_1\cup\cdots\cup\upsilon X_n=X_1\cup\cdots\cup X_n=X.\]
That is, $X$ is realcompact.
\end{proof}

\begin{lemma}\label{JHFD}
Let $X$ be a normal space. Then
\[{\mathscr R}_X=\{A:\mbox{$A$ is a subset of $X$ whose closure is realcompact}\}.\]
\end{lemma}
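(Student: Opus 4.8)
The plan is to show directly that the right-hand collection, which I will denote $\mathcal{C}=\{A\subseteq X:\mathrm{cl}_XA\text{ is realcompact}\}$, coincides with the ideal it generates. Since $\mathscr{R}_X=\langle\mathcal{C}\rangle$ by Definition \ref{KJH}, one inclusion is immediate: every generator lies in the generated ideal, so $\mathcal{C}\subseteq\mathscr{R}_X$. The substance is the reverse inclusion $\mathscr{R}_X\subseteq\mathcal{C}$, and this will mirror the proof of Lemma \ref{KJKJ}.

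First I would take an arbitrary $A$ in $\mathscr{R}_X$. By the description of a generated ideal recalled in the introduction (subsets of finite unions of generators), there are sets $R_1,\dots,R_n\subseteq X$, each with $\mathrm{cl}_XR_i$ realcompact, such that $A\subseteq R_1\cup\cdots\cup R_n$. Taking closures and using that closure commutes with finite unions gives
\[\mathrm{cl}_XA\subseteq\mathrm{cl}_X(R_1\cup\cdots\cup R_n)=\mathrm{cl}_XR_1\cup\cdots\cup\mathrm{cl}_XR_n.\]

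Next I would argue that the union on the right is realcompact. It is a closed subspace of the normal space $X$, hence itself normal, and it is the union of the finitely many closed subspaces $\mathrm{cl}_XR_i$, each of which is realcompact. Lemma \ref{JJHG} --- realcompactness is preserved under finite closed sums in the normal setting --- then yields that $\mathrm{cl}_XR_1\cup\cdots\cup\mathrm{cl}_XR_n$ is realcompact. Finally, $\mathrm{cl}_XA$ is a closed subspace of this realcompact space, so it is realcompact because realcompactness is closed hereditary; thus $A$ is in $\mathcal{C}$, completing the reverse inclusion.

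There is no genuine obstacle here; the argument is routine once the two structural facts are in place. The only point worth flagging is that normality is essential and enters precisely through Lemma \ref{JJHG}: realcompactness is not in general preserved under finite closed sums among merely completely regular spaces (as the Mr\'{o}wka and Mysior examples cited above show), so the equality $\mathscr{R}_X=\mathcal{C}$ should not be expected to survive the dropping of the normality hypothesis.
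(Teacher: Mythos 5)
Your proof is correct and follows essentially the same route as the paper's: both obtain $\mathrm{cl}_XA\subseteq\mathrm{cl}_XR_1\cup\cdots\cup\mathrm{cl}_XR_n$, apply Lemma \ref{JJHG} to this finite union of closed realcompact subspaces (which is normal, being closed in the normal space $X$), and conclude by closed hereditariness of realcompactness. Your closing remark on why normality is indispensable matches the paper's own discussion surrounding Lemma \ref{JJHG}.
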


\begin{proof}
It is clear that a subset of $X$ is in ${\mathscr R}_X$ if its closure in $X$ is realcompact.

Let $A$ be in ${\mathscr R}_X$. Then $A$ is contained in a finite union
\[R=R_1\cup\cdots\cup R_n,\]
where $R_i$ is a subset of $X$ whose closure in $X$ is realcompact for each $i=1,\ldots,n$. Observe that $\mathrm{cl}_XR$ is normal, as it is closed in $X$ and $X$ is so. Therefore
\[\mathrm{cl}_XR=\mathrm{cl}_XR_1\cup\cdots\cup\mathrm{cl}_XR_n\]
is realcompact by Lemma \ref{JJHG}, as it is a union of a finite number of its closed realcompact subspaces. The closure $\mathrm{cl}_XA$ is then realcompact, as it is a closed subspace of $\mathrm{cl}_XR$.
\end{proof}

\begin{lemma}\label{B}
Let $X$ be a completely regular space. Let $A$ be a subset of $X$ such that
\[\mathrm{cl}_{\beta X}A\cap(\upsilon X\setminus X)=\emptyset.\]
Then $\mathrm{cl}_XA$ is realcompact.
\end{lemma}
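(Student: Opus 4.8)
The plan is to identify $\mathrm{cl}_X A$ as a \emph{closed} subspace of the realcompactification $\upsilon X$ and then invoke the fact that realcompactness is closed hereditary. Writing $F=\mathrm{cl}_X A$, the whole argument reduces to pinning down $F$ exactly as the trace of $\mathrm{cl}_{\beta X}A$ on $\upsilon X$; no analysis of zero-sets, extensions of mappings, or of $\beta F$ itself will be required.

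First I would record the elementary identity $\mathrm{cl}_{\beta X}A\cap X=\mathrm{cl}_X A=F$, which holds because the closure of $A$ in the subspace $X$ is the trace on $X$ of its closure in $\beta X$. Since $F\subseteq X\subseteq\upsilon X$ and $F\subseteq\mathrm{cl}_{\beta X}A$, one inclusion is immediate, namely $F\subseteq\mathrm{cl}_{\beta X}A\cap\upsilon X$. The reverse inclusion is the one place where the hypothesis enters: the assumption $\mathrm{cl}_{\beta X}A\cap(\upsilon X\setminus X)=\emptyset$ says precisely that every point of $\mathrm{cl}_{\beta X}A$ lying in $\upsilon X$ already lies in $X$, whence $\mathrm{cl}_{\beta X}A\cap\upsilon X\subseteq\mathrm{cl}_{\beta X}A\cap X=F$. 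Combining the two inclusions gives the exact identity
\[
\mathrm{cl}_X A=\mathrm{cl}_{\beta X}A\cap\upsilon X.
\]

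Finally I would observe that the right-hand side is closed in $\upsilon X$, being the intersection of the $\beta X$-closed set $\mathrm{cl}_{\beta X}A$ with $\upsilon X$. Since $\upsilon X$ is realcompact (it is, by definition, a realcompactification of $X$) and realcompactness is closed hereditary (as already used in the proof of Lemma \ref{ERS}), the closed subspace $\mathrm{cl}_X A$ of $\upsilon X$ is realcompact, which is exactly the assertion. The step I expect to require the most care is the set-theoretic verification that the hypothesis forces the displayed \emph{equality} and not merely an inclusion; once that identity is established the conclusion is immediate, and this route deliberately sidesteps the harder questions of whether $F$ is $C$- or $C^*$-embedded in $X$ and how $\beta F$ relates to $\mathrm{cl}_{\beta X}A$.
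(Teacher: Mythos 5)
Your proof is correct and follows essentially the same route as the paper's: both establish the identity $\mathrm{cl}_X A=\mathrm{cl}_{\beta X}A\cap\upsilon X$ from the hypothesis (the paper by unioning with the empty set $\mathrm{cl}_{\beta X}A\cap(\upsilon X\setminus X)$, you by two inclusions — the same set-theoretic fact), and then conclude that $\mathrm{cl}_X A$, being closed in the realcompact space $\upsilon X$, is realcompact.
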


\begin{proof}
We have
\begin{eqnarray*}
\mathrm{cl}_XA=\mathrm{cl}_{\beta X} A\cap X&=&(\mathrm{cl}_{\beta X} A\cap X)\cup\big(\mathrm{cl}_{\beta X} A\cap(\upsilon X\setminus X)\big)\\&=&\mathrm{cl}_{\beta X} A\cap\big(X\cup(\upsilon X\setminus X)\big)=\mathrm{cl}_{\beta X} A\cap\upsilon X=\mathrm{cl}_{\upsilon X} A.
\end{eqnarray*}
Thus $\mathrm{cl}_XA$ is closed in $\upsilon X$, and therefore is realcompact.
\end{proof}

The following lemma provides sort of converse for Lemma \ref{B}.

\begin{lemma}\label{JGH}
Let $X$ be a normal space and let $A$ be a closed realcompact subspace of $X$. Then
\[\mathrm{int}_{\beta X}\mathrm{cl}_{\beta X}A\cap\mathrm{cl}_{\beta X}(\upsilon X\setminus X)=\emptyset.\]
\end{lemma}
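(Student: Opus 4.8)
The plan is to first reduce the claim to the simpler set-equality $\mathrm{cl}_{\beta X}A\cap(\upsilon X\setminus X)=\emptyset$, and then to upgrade this to the stated disjointness, which involves the interior and the closure operations, by a routine open-set argument.

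First I would show that $\mathrm{cl}_{\upsilon X}A=A$. Since $A$ is closed in the normal space $X$, it is $C$-embedded in $X$; hence Lemma \ref{DDJD}(2) gives $\mathrm{cl}_{\upsilon X}A=\upsilon A$. Because $A$ is realcompact, $\upsilon A=A$, so $\mathrm{cl}_{\upsilon X}A=A$. Using that $\upsilon X$ is a subspace of $\beta X$, the closure of $A$ in $\upsilon X$ is $\upsilon X\cap\mathrm{cl}_{\beta X}A$, whence $\upsilon X\cap\mathrm{cl}_{\beta X}A=A$. Intersecting with $\beta X\setminus X$ and using $A\subseteq X$ then yields
\[\mathrm{cl}_{\beta X}A\cap(\upsilon X\setminus X)=\mathrm{cl}_{\beta X}A\cap\upsilon X\cap(\beta X\setminus X)=A\cap(\beta X\setminus X)=\emptyset.\]

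With this in hand I would finish as follows. Write $W=\mathrm{int}_{\beta X}\mathrm{cl}_{\beta X}A$, an open subset of $\beta X$ contained in $\mathrm{cl}_{\beta X}A$. By the displayed equality, $W\cap(\upsilon X\setminus X)=\emptyset$. Since $W$ is open and meets no point of $\upsilon X\setminus X$, it cannot meet the closure $\mathrm{cl}_{\beta X}(\upsilon X\setminus X)$ either: a point of $W$ lying in that closure would make $W$ a neighborhood of it meeting $\upsilon X\setminus X$, a contradiction. Hence $W\cap\mathrm{cl}_{\beta X}(\upsilon X\setminus X)=\emptyset$, which is exactly the assertion.

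The only genuinely non-formal input is the identification $\mathrm{cl}_{\upsilon X}A=A$, so the main obstacle is verifying the hypotheses of Lemma \ref{DDJD}(2)---namely that $A$ is $C$-embedded in $X$, which is precisely where normality of $X$ together with closedness of $A$ enters. Everything after that is the elementary subspace-closure identity and the standard fact that an open set disjoint from a subset is disjoint from its closure.
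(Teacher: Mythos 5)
Your proof is correct and follows essentially the same route as the paper's: both reduce the statement to the equality $\mathrm{cl}_{\beta X}A\cap(\upsilon X\setminus X)=\emptyset$ via Lemma \ref{DDJD}(2) (using that $A$ is $C$-embedded because it is closed in the normal space $X$) together with $\upsilon A=A$, and both then pass between $\upsilon X\setminus X$ and its closure using that $\mathrm{int}_{\beta X}\mathrm{cl}_{\beta X}A$ is open. The only difference is presentational: the paper argues by contradiction, while you argue directly.
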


\begin{proof}
Suppose otherwise. Thus, in particular
\begin{equation}\label{LK}
\mathrm{int}_{\beta X}\mathrm{cl}_{\beta X}A\cap(\upsilon X\setminus X)\neq\emptyset.
\end{equation}
Note that $\mathrm{cl}_{\upsilon X}A=\upsilon A$ by Lemma \ref{DDJD}, as $A$ is $C$-embedded in $X$, since $A$ is closed in $X$
and $X$ is normal. Also, $\upsilon A=A$, as $A$ is realcompact. Therefore
\[\mathrm{cl}_{\beta X}A\cap(\upsilon X\setminus X)=\mathrm{cl}_{\upsilon X}A\setminus X=\upsilon A\setminus X=A\setminus X=\emptyset.\]
This contradicts (\ref{LK}).
\end{proof}

\begin{theorem}\label{HGFDF}
For a normal space $X$ the following are equivalent.
\begin{itemize}
\item[\rm(1)] $X$ is connected modulo realcompactness.
\item[\rm(2)] $\mathrm{cl}_{\beta X}(\upsilon X\setminus X)$ is connected.
\end{itemize}
\end{theorem}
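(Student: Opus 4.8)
The plan is to deduce this from the general characterization in Theorem \ref{BBV}, applied to the ideal $\mathscr{H}={\mathscr R}_X$. By that theorem, $X$ is connected modulo realcompactness (that is, modulo ${\mathscr R}_X$) if and only if $\beta X\setminus\lambda_{{\mathscr R}_X}X$ is connected, so it suffices to prove the purely point-set identity
\[\beta X\setminus\lambda_{{\mathscr R}_X}X=\mathrm{cl}_{\beta X}(\upsilon X\setminus X),\]
or equivalently $\lambda_{{\mathscr R}_X}X=\beta X\setminus\mathrm{cl}_{\beta X}(\upsilon X\setminus X)$. Writing $W=\mathrm{cl}_{\beta X}(\upsilon X\setminus X)$ for brevity, I would first use Lemma \ref{JHFD} to note that $\mathrm{cl}_XA\in{\mathscr R}_X$ exactly when $\mathrm{cl}_XA$ is realcompact, so that
\[\lambda_{{\mathscr R}_X}X=\bigcup\big\{\mathrm{int}_{\beta X}\mathrm{cl}_{\beta X}A:\mathrm{cl}_XA\text{ is realcompact}\big\}.\]

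For the inclusion $\lambda_{{\mathscr R}_X}X\subseteq\beta X\setminus W$, I would take a generating piece $\mathrm{int}_{\beta X}\mathrm{cl}_{\beta X}A$ with $\mathrm{cl}_XA$ realcompact and apply Lemma \ref{JGH} to the closed realcompact subspace $\mathrm{cl}_XA$. Since $\mathrm{cl}_{\beta X}(\mathrm{cl}_XA)=\mathrm{cl}_{\beta X}A$, that lemma gives $\mathrm{int}_{\beta X}\mathrm{cl}_{\beta X}A\cap W=\emptyset$, and taking the union over all such $A$ yields the desired inclusion.

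For the reverse inclusion, let $z\in\beta X\setminus W$. Using regularity (indeed normality) of the compact Hausdorff space $\beta X$, I would choose an open set $U$ of $\beta X$ with $z\in U\subseteq\mathrm{cl}_{\beta X}U\subseteq\beta X\setminus W$, and set $A=U\cap X$. Because $U$ is open and $X$ is dense in $\beta X$, one has $\mathrm{cl}_{\beta X}A=\mathrm{cl}_{\beta X}U$, which is disjoint from $W$ and hence from $\upsilon X\setminus X$; Lemma \ref{B} then shows $\mathrm{cl}_XA$ is realcompact, so $\mathrm{int}_{\beta X}\mathrm{cl}_{\beta X}A\subseteq\lambda_{{\mathscr R}_X}X$. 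Finally $z\in U\subseteq\mathrm{int}_{\beta X}\mathrm{cl}_{\beta X}U=\mathrm{int}_{\beta X}\mathrm{cl}_{\beta X}A$, so $z\in\lambda_{{\mathscr R}_X}X$. Combining the two inclusions gives the identity, and then $\beta X\setminus\lambda_{{\mathscr R}_X}X=W$ is connected precisely when $X$ is connected modulo realcompactness.

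The routine density bookkeeping ($\mathrm{cl}_{\beta X}(U\cap X)=\mathrm{cl}_{\beta X}U$ and $\mathrm{cl}_{\beta X}(\mathrm{cl}_XA)=\mathrm{cl}_{\beta X}A$) poses no real difficulty; the genuine content is funneled entirely into the two converse estimates, Lemma \ref{B} and Lemma \ref{JGH}, which together pin down exactly when $\mathrm{int}_{\beta X}\mathrm{cl}_{\beta X}A$ meets $W$. The one point I would watch carefully is the reverse inclusion: the nontrivial step is manufacturing from an abstract neighbourhood of $z$ a subset $A$ of $X$ whose closure in $X$ is realcompact and whose $\beta X$-interior-closure still captures $z$, and this is precisely where normality of $X$ (needed for Lemma \ref{B}'s hypothesis to be usable via Lemma \ref{JGH}'s realm) enters essentially.
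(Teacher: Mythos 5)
Your proof is correct and follows essentially the same route as the paper's: reduce via Theorem \ref{BBV} to the point-set identity $\lambda_{{\mathscr R}_X}X=\beta X\setminus\mathrm{cl}_{\beta X}(\upsilon X\setminus X)$, proving one inclusion with Lemmas \ref{JHFD} and \ref{JGH} and the reverse one with Lemma \ref{B}, exactly as the paper does. One small slip in your closing commentary: normality enters essentially in the inclusion $\lambda_{{\mathscr R}_X}X\subseteq\beta X\setminus\mathrm{cl}_{\beta X}(\upsilon X\setminus X)$ (both Lemma \ref{JHFD} and Lemma \ref{JGH} require it), whereas the reverse inclusion that you flagged uses only Lemma \ref{B}, which holds for every completely regular space.
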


\begin{proof}
For simplicity of the notation we denote ${\mathscr R}_X$ by ${\mathscr R}$.
To prove the theorem, by Theorem \ref{BBV}, it suffices to show that
\[\lambda_{\mathscr R} X=\beta X\setminus\mathrm{cl}_{\beta X}(\upsilon X\setminus X).\]

Let $z$ be in $\beta X\setminus\mathrm{cl}_{\beta X}(\upsilon X\setminus X)$. Let $U$ be open neighborhood of $z$ in $\beta X$ such that
\[\mathrm{cl}_{\beta X}U\cap\mathrm{cl}_{\beta X}(\upsilon X\setminus X)=\emptyset.\]
Let $V=U\cap X$. Note that $\mathrm{cl}_{\beta X}U=\mathrm{cl}_{\beta X}V$, as $U$ is open $\beta X$ and $X$ is dense in $\beta X$. Thus $\mathrm{cl}_XV$ is in ${\mathscr R}$, as it is realcompact by Lemma \ref{B}. Therefore $\mathrm{int}_{\beta X}\mathrm{cl}_{\beta X}V$ is contained in $\lambda_{\mathscr R} X$ by the definition of $\lambda_{\mathscr R} X$. Observe that $z$ is in $\mathrm{int}_{\beta X}\mathrm{cl}_{\beta X}U$, as $z$ is in $U$ and $U$ is open in $\beta X$. In particular, $z$ is in $\mathrm{int}_{\beta X}\mathrm{cl}_{\beta X}V$, and is thus in $\lambda_{\mathscr R} X$.

Next, let $t$ be in $\lambda_{\mathscr R} X$. Then $t$ is in $\mathrm{int}_{\beta X}\mathrm{cl}_{\beta X}A$ for some subset $A$ of $X$ whose closure $\mathrm{cl}_XA$ is in ${\mathscr R}$. The set $\mathrm{cl}_XA$ is then a realcompact subset of $X$ by Lemma \ref{JHFD}. Therefore
\[\mathrm{int}_{\beta X}\mathrm{cl}_{\beta X}A\cap\mathrm{cl}_{\beta X}(\upsilon X\setminus X)=\emptyset\]
by Lemma \ref{JGH}. That is, $\mathrm{int}_{\beta X}\mathrm{cl}_{\beta X}A$ is contained in $\beta X\setminus\mathrm{cl}_{\beta X}(\upsilon X\setminus X)$. Thus $t$ is in $\beta X\setminus\mathrm{cl}_{\beta X}(\upsilon X\setminus X)$.
\end{proof}

Recall that a completely regular space $X$ is called \textit{locally realcompact} if for every $x$ in $X$ there is an open neighborhood $U$ of $x$ in $X$ whose closure $\mathrm{cl}_XU$ is realcompact. For a normal space $X$ (using the representation given for ${\mathscr R}_X$ in Lemma \ref{JHFD}) it is easy to check that $X$ is locally realcompact if and only if it is local modulo the ideal ${\mathscr R}_X$ of $X$. The following now follows from Proposition \ref{JJHGH}.

\begin{proposition}\label{KJGG}
For a normal space $X$ the following are equivalent.
\begin{itemize}
\item[\rm(1)] $X$ is locally realcompact.
\item[\rm(2)] $\mathrm{cl}_{\beta X}(\upsilon X\setminus X)$ is contained in $\beta X\setminus X$.
\end{itemize}
\end{proposition}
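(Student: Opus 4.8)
The plan is to deduce this proposition directly from Proposition \ref{JJHGH}, exactly as the paragraph preceding the statement indicates, by identifying ``locally realcompact'' with ``local modulo the ideal ${\mathscr R}_X$'' and then rewriting the conclusion of Proposition \ref{JJHGH} using the computation of $\lambda_{\mathscr R}X$ already carried out in the proof of Theorem \ref{HGFDF}.

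First I would verify that, for a normal space $X$, local realcompactness coincides with being local modulo ${\mathscr R}_X$. Unwinding Definition \ref{JHGF}, the space $X$ is local modulo ${\mathscr R}_X$ precisely when every point of $X$ has an open neighborhood $U$ in $X$ with $\mathrm{cl}_XU\in{\mathscr R}_X$. By the representation of ${\mathscr R}_X$ given in Lemma \ref{JHFD}, the membership $\mathrm{cl}_XU\in{\mathscr R}_X$ is equivalent to $\mathrm{cl}_X(\mathrm{cl}_XU)=\mathrm{cl}_XU$ being realcompact; hence this is exactly the requirement that $U$ have realcompact closure, which is the definition of local realcompactness. This establishes the equivalence of ``$X$ is locally realcompact'' with ``$X$ is local modulo ${\mathscr R}_X$''.

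Next I would invoke Proposition \ref{JJHGH} with $\mathscr{H}={\mathscr R}_X$, which asserts that $X$ is local modulo ${\mathscr R}_X$ if and only if $\beta X\setminus\lambda_{\mathscr R}X\subseteq\beta X\setminus X$. To convert the right-hand condition into the desired form, I would recall that in the course of proving Theorem \ref{HGFDF} it was shown that
\[\lambda_{\mathscr R}X=\beta X\setminus\mathrm{cl}_{\beta X}(\upsilon X\setminus X),\]
so that $\beta X\setminus\lambda_{\mathscr R}X=\mathrm{cl}_{\beta X}(\upsilon X\setminus X)$. Substituting this identity, the conclusion of Proposition \ref{JJHGH} reads $\mathrm{cl}_{\beta X}(\upsilon X\setminus X)\subseteq\beta X\setminus X$, which is precisely condition (2). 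Chaining the two equivalences then yields the proposition.

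Since every ingredient is already available, there is no genuine obstacle here; the only point demanding care is the first step, where one must be sure that it is Lemma \ref{JHFD} (and hence the normality of $X$) that licenses the identification of ${\mathscr R}_X$ with the collection of all sets having realcompact closure, so that ``local modulo ${\mathscr R}_X$'' really does reduce to local realcompactness rather than to some a priori weaker condition.
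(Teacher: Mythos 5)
Your proposal is correct and follows exactly the paper's route: the paper likewise identifies local realcompactness with being local modulo ${\mathscr R}_X$ via Lemma \ref{JHFD}, applies Proposition \ref{JJHGH}, and (implicitly) uses the identity $\lambda_{\mathscr R}X=\beta X\setminus\mathrm{cl}_{\beta X}(\upsilon X\setminus X)$ established in the proof of Theorem \ref{HGFDF}. You have merely made explicit the substitution step that the paper leaves to the reader.
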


\begin{remark}\label{NJHG}
In \cite{MRW}, J. Mack, M. Rayburn and R.G. Woods have observed that a completely regular space $X$ is locally realcompact if and only if $\upsilon X$ is open in $\beta X$. From this, Proposition \ref{KJGG} follows easily. (See \cite{Ko7}.)
\end{remark}

We next examine how connectedness modulo realcompactness is preserved under a certain class of continuous mappings. We need a few preliminaries first.

Let $X$ and $Y$ be completely regular spaces. A continuous mapping $\phi:X\rightarrow Y$ is said to be \textit{hyper-real} if
\[\phi_\beta(\beta X\setminus\upsilon X)\subseteq\beta Y\setminus\upsilon Y,\]
where $\phi_\beta:\beta X\rightarrow\beta Y$ is the continuous extension of $\phi$. Hyper-real mappings were originally considered by R.L. Blair in his unpublished manuscript \cite{Bl}, where he used them as a tool to
study preservation of realcompactness and inverse preservation of pseudocompactness. It is known that every perfect open continuous surjection between completely regular spaces is hyper-real. (See Corollaries 15.14 and 17.19 of \cite{We}.)

Let $X$ and $Y$ be spaces and let $\phi:X\rightarrow Y$ be a continuous mapping. Let $D$ be a dense subspace of $X$ such that $\phi|_D:D\rightarrow\phi(D)$ is perfect. Then
\[\phi(X\setminus D)\subseteq Y\setminus\phi(D).\]
(See Theorem 1.8 (i) of \cite{PW}.) In particular, if $X$ and $Y$ are completely regular spaces and $\phi:X\rightarrow Y$ is a perfect open continuous surjection, then
\[\phi_\beta(\beta X\setminus X)\subseteq\beta Y\setminus Y,\]
where $\phi_\beta:\beta X\rightarrow\beta Y$ is the continuous extension of $\phi$.

Recall that the Hewitt realcompactification $\upsilon X$ of a completely regular space $X$ may be expressed as the intersection of all cozero-sets of $\beta X$ which contain  $X$, equivalently, $\beta X\setminus\upsilon X$ is the union of all zero-sets of $\beta X$ which miss $X$.

The proof of the next theorem is motivated by the one we have given for Theorem 8.5 of \cite{Kou12}.

\begin{theorem}\label{JJGHF}
Connectedness modulo realcompactness is preserved under perfect open continuous surjections.
\end{theorem}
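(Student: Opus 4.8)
The plan is to reduce the statement to the Stone--\v{C}ech characterization of Theorem \ref{HGFDF} and then transport a connected set along the extended map. Let $\phi\colon X\to Y$ be a perfect open continuous surjection with $X$ connected modulo realcompactness. First I would observe that $Y$ is again normal: since $\phi$ is perfect it is a closed continuous surjection, the closed continuous image of a normal space is normal, and a perfect continuous surjection of a Hausdorff space has Hausdorff image; hence Theorem \ref{HGFDF} applies to both $X$ and $Y$. By that theorem the hypothesis says $\mathrm{cl}_{\beta X}(\upsilon X\setminus X)$ is connected, and the goal becomes to show that $\mathrm{cl}_{\beta Y}(\upsilon Y\setminus Y)$ is connected. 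The device is the continuous extension $\phi_\beta\colon\beta X\to\beta Y$; since $\phi$ is a continuous surjection, $\phi_\beta(\beta X)$ is a compact, hence closed, dense subset of $\beta Y$, so $\phi_\beta$ is surjective.

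The heart of the argument is to establish the two preimage identities
\[\phi_\beta^{-1}(Y)=X\quad\text{and}\quad\phi_\beta^{-1}(\upsilon Y)=\upsilon X.\]
For the first, $X\subseteq\phi_\beta^{-1}(Y)$ because $\phi_\beta|_X=\phi$ maps into $Y$, while the reverse inclusion is exactly the remainder inclusion $\phi_\beta(\beta X\setminus X)\subseteq\beta Y\setminus Y$ recorded in the excerpt for perfect open continuous surjections. For the second, $\upsilon X\subseteq\phi_\beta^{-1}(\upsilon Y)$ follows from the universal property of $\upsilon X$: regarding $\phi$ as a continuous mapping of $X$ into the realcompact space $\upsilon Y$, it extends continuously over $\upsilon X$, and by uniqueness this extension is $\phi_\beta|_{\upsilon X}$, so $\phi_\beta(\upsilon X)\subseteq\upsilon Y$; the reverse inclusion $\phi_\beta^{-1}(\upsilon Y)\subseteq\upsilon X$ is precisely hyper-realness, namely $\phi_\beta(\beta X\setminus\upsilon X)\subseteq\beta Y\setminus\upsilon Y$, which holds because every perfect open continuous surjection is hyper-real.

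Granting the identities, I would compute, using that preimages commute with set difference,
\[\phi_\beta^{-1}(\upsilon Y\setminus Y)=\phi_\beta^{-1}(\upsilon Y)\setminus\phi_\beta^{-1}(Y)=\upsilon X\setminus X,\]
and then, using surjectivity of $\phi_\beta$ in the form $\phi_\beta(\phi_\beta^{-1}(B))=B$,
\[\phi_\beta(\upsilon X\setminus X)=\upsilon Y\setminus Y.\]
Since $\phi_\beta$ is a continuous map from the compact space $\beta X$ into the Hausdorff space $\beta Y$, it is closed, and for a closed continuous map the image of a closure equals the closure of the image; therefore
\[\phi_\beta\big(\mathrm{cl}_{\beta X}(\upsilon X\setminus X)\big)=\mathrm{cl}_{\beta Y}\phi_\beta(\upsilon X\setminus X)=\mathrm{cl}_{\beta Y}(\upsilon Y\setminus Y).\]
Thus $\mathrm{cl}_{\beta Y}(\upsilon Y\setminus Y)$ is a continuous image of the connected set $\mathrm{cl}_{\beta X}(\upsilon X\setminus X)$, hence connected, and by Theorem \ref{HGFDF} the space $Y$ is connected modulo realcompactness.

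I expect the main obstacle to be the two preimage identities, and in particular $\phi_\beta^{-1}(\upsilon Y)\subseteq\upsilon X$: this is the only place where hyper-realness is genuinely used, and one must combine it with the realcompactification extension in the correct direction. The remaining ingredients---normality of $Y$, surjectivity of $\phi_\beta$, and the closed-map identity $\phi_\beta(\mathrm{cl}_{\beta X}(\cdot))=\mathrm{cl}_{\beta Y}\phi_\beta(\cdot)$---are standard, but all of them must be in place before the connectedness transfer can be carried out.
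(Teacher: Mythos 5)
Your proof is correct and follows essentially the same route as the paper's: reduce via Theorem \ref{HGFDF} (after noting that normality passes to $Y$), establish $\phi_\beta^{-1}(Y)=X$ from the remainder inclusion for perfect maps, show $\phi_\beta(\upsilon X\setminus X)=\upsilon Y\setminus Y$ using hyper-realness together with surjectivity of $\phi_\beta$, and transfer connectedness through the closure. The one point of divergence is the inclusion $\phi_\beta(\upsilon X)\subseteq\upsilon Y$ (equivalently, the paper's $\phi_\beta^{-1}(\beta Y\setminus\upsilon Y)\subseteq\beta X\setminus\upsilon X$): you obtain it from the universal property of the Hewitt realcompactification, extending $\phi\colon X\to\upsilon Y$ over $\upsilon X$ and invoking uniqueness of continuous extensions into a Hausdorff space, whereas the paper derives it by pulling zero-sets of $\beta Y$ that miss $Y$ back to zero-sets of $\beta X$ that miss $X$, using the description of $\beta X\setminus\upsilon X$ as the union of all zero-sets of $\beta X$ missing $X$ (and this is where the paper needs $\phi_\beta^{-1}(Y)=X$). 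Your variant is marginally more economical, since $\phi_\beta(\upsilon X)\subseteq\upsilon Y$ in fact holds for an arbitrary continuous $\phi$ and does not depend on $\phi_\beta^{-1}(Y)=X$; both arguments rest on standard facts about $\upsilon$, so the difference is one of bookkeeping rather than substance.
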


\begin{proof}
Let $\phi:X\rightarrow Y$ be a perfect open continuous mapping from a space $X$, which is connected modulo realcompactness, onto a space $Y$. We show that $Y$ is connected modulo realcompactness. Observe that normality is preserved under perfect continuous surjections. Thus, to prove this, by Theorem \ref{HGFDF}, it suffices to prove that $\mathrm{cl}_{\beta Y}(\upsilon Y\setminus Y)$ is connected. We prove the latter by showing that
\[\phi_\beta\big(\mathrm{cl}_{\beta X}(\upsilon X\setminus X)\big)=\mathrm{cl}_{\beta Y}(\upsilon Y\setminus Y),\]
where $\phi_\beta:\beta X\rightarrow\beta Y$ is the continuous extension of $\phi$. Since $\mathrm{cl}_{\beta X}(\upsilon X\setminus X)$ is connected by Theorem \ref{HGFDF}, this will prove the theorem.

First, we show that
\begin{equation}\label{H}
\phi_\beta(X)=Y\quad\text{and}\quad\phi_\beta^{-1}(Y)=X.
\end{equation}
That $\phi_\beta(X)=Y$ is trivial, as $\phi_\beta$ extends $\phi$ and $\phi$ is surjective. That $X$ is contained in $\phi_\beta^{-1}(Y)$ is also trivial, as $\phi_\beta$ extends $\phi$. That $\phi_\beta^{-1}(Y)$ is contained in $X$ follows from the fact that
\[\phi_\beta(\beta X\setminus X)\subseteq\beta Y\setminus Y,\]
as $X$ is dense in $\beta X$ and
\[\phi=\phi_\beta|_X:X\rightarrow \phi_\beta(X)=Y\]
is perfect. This shows (\ref{H}).

Next, we show that
\begin{equation}\label{GKH}
\phi_\beta(\beta X\setminus\upsilon X)=\beta Y\setminus\upsilon Y\quad\text{and}\quad\phi_\beta^{-1}(\beta Y\setminus\upsilon Y)=\beta X\setminus\upsilon X.
\end{equation}
Note that if $Z$ is a zero-set in $\beta Y$ which is disjoint from $Y$, then $\phi_\beta^{-1}(Z)$ is a zero-set in $\beta X$ which is disjoint from $X$, as
\[\phi_\beta^{-1}(Z)\cap X=\phi_\beta^{-1}(Z)\cap \phi_\beta^{-1}(Y)\]
by (\ref{H}). Therefore
\begin{eqnarray*}\label{GLH}
\phi_\beta^{-1}(\beta Y\setminus\upsilon Y)&=&\phi_\beta^{-1}\Big(\bigcup\big\{Z\in\mathrm{Z}(\beta Y):Z\cap Y=\emptyset\big\}\Big)\\&=&\bigcup\big\{\phi_\beta^{-1}(Z):Z\in \mathrm{Z}(\beta Y)\mbox{ and }Z\cap Y=\emptyset\big\}\\&\subseteq&\bigcup\big\{S\in\mathrm{Z}(\beta X):S\cap X=\emptyset\big\}=\beta X\setminus\upsilon X.
\end{eqnarray*}
Thus
\[\phi_\beta\big(\phi_\beta^{-1}(\beta Y\setminus\upsilon Y)\big)\subseteq \phi_\beta(\beta X\setminus\upsilon X).\]
But
\[\beta Y\setminus\upsilon Y=\phi_\beta\big(\phi_\beta^{-1}(\beta Y\setminus\upsilon Y)\big),\]
as $\phi_\beta$ is surjective, since $\phi_\beta(\beta X)$ is a compact subspace of $\beta Y$ which contains $\phi(X)=Y$. Therefore
\[\beta Y\setminus\upsilon Y\subseteq\phi_\beta(\beta X\setminus\upsilon X).\]
On the other hand
\[\phi_\beta(\beta X\setminus\upsilon X)\subseteq\beta Y\setminus\upsilon Y,\]
as $\phi$ is hyper-real, since $\phi$ is perfect open. Therefore
\[\phi_\beta(\beta X\setminus\upsilon X)=\beta Y\setminus\upsilon Y.\]
Also,
\[\beta X\setminus\upsilon X\subseteq \phi_\beta^{-1}\big(\phi_\beta(\beta X\setminus\upsilon X)\big)=\phi_\beta^{-1}(\beta Y\setminus\upsilon Y),\]
which together with the above relations shows that
\[\phi_\beta^{-1}(\beta Y\setminus\upsilon Y)=\beta X\setminus\upsilon X.\]
This proves (\ref{GKH}).

Since $\phi_\beta$ is surjective, it follows from (\ref{H}) and (\ref{GKH}) that
\begin{equation}\label{FF}
\phi_\beta(\upsilon X\setminus X)=\upsilon Y\setminus Y\quad\text{and}\quad\phi_\beta^{-1}(\upsilon Y\setminus Y)=\upsilon X\setminus X.
\end{equation}
Using (\ref{FF}) we have
\[\phi_\beta\big(\mathrm{cl}_{\beta X}(\upsilon X\setminus X)\big)\subseteq\mathrm{cl}_{\beta Y}\phi_\beta(\upsilon X\setminus X)=\mathrm{cl}_{\beta Y}(\upsilon Y\setminus Y),\]
and
\[\mathrm{cl}_{\beta Y}(\upsilon Y\setminus Y)\subseteq\phi_\beta\big(\mathrm{cl}_{\beta X}(\upsilon X\setminus X)\big),\]
as
\[\upsilon Y\setminus Y=\phi_\beta(\upsilon X\setminus X)\subseteq\phi_\beta\big(\mathrm{cl}_{\beta X}(\upsilon X\setminus X)\big),\]
and the latter is compact. Thus
\[\mathrm{cl}_{\beta Y}(\upsilon Y\setminus Y)=\phi_\beta\big(\mathrm{cl}_{\beta X}(\upsilon X\setminus X)\big),\]
as desired.
\end{proof}

Next, we examine how connectedness modulo realcompactness behaves with respect to formation of unions. The following theorem (as well as the proof) is dual to Theorem \ref{KJHGF}.

\begin{theorem}\label{JIHGF}
Let $X$ be a normal space such that
\[X=X_1\cup\cdots\cup X_n,\]
where $X_i$ for each $i=1,\dots,n$ is a closed subspace of $X$ which is connected modulo realcompactness. Suppose that
\[X_1\cap\cdots\cap X_n\]
is non-realcompact. Then $X$ is connected modulo realcompactness.
\end{theorem}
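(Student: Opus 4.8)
The plan is to mirror the proof of Theorem \ref{KJHGF}, replacing the characterization of Theorem \ref{KHG} by that of Theorem \ref{HGFDF} and the remainder $\beta X\setminus\upsilon X$ by the remainder $\upsilon X\setminus X$. Since Theorem \ref{HGFDF} tells us that connectedness modulo realcompactness of a normal space is equivalent to connectedness of $\mathrm{cl}_{\beta X}(\upsilon X\setminus X)$, the whole theorem reduces to showing that $\mathrm{cl}_{\beta X}(\upsilon X\setminus X)$ is connected.

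First I would record, for an arbitrary closed subspace $T$ of $X$, the identity
\[\upsilon T\setminus T=\mathrm{cl}_{\beta X}T\cap(\upsilon X\setminus X).\]
This is the dual of the identity $\beta T\setminus\upsilon T=\mathrm{cl}_{\beta X}T\setminus\upsilon X$ used in the proof of Theorem \ref{KJHGF}. To obtain it, note that $T$ is both $C$-embedded and $C^*$-embedded in $X$ (being closed in the normal space $X$), so by Lemma \ref{DDJD} we have $\beta T=\mathrm{cl}_{\beta X}T$ and $\upsilon T=\mathrm{cl}_{\upsilon X}T=\upsilon X\cap\mathrm{cl}_{\beta X}T$; since $T=X\cap\mathrm{cl}_{\beta X}T$, subtracting yields the claimed identity. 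Taking closures in $\beta T=\mathrm{cl}_{\beta X}T$, which is a closed subset of $\beta X$, then gives
\[\mathrm{cl}_{\beta T}(\upsilon T\setminus T)=\mathrm{cl}_{\beta X}\big(\mathrm{cl}_{\beta X}T\cap(\upsilon X\setminus X)\big),\]
exactly as in the pseudocompact case.

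With this in hand the argument proceeds as before. Setting $A=X_1\cap\cdots\cap X_n$, which is closed in $X$, non-realcompactness of $A$ means $\upsilon A\ne A$, so the set $\mathrm{cl}_{\beta X}A\cap(\upsilon X\setminus X)$ is non-empty by the identity above. For each $i$, Theorem \ref{HGFDF} applied to $X_i$ shows that $\mathrm{cl}_{\beta X_i}(\upsilon X_i\setminus X_i)=\mathrm{cl}_{\beta X}(\mathrm{cl}_{\beta X}X_i\cap(\upsilon X\setminus X))$ is connected, and each of these sets contains the fixed non-empty set $\mathrm{cl}_{\beta X}A\cap(\upsilon X\setminus X)$ because $A\subseteq X_i$. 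A finite union of connected sets with a common non-empty intersection is connected, and commuting the finite union past the closure together with $\bigcup_{i=1}^n\mathrm{cl}_{\beta X}X_i=\mathrm{cl}_{\beta X}X=\beta X$ gives
\[\bigcup_{i=1}^n\mathrm{cl}_{\beta X}\big(\mathrm{cl}_{\beta X}X_i\cap(\upsilon X\setminus X)\big)=\mathrm{cl}_{\beta X}(\upsilon X\setminus X).\]
Hence $\mathrm{cl}_{\beta X}(\upsilon X\setminus X)$ is connected, and Theorem \ref{HGFDF} finishes the proof.

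The only genuinely new computation, and thus the step I expect to require the most care, is the set-theoretic identity for $\upsilon T\setminus T$ and its closure: one must keep careful track of the three ambient spaces $X$, $\upsilon X$, and $\beta X$, use that closed subspaces of a normal space are $C$-embedded in order to invoke Lemma \ref{DDJD}, and observe that $\mathrm{cl}_{\beta X}(\mathrm{cl}_{\beta X}T\cap(\upsilon X\setminus X))$ is already contained in $\mathrm{cl}_{\beta X}T$, so that the intersection with $\mathrm{cl}_{\beta X}T$ coming from computing the closure inside $\beta T$ is harmless. Everything after that is the routine ``union of connected sets through a common point'' argument, identical in form to the proof of Theorem \ref{KJHGF}.
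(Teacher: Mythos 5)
Your proposal is correct and follows essentially the same route as the paper's own proof: the paper likewise dualizes Theorem \ref{KJHGF} by proving, for each closed subspace $T$ of the normal space $X$, the identity $\upsilon T\setminus T=\mathrm{cl}_{\beta X}T\cap(\upsilon X\setminus X)$ via Lemma \ref{DDJD}, and then runs the ``finite union of connected sets through a common non-empty piece'' argument before invoking Theorem \ref{HGFDF}. The one point you flag as delicate --- that the intersection with $\mathrm{cl}_{\beta X}T$ arising from taking closures inside $\beta T$ is harmless --- is handled identically in the paper.
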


\begin{proof}
Let $T$ be a closed subspace of $X$. Observe that
\[\beta T=\mathrm{cl}_{\beta X}T\quad\text{and}\quad\upsilon T=\mathrm{cl}_{\upsilon X}T\]
by Lemma \ref{DDJD}. Therefore
\[\upsilon T\setminus T=\mathrm{cl}_{\upsilon X}T\setminus T=(\upsilon X\cap\mathrm{cl}_{\beta X}T)\setminus T=\mathrm{cl}_{\beta X}T\cap(\upsilon X\setminus X),\]
and thus
\begin{eqnarray*}\label{KGF}
\mathrm{cl}_{\beta T}(\upsilon T\setminus T)&=&\mathrm{cl}_{\mathrm{cl}_{\beta X}T}\big(\mathrm{cl}_{\beta X}T\cap(\upsilon X\setminus X)\big)\\&=&\mathrm{cl}_{\beta X}T\cap\mathrm{cl}_{\beta X}\big(\mathrm{cl}_{\beta X}T\cap(\upsilon X\setminus X)\big)=\mathrm{cl}_{\beta X}\big(\mathrm{cl}_{\beta X}T\cap(\upsilon X\setminus X)\big).
\end{eqnarray*}

Let
\[A=X_1\cap\cdots\cap X_n.\]
Observe that $A$ is closed in $X$ and $\upsilon A\setminus A$, which equals to $\mathrm{cl}_{\beta X}A\cap(\upsilon X\setminus X)$ by the above observation, is non-empty, as $A$ is non-realcompact. Note that $\mathrm{cl}_{\beta X_i}(\upsilon X_i\setminus X_i)$, which equals to
\begin{equation}\label{HGF}
\mathrm{cl}_{\beta X}\big(\mathrm{cl}_{\beta X}X_i\cap(\upsilon X\setminus X)\big)
\end{equation}
by the above observation, is connected for each $i=1,\dots,n$ by Theorem \ref{HGFDF}, as $X_i$ is connected modulo realcompactness. Observe that (\ref{HGF}) contains the non-empty set
\[\mathrm{cl}_{\beta X}A\cap(\upsilon X\setminus X)\]
for each $i=1,\dots,n$. Therefore, the union
\[\bigcup_{i=1}^n\mathrm{cl}_{\beta X}\big(\mathrm{cl}_{\beta X}X_i\cap(\upsilon X\setminus X)\big)\]
is connected. Since
\[\bigcup_{i=1}^n\mathrm{cl}_{\beta X}\big(\mathrm{cl}_{\beta X}X_i\cap(\upsilon X\setminus X)\big)=\mathrm{cl}_{\beta X}\bigg(\bigcup_{i=1}^n\mathrm{cl}_{\beta X}X_i\cap(\upsilon X\setminus X)\bigg)=\mathrm{cl}_{\beta X}(\upsilon X\setminus X),\]
Theorem \ref{HGFDF} implies that $X$ is connected modulo realcompactness.
\end{proof}

The following example shows that the conclusion in the above theorem (and also Theorem \ref{HFDF}) fails if the number of $X_i$'s is infinite.

\begin{example}\label{HGGG}
Let
\[X=A\oplus I,\]
where $A=I=[0,\Omega)$. (Here $\Omega$ is the first uncountable ordinal and $\oplus$ denotes the free union.) Let
\[X_i=A\cup\{i\}\]
for each $i$ in $I$. Then $X$ is a normal space, $X_i$ for each $i$ in $I$ is closed in $X$, $A$ is non-realcompact,
\[X=\bigcup_{i\in I}X_i\quad\text{and}\quad A=\bigcap_{i\in I}X_i.\]
We verify that $X_i$ for each $i$ in $I$ is connected modulo realcompactness, while $X$ itself is not.

Note that $\beta A=[0,\Omega]$. Thus $\upsilon A=[0,\Omega]$, as $\upsilon A\neq A$, since $A$ is not realcompact. Therefore $\upsilon A\setminus A=\{\Omega\}$. Let $i$ be in $I$. Note that $\upsilon X_i=\mathrm{cl}_{\upsilon X}X_i$ by Lemma \ref{DDJD}, as $X_i$ is $C$-embedded in $X$, since $X_i$ is closed in $X$ and $X$ is normal. Similarly, $\upsilon A=\mathrm{cl}_{\upsilon X}A$. Thus
\[\upsilon X_i\setminus X_i=\mathrm{cl}_{\upsilon X}X_i\setminus X_i=\mathrm{cl}_{\upsilon X}\big(A\cup\{i\}\big)\setminus\big(A\cup\{i\}\big)=\mathrm{cl}_{\upsilon X}A\setminus A=\upsilon A\setminus A=\{\Omega\}.\]
Therefore
\[\mathrm{cl}_{\beta X_i}(\upsilon X_i\setminus X_i)=\{\Omega\}\]
is connected. Note that $X_i$ is normal, as it is closed in $X$ and $X$ is so. Thus $X_i$ is connected modulo realcompactness by Theorem \ref{HGFDF}. To conclude the proof, observe that
\[\upsilon X\setminus X=(\mathrm{cl}_{\upsilon X}A\cup\mathrm{cl}_{\upsilon X} I)\setminus X=(\mathrm{cl}_{\upsilon X}A\setminus X)\cup(\mathrm{cl}_{\upsilon X} I\setminus X)=(\upsilon A\setminus A)\cup(\upsilon I\setminus I)\]
is a two point set, and thus so is its closure $\mathrm{cl}_{\beta X}(\upsilon X\setminus X)$. Therefore $\mathrm{cl}_{\beta X}(\upsilon X\setminus X)$ is not connected. Again, Theorem \ref{HGFDF} implies that $X$ is not connected modulo realcompactness.

The above example may also be used to show that the conclusion in Theorem \ref{HFDF} fails if the number of $X_i$'s is infinite. For this purpose, note that by an argument similar to the one in the proof of Lemma \ref{KLJK} (using Lemma \ref{JHFD} and observing that $A$ and $X_i$ are closed in $X$) we have
\[{\mathscr R}_X|_A={\mathscr R}_A\quad\text{and}\quad{\mathscr R}_X|_{X_i}={\mathscr R}_{X_i}\]
for each $i$ in $I$. Thus, in particular, $X_i$ for each $i$ in $I$ is connected modulo ${\mathscr R}_X|_{X_i}$, as it is connected modulo ${\mathscr R}_{X_i}$, and $A$ is connected modulo ${\mathscr R}_X|_A$, as it is connected modulo ${\mathscr R}_A$. Also, $A$ is not in ${\mathscr R}_X$ by Lemma \ref{JHFD}, as it is closed in $X$ and is non-realcompact.
\end{example}

The following example shows that a space with a dense subspace which is connected modulo realcompactness may fail to be connected modulo realcompactness.

\begin{example}\label{FD}
Let $X$ be the space defined in Example \ref{HGGG}. Let $D$ be the subspace of $X$ consisting of all isolated points of $X$. Then $D$ is dense in $X$ trivially, and is connected modulo realcompactness, while $X$ itself is not connected modulo realcompactness, as it is shown in Example \ref{HGGG}. (That $D$ is connected modulo realcompactness follows from the facts that $D$ is realcompact, as it is a discrete space of non-measurable cardinality, and by Theorem \ref{HGFDF} every realcompact normal space is connected modulo realcompactness.)
\end{example}

The following example shows that the product of two spaces each connected modulo realcompactness may not be connected modulo realcompactness.

\begin{example}\label{FFD}
Let $X$ be the space defined in Example \ref{HGGG}. Observe that $X$ is homeomorphic to the product space
\[[0,\Omega)\times\{0,1\}.\]
While $X$ itself is not connected modulo realcompactness by Example \ref{HGGG}, both spaces $[0,\Omega)$ and $\{0,1\}$ are connected modulo realcompactness. (That $[0,\Omega)$ is connected modulo realcompactness has been observed in Example \ref{HGGG}.)
\end{example}

\section*{Acknowledgements}

The author would like to thank the referee for his/her careful reading of the manuscript and his/her comments.

\end{document}